\newcommand{\ee}{\epsilon}
\newcommand{\kzero}{{\zeta}}
\newcommand{\fddn}{\xRightarrow[ N]{f.d.d}}
\DeclareMathOperator*{\argmin}{argmin}
 \def\botcaption#1#2{\medskip\centerline{{\scshape #1.}\kern8pt
 {\rm #2}}\bigskip}
 \newcounter{enunciato}[subsection]
 \newtheorem{ittheorem}{Theorem}
 \newtheorem{itlemma}{Lemma}
 \newtheorem{itproposition}{Proposition}
 \newtheorem{itdefinition}{Definition}
 \newtheorem{itremark}{Remark}
 \newtheorem{itclaim}{Claim}
 \newenvironment{theorem}{\addtocounter{enunciato}{1}
 \begin{ittheorem}}{\end{ittheorem}}
 \newenvironment{lemma}{\addtocounter{enunciato}{1}
 \begin{itlemma}}{\end{itlemma}}
 \newenvironment{proposition}{\addtocounter{enunciato}{1}
 \begin{itproposition}}{\end{itproposition}}
 \newenvironment{definition}{\addtocounter{enunciato}{1}
 \begin{itdefinition}}{\end{itdefinition}}
 \newenvironment{remark}{\addtocounter{enunciato}{1}
 \begin{itremark}}{\end{itremark}}
 \newenvironment{claim}{\addtocounter{enunciato}{1}
 \begin{itclaim}}{\end{itclaim}}
 \newcommand{\bl}[1]{\begin{lemma}\label{#1}}
 \newcommand{\el}{\end{lemma}}
 \newcommand{\br}[1]{\begin{remark}\label{#1}}
 \newcommand{\er}{\end{remark}}
 \newcommand{\bt}[1]{\begin{theorem}\label{#1}}
 \newcommand{\et}{\end{theorem}}
 \newcommand{\bd}[1]{\begin{definition}\label{#1}}
 \newcommand{\ed}{\end{definition}}
 \newcommand{\bcl}[1]{\begin{claim}\label{#1}}
 \newcommand{\ecl}{\end{claim}}
 \newcommand{\bp}[1]{\begin{proposition}\label{#1}}
 \newcommand{\ep}{\end{proposition}}
 \newcommand{\bc}[1]{\begin{corollary}\label{#1}}
 \newcommand{\ec}{\end{corollary}}
 \newcommand{\bpr}{\begin{proof}}
 \newcommand{\eprz}{\end{proof}}
 \newcommand{\bi}{\begin{itemize}}
 \newcommand{\ei}{\end{itemize}}
 \newcommand{\ben}{\begin{enumerate}}
 \newcommand{\een}{\end{enumerate}}
 \newcommand{\x}{{\bf x}}
 \newcommand{\y}{{\bf y}}
 \newcommand{\s}{{\bf s}}
 \def\botcaption#1#2{\medskip\centerline{{\scshape #1.}\kern8pt
 {\rm #2}}\bigskip}
 \def \ba {\begin{array}}
 \def \ea {\end{array}}
 \def \Z {{\mathbb Z}}
 \def \R {{\mathbb R}}
 \def \N {{\mathbb N}}
 \def \E {{\mathbb E}}
 \def \cH {{\mathcal H}}
 \def \cA {{\mathcal A}}
 \def \cB {{\mathcal B}}
 \def \cF {{\mathcal F}}
 \def \cC {{\mathcal C}}
 \def \cS {{\mathcal S}}
 \def \ind {{1}}
 \def \gep {{\varepsilon}}
\begin{document}

\title{ Non-local random deposition models for earthquakes and propagation in some random media}

\author{
    Philippe Carmona$^{1}$, François Pétrélis$^{2}$ and Nicolas Pétrélis$^{1}$ \\
    {\small $^{1}$Laboratoire de Mathématiques Jean Leray UMR 6629, Université de Nantes, 2 Rue de la Houssinière, BP 92208, F-44322 Nantes Cedex 03, France} \\
    {\small \texttt{philippe.carmona@univ-nantes.fr, nicolas.petrelis@univ-nantes.fr}} \\
    {\small $^{2}$Laboratoire de Physique de l'Ecole Normale Supérieure, 24 rue Lhomond, 75005 Paris, France} \\
    {\small \texttt{petrelis@phys.ens.fr}}}

%
%



%
%



\maketitle

 \abstract{
 In the present paper, we investigate a new class of  non-local random deposition models, initially introduced by physicists to investigate the field of  mechanical constraints (stress) applied along a line or on a given area 
 located in a seismic zone. The non-local features are twofold. First, the falling objects have random and heavy-tailed dimensions.
Second, the locations where the objects are falling are at least for some of the models that we consider, depending on the shape of the surface before the deposition. 
Let us be more specific, we consider $(h_N)_{N\in \N}$ a sequence of random  $(d+1)$-dimensional surfaces defined on $[0,D]^d$ for $d\in \{1,2\}$. 
The process $h_{N}$ is obtained by adding to $h_{N-1}$ an object  
$$\s\in [0,D]^d \mapsto Z_{N}^{\alpha-d} \psi\Big(\frac{v_{Y_N}(\s)}{Z_N}\Big),$$
 where $Z=(Z_i)_{i\in \N}$ is an  i.i.d. sequence of Pareto random variables with parameter $\beta>0$  introduced to tune the horizontal size of the falling object, where $\alpha>0$ provides the links between the width and the height of the object, where  $\psi:[0,\infty)\mapsto \mathbb{R}^+$  determines the global shape of the object, where $v_{\y}(\x)$ is the distance between $\x$ and $\y$ on the torus and where $Y=(Y_i)_{i\in \N}$ are random variables in $[0,D]^d$ that settle the deposition location of each falling object. 
 
 In the present paper we focus our attention on three variations of this model.
 First, the rand-model which corresponds to $Y$ being a sequence of i.i.d. random variables distributed uniformly on $[0,D]^d$. 
Then,  the min-model, that introduces an important property of the physics of earthquakes but is also more difficult to tract to the extent that a strong correlation appears 
 between the $N$-th falling object and the shape of the profile $h_{N-1}$. Finally we also consider another variant of the rand-model, called the stellar model, which describes the energy absorption of a propagating field by random objects. It models for instance the intensity of the microwaves  emitted by stellar clouds 
and measured   at the Earth surface. 

For those three models, our results identify the limit in law of $(h_N)_{N\in \N}$ viewed as a sequence of continuous random functions rescaled properly.  We also determine the limit in law of the fluctuations of $(h_N)_{N\in \N}$.

   }

{\bf }
\medskip

 \tableofcontents



\section{Outline of the paper}
In Section \ref{Intro} we present rigorously the heavy-tailed random deposition model (referred to as rand model) that we are going to study all along the paper. 
Subsequently, with Sections \ref{absorb} (resp. \ref{stellar} and \ref{earthquakemod}) we explain how this model and two of its variants, i.e., the stellar model and the min model, are  good 
matches to study the absorption of energy of a wave propagating in a non homogeneous medium (resp.,  the emission of radiations by interstellar clouds, resp. the 
stress field in a seismic zone). 
Note that with Section \ref{rela}, we briefly recall why the non-local features of our models make them different from the deposition models that have been investigated in the past.
Section \ref{Resultat} is dedicated to the main results of the paper. With Theorems \ref{speedfront} and \ref{speedfrontvariant} we provide the limiting distribution of the sequence of random functions 
considered for each of the three models. With Theorems \ref{fluctconver} and \ref{fluctconverstel}, in turn, we provide the asymptotics of their fluctuations (except for the min model).
Note that for both the random functions and their fluctuations, depending on the parameters of the model, we observe different regimes separated by critical values 
that are calculated explicitly.  At the end of section \ref{Resultat}, in Section \ref{perspec} we give some open problems and conjectures mostly related to the fluctuations of the min model. With Section \ref{tool-for-distr-conv}, we provide some tools to prove the convergence in distributions of random functions. Those tools will be used subsequently to prove Theorems \ref{speedfront} and \ref{speedfrontvariant} in Section \ref{Preuves1} and to prove  Theorems \ref{fluctconver} and \ref{fluctconverstel} in Sections \ref{Preuves} and \ref{Preuvesstel}.  


\section{Introduction: }\label{Intro}

Objects that vary greatly in size, and whose distribution can be modelled by a power law, frequently occur in nature.

This is the case of aerosols in the atmosphere \cite{JUNGE1955,BOUCHER2015}, or interstellar clouds in space \cite{ELMEGREEN1996}. In both cases, these objects are formed by collision and fragmentation processes, which can be described by Schmoluchowski-type equations \cite{WHITE1982,KRAPIVSKI2010}. Note that the power-law behavior of  the object size distributions is  determined by the shape of the reaction rates in these equations.

Earthquakes also have certain power-law properties. At the boundary between two tectonic plates, slow mantle movements accumulate stresses that are suddenly released during an earthquake by the fast movement of a zone at the plate boundary \cite{SCHOLZ2019}. The surface of the moving zone fluctuates, and its distribution is known to be a power-law \cite{KANAMORI2004,KAWAMURA2012}. Recent works have proposed an explanation for the origin of these distributions, based on the self-similar nature of the force field at the plate boundary. This provides an explanation for statistical properties of earthquakes such as the Gutenberg-Richter law for the released energy or the Omori-Utsu law for the number of aftershocks \cite{PETRELIS2023,PETRELIS2024}. 

Several situations of interest for the objects introduced above can be described as sums over sets of objects. As we describe in sections 1.2 and 1.3 below,  this is the case, for example, with the emission of microwaves by interstellar clouds, or for the absorption by aerosols of radiation which rays propagate in a straight line. In the case of earthquakes, the stress field results from the accumulation of stresses created by all past events, and can therefore be modeled as the summation of stress variations caused by previous events, see section 1.4.

This summation over sets of objects is akin to considering a deposition process, the particularity of which is that object sizes are power-law distributed. This process  forms an interface corresponding to different physical observables, depending on the problem under consideration.
In the case of microwaves emitted by the interstellar medium  or in the case of absorption by aerosols,  these are the spatial variations in radiation intensity when measured on Earth in different directions towards space.
For earthquakes, it is the stress field measured as a function of position in the plane separating the two sides of a fault.

We will now describe in more detail how to model these different deposition processes. Variants are considered according to the distribution of object positions and to the scalar or vector nature of the objects.

We call  rand-model the one where object positions are sampled uniformly at random and objects are scalar. The stellar model corresponds to vector objects. Finally, the min-model considers that objects are deposited at the minimum of the interface, so at a  position that depends on the  whole shape of the interface. This last, highly non-local problem is the most complicated, but introduces an effect that is important for earthquake modeling.


\subsection{Heavy-tailed random deposition  model}\label{defrandmod}

We consider a sequence of real functions (called profiles)  $h:=(h_n)_{n\in \N\cup\{0\}}$ defined on $[0,D]^d$
which are obtained recursively through a sequence of random transformations.  
In order to define rigorously these transformations, several
mathematical objects are  required:

\begin{enumerate}
\item \emph{Shape of the profile transformation}\label{def:hng}
Given an integer $n\in\N$, $n\ge 1$  we let $\cH^n$ be  the space of functions
$\psi:[0,+\infty[\to[0,+\infty[$ such that
\begin{itemize}
  \item $\psi(0)=1$ and $\psi(x)=0$ for $x\ge 1$.
  \item $\psi$ has an order $n-1$ derivative on
  $[0,1]$ which is Lipschitz, that is there exists 
   $C\in(0,\infty)$ such that
  \begin{equation}\label{eq:psihol}
    \valabs{\psi^{(n-1)}(x) -\psi^{(n-1)}(y)} \le C \valabs{x-y}\qquad
    (x,y \in [0,1]).
  \end{equation}
  \item the function $\psi$ is n times differentiable at $0$,
  $\psi^{(n)}(0) \neq 0$ and  $\psi^{(k)}(0)=0$ for $1\le k\le n-1$.
\end{itemize}
For example the function $\psi(x)=(1-x)^3 \un{0\le x\le 1}$ is in
$\cH^3$.

We let $\cH^0$ be the space of functions $\psi:[0,\infty)\mapsto [0,\infty)$ continuous such that $\psi(0)=1$ and 
$\psi(x)=0$ for $x\in [1,\infty)$.

\item The \emph{width} on which a profile transformation occurs is \emph{heavy tailed}. Therefore, we let  $Z:=(Z_i)_{i\geq 1}$ be an i.i.d sequence of random variables defined on a probability space $(\Omega,\cA,P)$ and such that 
$Z_1$ is Pareto distributed with parameter $\beta-1$ ($\beta>1$), i.e., with density $\ind_{[1,\infty)}(u) (\beta-1)/u^\beta$, $u\in \R$.
\item The \emph{center} of a profile transformation is picked uniformly on $[0,D]^d$. Thus, we let  $Y:=(Y_i)_{i\geq 1}$ be 
an i.i.d. sequence of random variables defined as well on  $(\Omega,\cA,P)$, independent of $Z$ such that $Y_1$ is \emph{uniformly distributed on $[0,D]^d$}.  
\item We will use periodic boundary conditions and consequently, for $\y \in \R^d$, we define $v_{\y}:\R^d\mapsto[0,dD]$ 
which indicates the distance between a given $\s\in \R^d$ and the lattice $\y+\Z^d D$, i.e., 
\begin{align}\label{defaltv}
v_{\y}(\s)&:=\min\{|\s-\y+j D|_1, \ j\in \Z^d\}\\
\nonumber &=\text{dist}(\s,\y+\Z^d D), \quad \s\in \R^d,
\end{align}
where we have use the 1-norm on $\R^d$, i.e., $|x-y|_1:=|x_1-y_1|+\dots+|x_d-y_d|$.
We note that $v_{\y}(\s)=v_{\s}(\y)$ and we can draw the graph of $v_{\y}(\cdot)$ in dimension $d=1$ on Figure \ref{fig-rho*def}.  

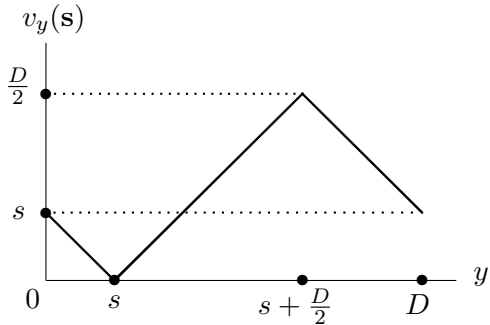
\begin{figure}\label{graphvy}
\begin{center}
\setlength{\unitlength}{0.45cm}
\begin{picture}(8,8)(0,1)
\put(0,0){\line(9,0){12}}
\put(0,0){\line(0,7){7}}
{\thicklines
\qbezier[30](0,5.5)(4,5.5)(7.5,5.5)
\qbezier[40](0,2)(5,2)(10.7,2)
}
{\thicklines
\qbezier(0,2)(1,1)(2,0)
\qbezier(2,0)(3.5,1.5)(7.5,5.5)
\qbezier(7.5,5.5)(9.5,3.5)(11,2)

}
\put(-.6,-.8){$0$}
\put(1.8,-.8){$s$}
\put(0,2){\circle*{.35}}
\put(2,0){\circle*{.35}}
\put(7.5,0){\circle*{.35}}
\put(11,0){\circle*{.35}}

\put(0,5.5){\circle*{.35}}
\put(12.5,0){$y$}
\put(-.7,7.5){$v_y(\s)$}
\put(-1.2,5.5){$\frac{D}{2}$}
\put(-1,1.8){$s$}
\put(10.5,-1){$D$}
\put(6.2,-1){$s+\tfrac{D}{2}$}
\end{picture} 
\vspace{1cm}
\end{center}
\caption{In dimension $d=1$: graph of  $y\mapsto v_y(s)$ avec $s\in [0,\frac{D}{2}]$}
\label{fig-rho*def}  
\end{figure}

\end{enumerate}

\medskip

With these tools, for $i\in \N$,  we can define rigorously the $i$-th profile transformation as
\begin{equation}\label{elemprotrans}
X_i(\x):=Z_{i}^{\alpha-d} \psi \Big[\frac{v_{Y_{i}}(\x)}{Z_{i}}\Big], \quad \x\in [0,D]^d.
\end{equation}
where $\alpha\in (0,\infty)$ is a parameter which value is set by the physics of the  considered model.

By summing out the $N$ first profile transformations in \eqref{elemprotrans}, we obtain the sequence of profiles $h:=(h_N)_{N=0}^{\infty}$ as
\begin{align}\label{defrand}
h_0(\x)&=0, \quad \x\in [0,D]^d,\\
\nonumber h_{N+1}({\bf x})&=h_{N}(\x)+X_{N+1}(\x),\quad   N\in \N_0,\  \x\in [0,D]^d.
\end{align}
All along this paper, we will study the asymptotics of $h=(h_N)_{N=0}^\infty$ as the number of profile transformation diverges ($N\to \infty$).
Thus, depending on the values taken by the parameters $\alpha,\beta, n$ and on the dimension $d$, we will investigate   the limit in distribution of $(h_N)_{N\in \N}$ seen as sequences of real functions on $[0,D]^d$ (properly rescaled).
\smallskip  

We will  also  try to determine the asymptotics of  $f:=(f_N)_{N=0}^\infty$ which is the sequence of profile fluctuations. Thus,  for $N\to \infty$, we consider  $f_N:[0,D]^d\mapsto \R$ defined as   $f_N(\x):=h_N(\x)-h_N(0)$, i.e., 
\begin{align}\label{deffn}
\nonumber f_{N+1}(\x)&=f_{N}(\x)+X_N(\x)-X_N(0)\\
&=f_{N}(\x)+ Z_{N+1}^{\alpha-d} \  \bigg[ \psi\Big(\frac{v_{Y_{N+1}}(\x)}{Z_{N+1}}\Big)-\  \psi\Big(\frac{v_{Y_{N+1}}(0)}{Z_{N+1}}\Big)\bigg], \quad \x\in  [0,D]^d.
\end{align}



%
%

\subsection{Physical motivations for the rand-model: energy propagation through absorbing objects.} 
\label{absorb}

	When a wave propagates in a medium that contains absorbing objects, its energy decreases after every encounter with such an absorbing object.

First, we assume that the wave propagates in a straight line perpendicular to a window $[0,D]^d$ ($d=1$ or $d=2$). Second, we assume that the decrease of energy at each encounter is proportional to the width of the absorbing object that is crossed by the straight line and third that the positions of the objects are sampled uniformly at random, then the opposite of the variation of energy  
is described by the rand-model, as illustrated in Figure \ref{figsup1}.

\begin{remark}\rm The standard law for the absorption of energy is that of an exponential decay with rate proportional to a coefficient $a$ multiplied by the length of propagation $l$. If the effect is small, we can develop the exponential as  $e^{(-a l)}\simeq 1-a l$, and we recover an effect proportional to the length $l$. If the effect is not small, then we consider the log of the energy instead which also changes by $-a l$ after propagation over a distance $l$.
\end{remark}

This process provides for instance a simple model for the spatial fluctuations of the electromagnetic energy as measured at Earth surface when the electromagnetic radiations have propagated through the atmosphere and interacted with aerosols.

\begin{center}
\begin{figure}[htb!]
\includegraphics[width=10cm]{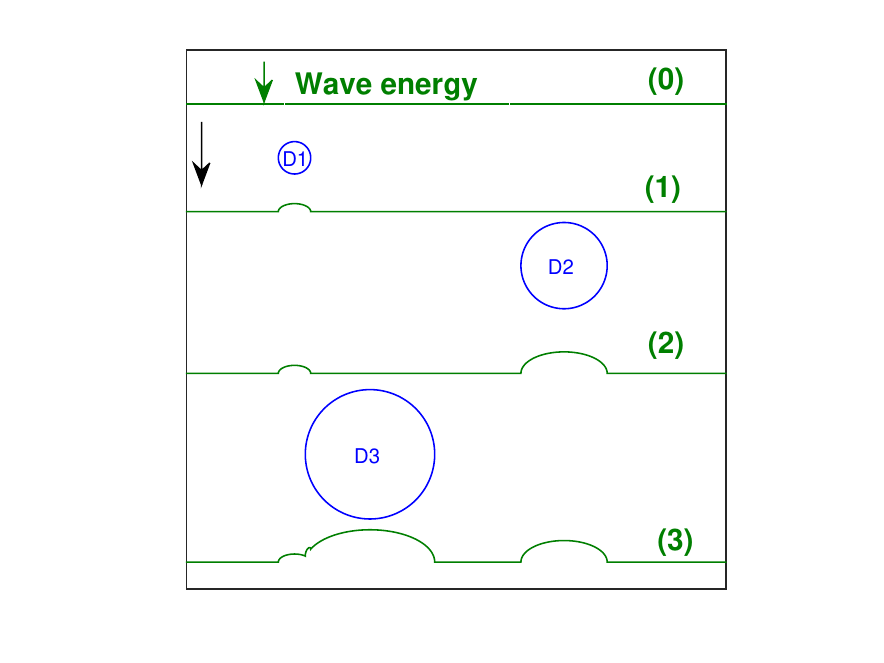}
\caption{Schematics of {\bf minus} the variation of the energy of a wave that propagates in  straight line from the top to the bottom parallel to the black arrow. The wave energy is displayed as a green line and consists 
in any electromagnetic radiation assumed to propagate in a straight line and absorbed by aerosols in the atmosphere.  It is decreased  when the ray  interact with an aerosol. The absorption by aerosol is assumed to be proportional to their width and independent of the energy, as expected when their effect is small.  The energy after the $(i)$-th encounter is displayed for $i=0$ to $3$. Note that the process is quite generic and this figure describes also the emission of any quantity that propagates in straight line and encounters objects that  are additive sources of amplitude proportional to their width.} 
\label{figsup1}
\end{figure}
\end{center}

\subsection{The stellar-model} \label{stellar}

A dual situation concerns a medium in which objects radiate energy. This is the case for instance of interstellar clouds in space that radiate microwaves.
The intensity of a cloud radiation is proportional to the 
width of the cloud. To adapt our model to the present phenomenon we recall \eqref{defrand} and we choose $d=2$.
The sequence of random variables  $Z=(Z_i)_{i=1}^\infty$ provides the size of interstellar clouds indexed in $\N$. The observations show that the size of the clouds varies greatly and their distribution  is considered to be a power-law as discussed in \cite{ELMEGREEN1996}.  The second sequence of random variables  $Y=(Y_i)_{i=1}^\infty$ provides the location of the center of each cloud. Finally, the shape of a cloud being an ellipsoid and the signal measured at the Earth surface being proportional to the width of the ellipsoid projected on the plane normal to the line of sight, we set 
$\psi(u)=(1-u^2)\ind_{[0,1]}(u)$ for $u\in \R$.

The statistical characterization of this interstellar emission is of prime importance to experiments searching the signature of primordial gravitational waves in the cosmic microwave background (CMB) polarization.
These waves are polarized due to the presence of a large scale magnetic field of scale comparable or larger than the one of the cloud. Each cloud thus emits a microwave radiation aligned with a direction that varies from one cloud to another. We assume that these directions are uncorrelated. At the earth surface, measurements are sensitive either to the total energy or to the total field. The former is described by the rand-model whereas the later can be defined as follows. This is the stellar-model. 

We let  $\cS_{2}$ be the unitary sphere of dimension $3$, i.e., 
\begin{equation}\label{spher}
\cS_{2}:=\{(x_1,x_2,x_3)\colon\, x_1^2+x_2^2+x_3^2=1\}.
\end{equation}
We let $(\widetilde \Theta_i)_{i=1}^\infty$ be an i.i.d. sequence of random variables taking their values on $S_2$ and we denote by $\theta_0:=E(\widetilde \Theta_1)$. Thus, it suffices to set $\Theta_i:=\widetilde \Theta_i-\theta_0$ for $i\in \N$
to obtain an i.i.d. sequence of bounded centered random variables $(\Theta_i)_{i\in \N}$.
Then, the radiation that is measured at $\x\in \R^2$ is  $\theta_0 h_N( \x)+h^{\text{st}}_N(\x)$ where 
\begin{align}\label{defrandmoddim3}
h^{\text{st}}_N({\bf x})&:=\sum_{j=1}^N X_j(\x)\, \Theta_j, \quad \x\in [0,D]^d.
\end{align}

\subsection{Physical motivations in the context of earthquakes. The min-model.} \label{earthquakemod}

We consider an area in the vicinity of a subduction zone. Two tectonic plates are moving with respect to each other, raising progressively the mechanical constraints
applied between them. Assuming the boundary between the plates to be linear ($d=1$) or planar ($d=2$),  we consider a set $[0,D]^d$ and for every $t\geq 0$ a function $\tilde h_t:[0,D]^d\mapsto \R$ called profile at time $t$. This latter function records the intensity of the force 
applied at each point in  $[0,D]^d$ and at time $t$.  We denote by $(T_n)_{n\geq 0}$ the non-decreasing sequence recording the times at which the  earthquakes take place successively ($T_0=0$ by convention).  
Between two consecutive earthquakes, the motions of both plates increase the stress profile linearly in time 
and uniformly on $[0,D]^d$ with speed $C>0$. When an earthquake takes place, the constraints in $[0,D]^d$ are 
partially relaxed so that $\tilde h$ decreases discontinuously at $T_N$.
Thus, for $N\in \N$, we write for $\x \in [0,D]^d$
\begin{align}\label{evoltempfieconst}
\tilde h_{T_N+t}(\x)&=\tilde h_{T_N}(\x)+C\,t, \quad t\in [0,T_{N+1}-T_N)\\
\nonumber \tilde h_{T_{N+1}}(\x)&=\lim_{t\to T_{N+1}^-} \tilde h_{t}(\x)-\phi_{N+1}(\x),
\end{align} 
where $\phi_{N+1}:[0,D]^d\mapsto \R^+$ is a random function that describes the effect of the $(N+1)$-th earthquake on the profile.

In the present paper, we will only consider the evolution of the profile due to earthquakes. Thus, we get rid of the linear growth of the profile
induced by the constant $C$ in \eqref{evoltempfieconst} and only consider the profile immediately after each earthquake.
Moreover, we prefer working with positive modifications of the profile at earthquakes
and therefore we will consider $(h_N)_{N\geq 0}$ defined, for $n\geq 0$, as 
$$
h_N(\x)=-(\tilde h_{T_N}(\x)-C\, T_N), \quad \quad \x \in [0,D]^d,$$
rather than $\tilde h$. 
In other words,
we will consider a sequence $(h_N)_{N\geq 0} $ of random continuous functions on $[0,D]^d$ satisfying $h_0=0_{[0,D]^d}$ and for $N\geq 0$
\begin{equation}\label{defhcons}
h_{N+1}(\x)=h_N(\x)+\phi_{N+1}(\x),\quad \x\in [0,D]^d.
\end{equation}

We observe that the rand-model is a particular case of  \eqref{defhcons} for which (recall \eqref{elemprotrans}) the $N$-th profile transformation  
is given by 
\begin{equation}
\phi_{N}(\x):=X_N(\x), \quad \x\in [0,D]^d,
\end{equation}
so that the sequence of profile $h$ is the one introduced in \eqref{defrand} above.
To shed some lights on the choices made in (\ref{eq:psihol}--\ref{defrand}), we observe first that the characteristic length of an earthquake is observed by geophysicists  to be a heavy-tailed random variable. That is the reason why the sequence  $Z$ is chosen to be Pareto distributed and we note that $\beta$ is expected to be close to $3$ (see e.g. \cite{HK04} or \cite{KHHTK12}). The sequence $Y$ provides the centers of each 
earthquakes taken into account. In the rand-model, it is chosen to be i.i.d. and $Y_1$ follows the uniform distribution 
on $[0,D]^d$.  Finally, the function $\psi$ allows us to take into account the fact that the force modification due to an earthquake varies in space and decays as one gets away from its  epicenter.

\smallskip

%
\begin{remark} \rm
Note that, in the present paper, we will not consider the law of the inter-arrival times $(T_{N+1}-T_N)_{n\geq 1}$ between consecutive earthquakes. For the rand-model,
it seems reasonable to assume that $(T_{N+1}-T_N)_{n\geq 1}$ is an i.i.d sequence of random variables. The situation is much more involved for the min-model (introduced below)
and even though this is not the object of the present paper,  determining the law of $T_{N+1}-T_N$ for a given $n$ is a very interesting question. 
\end{remark}

We describe now a modification of the rand-model for which a  transformation is centered at the minimum of the 
profile. The physical relevance of such a modification comes from the belief  that an earthquake starts when the 
mechanical constraint applied at the boundary between plates reaches a threshold at a given point. We assume that between earthquakes, all the system experiences a linear in time force increase, and that the threshold of an earthquake initiation is the same at all points.  This yields that 
for $N\geq 0$ the $N+1$-th profile transformation is centered at one of the minimums of $h_N$.  This is the min-model. 


 Since the set of minimums is not necessarily reduced to a singleton, we
consider $U:=(U_i)_{i\geq 1}$ an i.i.d. sequence of random variables, independent of $Z$ and following the uniform law on $[0,D]^d$. Then, the center of the $N$-th earthquake is the 
closest point of $U_N$ in $[0,D]^2$  at which the minimum of $h_{N-1}$ is attained, i.e., 
\begin{equation}\label{centerm}
M_N:=\argmin\{v_{U_N}(\x)\colon h_{N-1}(\x)=\min_{[0,D]^d} h_{N-1}\}.
\end{equation}
Thus, for $N\geq 1$ 
\begin{equation}\label{defminmod}
h_{N+1}^{\text{m}}(\x)=h_N^{\text{m}}(\x)+  Z_{N+1}^{\alpha-d}\   \psi\Big(\frac{v_{M_{N+1}}(\x)}{Z_{N+1}}\Big),\quad \x\in [0,D]^d.
\end{equation}
With Figure \ref{fig:both_figuresconst}, we display an example of construction of $h_4$ and $h_4^{\text{m}}$ with the same values of transformation widths $Z_1,Z_2,Z_3,Z_4$.

\begin{remark} \rm
With the min-model, we take into account the situation where an earthquake is initiated as soon as the profile $\tilde h$ (recall \eqref{evoltempfieconst}) reaches a critical threshold $K>0$. Thus, 
\begin{equation}\label{deftn}
T_{N+1}=\inf \Big\{t>T_N\colon \max_{[0,D]^d} \tilde h_t=K \Big\}.
\end{equation}
However the growth of $\tilde h$ being constant and uniform on $[T_N,T_{N+1})$, the point at which the $(N+1)$-th earthquake will start is already 
known at time $T_N$ (that is after the profile transformation of the $N$-th earthquake has been applied). This is the reason why we drop the 
constant loading between two consecutive earthquakes and focus on the transformations of the profile induced by the earthquakes since they 
are sufficient to define the center-point of each earthquake.  
\end{remark}

\begin{figure}[h]
    \centering
    \begin{subfigure}[b]{0.45\textwidth}
        \centering
        \includegraphics[width=\textwidth]{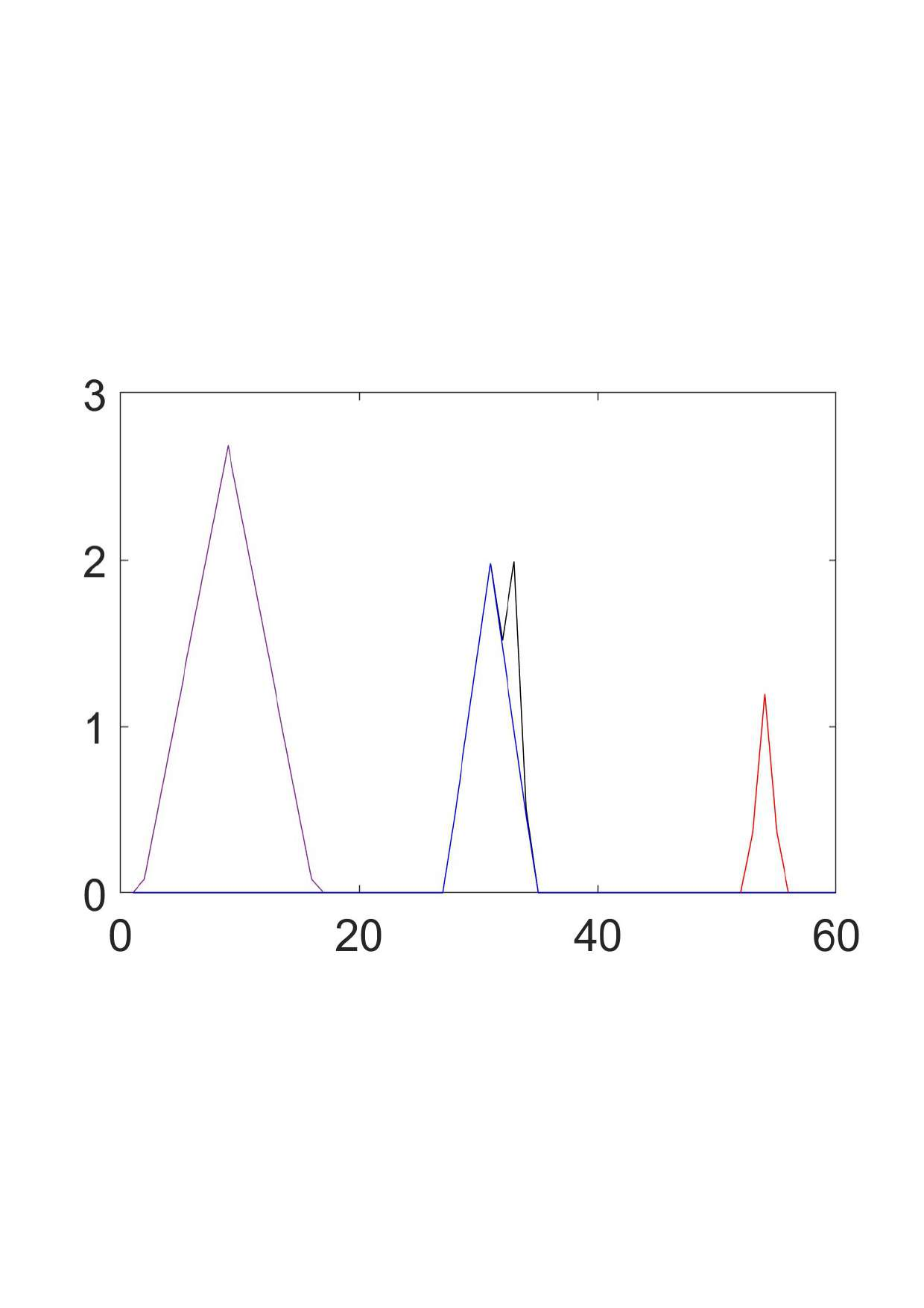}
        \vspace{-2cm}
        \label{fig:figure1}
    \end{subfigure}
    \hfil
    \begin{subfigure}[b]{0.45\textwidth}
        \centering
        \includegraphics[width=\textwidth]{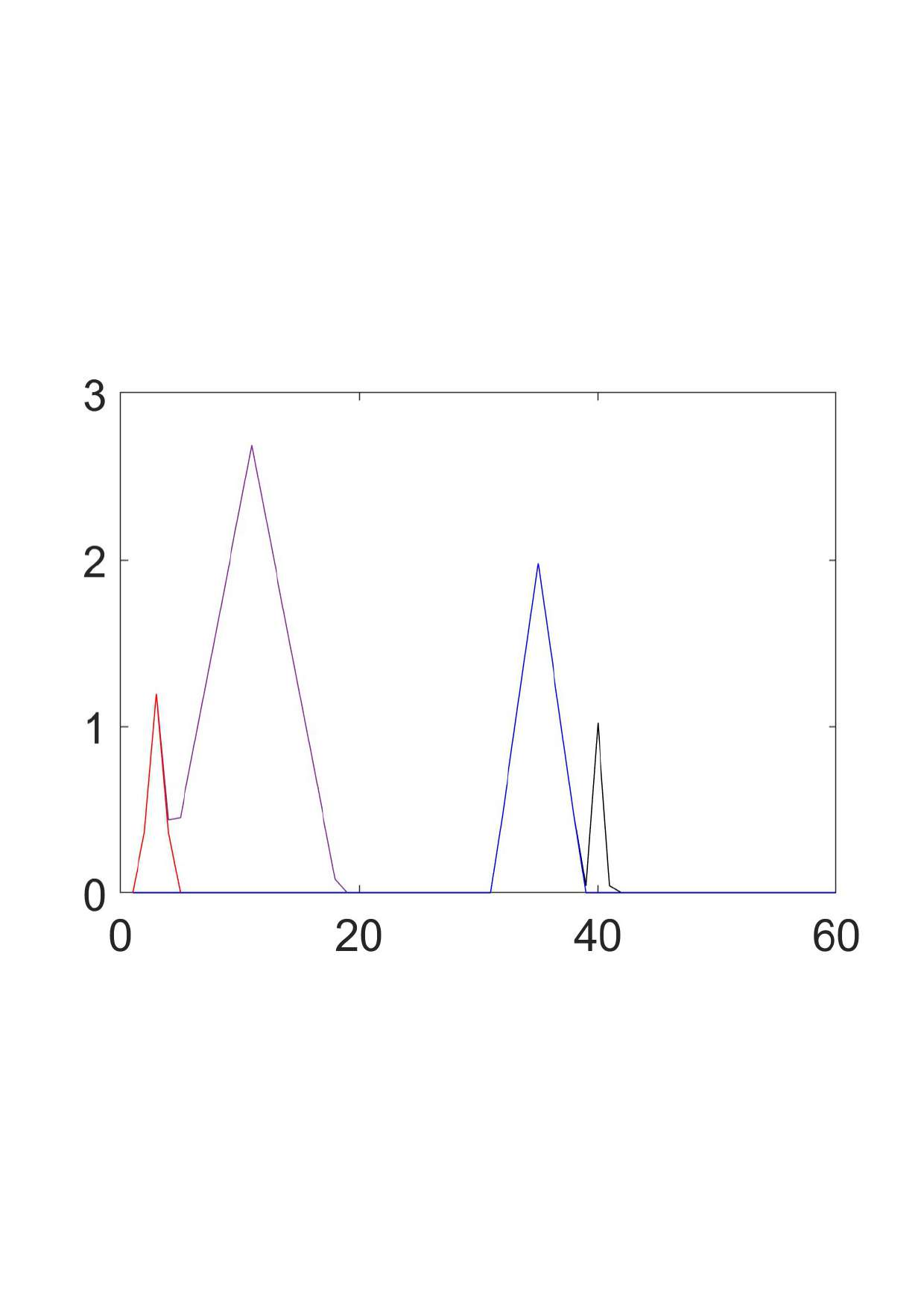}
         \vspace{-2cm}
        \label{fig:figure2}
    \end{subfigure}
    \caption{In dimension $d=1$  and for $D=60$, sampling of the rand model after $4$ transformations (i.e. $h_4$) on the left  and of the min model after $4$ transformations (i.e., $h_4^{\text{min}}$) on the right.  Note that we have used  the triangle function $\psi(x)=(1-x) \, \ind_{[0,1]}(x)$ and
   the same random variables  $Z_1,Z_2,Z_3,Z_4$ for both figures.  }
    \label{fig:both_figuresconst}
\end{figure}


\subsection{Relationship  to some different random deposition models}\label{rela}

\begin{enumerate} 
  \item The (BD) ballistic deposition model (see  \citet{Sep2000},
    \citet{Pen2008},  \citet{ComDalSag2023}). On each size $x \in \Z$
  rectangular blocks fall down at random with rate $1$, independently
  of other sites. Falling blocks have each width $1$ and their own
  random height, where the heights corresponding to different blocks
  are IID. This model is a Markov process on the space $\R^\Z$. It is
  a \emph{local} model, when we exclude overhangs : the rate of attaching new particles depends
  only on the states of some finite number of neighboring sites.
  \item The discrete polynuclear growth (PNG) is a local growth model
  where the height above $x$ at time $t$ satisfies
  \begin{equation}
    h(x,t+1) = \max\etp{h(x-1,t),h(x,t),h(x+1,t)} + \omega(x,t+1)
  \end{equation}
  with $\omega(x,t)$ IID. When the distribution of the $\omega(x,t)$
  is geometric with special constraints, the interface grows linearly
  with time and the transversal fluctuations
  are shown to be of the order $t^{2/3}$ (see \citet{Joh2003})
\end{enumerate}


\section{Results}\label{Resultat}

Theorem \ref{speedfront} below is concerned with the growth speed of the rand-model $h=(h_N)_{N=0}^\infty$.
It turns out that depending on $\alpha$ and $\beta$ the profile growth speed  is either ballistic or 
super-ballistic. Subsequently, we will state Theorem \ref{speedfrontvariant} which is the counterpart of Theorem \ref{speedfront} for the min-model and the stellar-model. Note that, in the super-ballistic regime, 
the min model displays the same profile growth speed and the same limit as the rand model. In the ballistic regime, in turn, we are able to prove the tightness of $(h_N^{\text{m}}/N)_{N\in \N}$ only and not its finite dimensional convergence. 
The fact that another sequence of centered random variables enters the definition of  the  stellar-model modifies the 
profile growth speed only at criticality.

The profile fluctuations are considered for the rand model in Theorem \ref{fluctconver} and for the stellar model in Theorem \ref{convstel}. We have much less results for the fluctuations of the min model (see Proposition \ref{tightkappalar1}). 
%
%
%
%
%
\subsection{Notations for distributional convergences}
We recall that  for $(\x,\y)\in \R^d$ the $1$-norm $|\x-\y|_1:=|\x_1-\y_1|+\dots+|\x_d-\y_d|$.
We will consider two types of convergence modes for the random functions that we consider in the present paper.  First the convergence for finite dimensional distributions. 
To that aim we let $(L_N)_{N\geq 1}$ a sequence of random functions from $(\Omega,\cA,P)$ to $\big((\R^k)^{[0,D]^d}, \text{Bor}\,(\R^k)^{\otimes [0,D]^d}\big)$.
We let $\mu$ be a probability on $\big((\R^k)^{[0,D]^d}, \text{Bor}\,(\R^k)^{\otimes [0,D]^d}\big)$. and $L_\infty$ a random function of law $\mu$.
\begin{definition}
The sequence $(L_N)_{N\in \N}$ converges for finite dimensional distributions towards $\mu$ if for every $j\in \N$ and every $(\s_1,\dots,\s_j)\in ([0,D]^d)^j$,  the sequence of real random vectors
$(L_N(\x_1),\dots,L_N(\x_j))_{N\in \N}$ converges in distribution
towards $(L_\infty(\x_1),\dots,L_\infty(\x_j))$. Then, we note

\begin{equation}\label{convfinitedim}
L_N \fddn \mu.
\end{equation}
This notation will be extended to 
\begin{equation}\label{limlawalt}
L_N\fddn  L_\infty 
\end{equation}
when the convergence occurs towards the law of $L_\infty$ a random continuous function.
\end{definition}

The second mode of convergence takes into account the continuity of the random functions. 
We denote by $C_d^k$ the set of real continuous functions defined on  $[0,D]^d$ and taking values in $\R^k$. The set $C_d^k$ is endowed with the 
uniform convergence norm denoted by $||\cdot||_{\infty}$ and we let $\cC_d^k$ be the associated Borel $\sigma$-algebra.
We let  $\mu$ be a probability law on $(C_d^k,\cC_d^k)$ and we let $(L_N)_{N\geq 1}$ be a sequence of random continuous functions from $(\Omega,\cA,P)$ to $(C_d^k,\cC_d^k)$.
\begin{definition}\label{limlawcont}
The sequence $(L_N)_{N\in \N}$ converges in law towards $\mu$ if for every \newline $F: (C_d^k,||\cdot||_{\infty})\mapsto (\R, |\cdot|\, )$ continuous and bounded, it holds true that
$$\lim_{N\to \infty} E\big[F(L_N)\big]=\int_{C_d^k} F(f) \ \mu (df).$$
Then, we note
\begin{equation}\label{limlaw}
L_N\Rightarrow_{N} \mu.
\end{equation} 
 This notation will be extended to 
\begin{equation}\label{limlaw}
L_N\Rightarrow_{N} L_\infty 
\end{equation}
when the convergence occurs towards the law of $L_\infty$ a random continuous function.

\end{definition}

For the sake of simplicity, we will drop the index $k$ from all notations when we consider real random functions, that is when $k=1$.

\subsection{Profile growth speed and fluctuations for the rand-model}\label{grwthspeed}

We recall (\ref{eq:psihol}--\ref{defrand}) and on the same probability space $(\Omega,\cA,P)$, we define   $\xi:=(\xi_{i})_{i\geq 1}$ an i.i.d. sequence of random variables following an exponential law of parameter $1$. We also set  $T_i:=\xi_1+\dots+\xi_i$ for $i\in \mathbb{N}$.
For $B$ a random vector, we let $B \, 1_{[0,D]^d}$ be the random function that equals $B(\omega)$ on $[0,D]^d$.

Those notations allow us to state our main Theorems for the rand-model. We note that, in Theorem \ref{speedfront}, the limiting processes only depend on the three parameters $\alpha,\beta$ and $d$. Moreover, the continuity of $\psi$ on $[0,1]$ is sufficient to perform the proof.

Two exponents will be of particular importance in the next two theorems, i.e.,
\begin{equation}\label{defkzeroandkappa}
\kzero=\frac{\alpha -d}{\beta -1} \quad \quad \text{and} \quad \quad   \kappa=\frac{\alpha-n-d}{\beta-1},
\end{equation}
since they provide the exponent of the growth speed of the profile  (see Theorem \ref{speedfront} below) and 
 of the fluctuations of the profile (see Theorem \ref{fluctconver}) when the tail of the $Z$ random variables is very large. 
   We will see that both $\kzero$ and $\kappa$
appear as well in the counterparts theorems for the min model and the stellar model.

With Figure \ref{phasediag}, we provide the phase diagram of the rand model. We distinguish between $4$ areas of interest depending on the relative positions of $\kzero$ and $\kappa$ with respect to $1$ and $1/2$ respectively.


\begin{theorem}\label{speedfront}
 Assume $\psi \in \cH^0$.
For the next two cases,
the limits are constant random variables, whose respective values are
also constant functions on $[0,D]^d$ . 
 
\begin{enumerate}
\item If $\kzero>1$, then
\begin{equation}\label{convgN}
N^{-\kzero} h_N \Rightarrow_{N} \Bigg[\sum_{i=1}^\infty \frac{1}{T_i^{\kzero}}\Bigg] \ \ind_{[0,D]^d}.
\end{equation}

\item If $\kzero=1$, then
\begin{equation}\label{convgN3}
(N \log(N))^{-1} h_N \Rightarrow_{N} \ind_{[0,D]^d}.
\end{equation}

\item If $\kzero<1$, then
\begin{equation}\label{convgN2}
N^{-1} h_N \Rightarrow_{N} \gamma_{\alpha,\beta,D} \,   \ind_{[0,D]^d}.\\
\end{equation}

An analytic expression of $\gamma_{\alpha,\beta,D}$ is
$$
\gamma_{\alpha,\beta,D}:=\esp{\psi\Big(\frac{v_{Y_1}(0)}{Z_1}\Big) Z_1^{\alpha
    -d}} =\,  \frac{\beta-1}{D^d} \int_{[1,\infty)\times [0,D]^d} 
z^{\alpha-d-\beta} \ \psi\Big(\frac{v_{\y}(0)}{z}\Big) \, dz\, dy.
$$

%

\end{enumerate}
\end{theorem}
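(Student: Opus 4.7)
The strategy is to reduce to the analysis of the simpler sum $S_N := \sum_{i=1}^N Z_i^{\alpha-d}$ (obtained by setting $\psi\equiv 1$), using that $\psi(0)=1$ and that $\psi$ is continuous at $0$ to absorb the $\psi$-factor for the large values of $Z_i$. The main computational input is the Pareto order-statistic representation: since $U_i := Z_i^{-(\beta-1)}$ is uniform on $(0,1]$, the order statistics $Z_{(1)}\ge \cdots \ge Z_{(N)}$ satisfy the distributional identity
$$
\bigl(Z_{(i)}^{\beta-1}\bigr)_{1\le i\le N} \stackrel{(d)}{=} \Bigl(\frac{T_{N+1}}{T_i}\Bigr)_{1\le i\le N},
$$
so that $N^{-\kzero}S_N\stackrel{(d)}{=}(T_{N+1}/N)^{\kzero}\sum_{i=1}^N T_i^{-\kzero}$; moreover, the classical weak convergence of the top rescaled order statistics to the inverse Poisson point process gives, for every fixed $K$, $(N^{-1/(\beta-1)}Z_{(i)})_{1\le i\le K} \Rightarrow (T_i^{-1/(\beta-1)})_{1\le i\le K}$. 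In all three cases the limit is a (random) constant function on $[0,D]^d$, so convergence in law as $C_d$-valued random functions (Definition \ref{limlawcont}) reduces to uniform convergence in probability of the rescaled $h_N$ to its limit; uniformity in $\x$ will follow either from the tools of Section \ref{tool-for-distr-conv}, or from the observation that large $Z_i$ produce near-constant profile transformations on the torus.

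Case 3 ($\kzero<1$): the expectation $\gamma(\x):=\E[Z_1^{\alpha-d}\psi(v_{Y_1}(\x)/Z_1)]$ is finite, and by translation invariance on the torus of the uniform law of $Y_1$ it does not depend on $\x$; a change of variables yields the claimed closed form of $\gamma_{\alpha,\beta,D}$. The classical SLLN then gives $N^{-1}h_N(\x)\to\gamma_{\alpha,\beta,D}$ almost surely, and this is upgraded to uniform convergence in $\x$ via a tightness / equicontinuity estimate in $(C_d,||\cdot||_\infty)$ supplied by Section \ref{tool-for-distr-conv}. Case 1 ($\kzero>1$): truncate at the top $K$ indices and write $h_N(\x)=\sum_{i=1}^K Z_{(i)}^{\alpha-d}\psi(v_{Y_{(i)}}(\x)/Z_{(i)}) + R_{N,K}(\x)$. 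For each fixed $i$, the joint convergence above and the continuous mapping theorem give $N^{-\kzero}Z_{(i)}^{\alpha-d}\to T_i^{-\kzero}$ in distribution; furthermore, since $Z_{(i)}\to\infty$ and $v_{Y_{(i)}}(\cdot)$ is bounded by $dD/2$, the continuity of $\psi$ at $0$ yields $\sup_{\x}|\psi(v_{Y_{(i)}}(\x)/Z_{(i)})-1|\to 0$ in probability. Hence the top-$K$ block converges in sup-norm to $\bigl(\sum_{i=1}^K T_i^{-\kzero}\bigr)\ind_{[0,D]^d}$. The remainder is bounded uniformly in $\x$ by $||\psi||_\infty N^{-\kzero}\sum_{i>K}Z_{(i)}^{\alpha-d}\stackrel{(d)}{=}||\psi||_\infty(T_{N+1}/N)^{\kzero}\sum_{K<i\le N}T_i^{-\kzero}$, which, since $T_i\sim i$ and $\kzero>1$, converges to $||\psi||_\infty\sum_{i>K}T_i^{-\kzero}$, vanishing as $K\to\infty$. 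A diagonal argument concludes.

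The critical case $\kzero=1$ is the main obstacle, since $\E[Z_1^{\alpha-d}]=\infty$ only barely. The plan is to truncate at $M_N := N^{1/(\beta-1)}$ (the natural scale of the maximum of $Z_1,\ldots,Z_N$) and split $h_N=h_N^{\le}+h_N^{>}$ accordingly. A direct computation gives
$$
\E\Bigl[Z_1^{\alpha-d}\psi\bigl(v_{Y_1}(\x)/Z_1\bigr)\ind_{\{Z_1\le M_N\}}\Bigr] = (\beta-1)\!\int_1^{M_N}\!\!\frac{1}{z}\,\frac{1}{D^d}\!\int_{[0,D]^d}\!\psi\bigl(v_y(\x)/z\bigr)\,dy\,dz,
$$
which, since the inner integral tends to $1$ as $z\to\infty$ uniformly in $\x$, equals $(\beta-1)\log M_N + O(1) = \log N + O(1)$ uniformly in $\x$. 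A second-moment estimate on the truncated triangular array (the variance of $h_N^{\le}(\x)$ being at most $N\,\E[Z_1^{2(\alpha-d)}\ind_{\{Z_1\le M_N\}}]=O(N^2)$) yields $(N\log N)^{-1}h_N^{\le}(\x)\to 1$ uniformly in $\x$. For the overflow part, the number of indices with $Z_i>M_N$ is $\mathrm{Binomial}(N,1/N)$, of constant order in probability, and the corresponding contributions are controlled by $||\psi||_\infty$ times $\sum_{i:Z_i>M_N}Z_i^{\alpha-d}$, which via the order-statistic representation equals $O_P(N)=o(N\log N)$. The delicate points are obtaining uniformity in $\x$ for the truncated LLN (where the tools of Section \ref{tool-for-distr-conv} are essential) and verifying that the $O(1)$ remainder in the expectation does not corrupt the logarithmic leading behavior after the sum over $N$ indices.
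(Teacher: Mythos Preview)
Your proposal is correct. For $\kzero<1$ it coincides with the paper's argument. For $\kzero>1$ the difference is organizational: you use a fixed-$K$ truncation followed by a Billingsley ``convergence together'' step, whereas the paper couples via the full order-statistic identity $Z^{(i)}\stackrel{(d)}{=}(T_{N+1}/T_i)^{1/(\beta-1)}$ from the outset, splits at the \emph{growing} level $N/\log N$, and obtains almost-sure uniform convergence of $\tilde h_N$ to $\sum_i T_i^{-\kzero}$ in a single pass, with no two-level limit. The genuine divergence is at $\kzero=1$: the paper simply reuses the same coupling and the same $N/\log N$ split, invoking only the elementary asymptotic $\sum_{i=1}^k T_i^{-1}=(\log k)(1+o(1))$, so that the critical case becomes a two-line modification of the supercritical one; your truncation at $M_N=N^{1/(\beta-1)}$ with mean/variance estimates is the classical domain-of-attraction route, which is more explicit about where the $\log N$ arises but requires a separate tightness argument for uniformity in $\x$ (the paper's uniform-continuity bound on $G(z,u)=\psi(u/z)$, adapted to the $(N\log N)^{-1}$ normalization, would serve here). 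The paper's approach buys a unified treatment of the two heavy-tail regimes $\kzero\ge 1$; yours buys a more elementary and self-contained moment computation at criticality.
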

\smallskip


\begin{remark}
{\rm
  When $\kzero>1$, for fixed $s$, $h_N(s)$ is a sum of
  i.i.d. random variables with heavy tail, therefore classical limit
    theorems imply the convergence to a stable random variable of
    index  $\unsur{\kzero}$.  What Theorem
    \ref{speedfront} implies  is that we have joint convergence,
    for different times $s_1, s_2, \ldots s_k$ of the random variables
    $(h_n(s_i))_{i\le k}$ to a $k$-uple $(A,...,A)$ of the same random
    variable $A= \sum_{i\ge 1} T_i^{- \kzero}$. Indeed, we prove the convergence in distribution of the
    whole process.

The limiting random variable $A$ has  characteristic function 
\begin{equation}\label{carcfunlim}
\Phi(t)=\esp{e^{it A}}=\exp\Big[-\Gamma\big(1-\tfrac{1}{\kzero}\big)\,  |t|^{\unsur{\kzero}}\, \big(\cos(\tfrac{\pi}{2 \kzero})-\, i \, \text{sign}(t) \, \sin(\tfrac{\pi}{2\, \kzero})\big)\Big].
\end{equation}
The fact that the series $\sum_{i\geq 1} T_i^{-\kzero}$ is a stable $\frac{1}{\kzero}$ random variable is well known in the probabilist folklore and we refer to \cite[Theorem 16.25]{K13} for a description of the stable laws and  to \cite[Equation (1.1.6) and Theorem 1.4.5]{SamTaq94} for the characteristic function in \eqref{carcfunlim}. 
}

\end{remark}

\begin{remark}[Heuristics of the critical point]
\rm If $\kzero<1$,  the random variable $Z_1^{\alpha-d}$ is integrable, which allows us to apply the law of large number to 
$(h_N(\x))_{N\in \N}$ for every $\x\in [0,D]$. This explains why $(h_N)_{N\in \N}$ has to be renormalized by $N$ to converge in distribution 
towards a non-trivial limit.
\end{remark}

The next theorem is concerned with the growth speed of the fluctuations of the front around its 
value at $0$. In other words we study the limit in distribution of the sequences of random processes 
$(f_N)_{N\geq 1}$ defined in \eqref{deffn}. We consider $d\in \{1,2\}$ since these are the physically relevant
dimensions in our case.

%

\begin{theorem}\label{fluctconver}
  Assume $\psi \in \cH^n$ and pick $d\in \N$.
  

\begin{enumerate}\label{mainres}
\item If $\kappa > \undemi$ and $d\in \{1,2\}$, then
\begin{equation}\label{premconv}
N^{-\kappa} f_N \Rightarrow_{N} \mu_d,
\end{equation}
where $\mu_d$ is a probability law on $(C_d,\cC_d)$ that will be defined properly in Section \ref{descrmu} below.


%

\item 
If $d=1$ and $\kappa < \undemi$, then
\begin{equation}\label{secconv}
N^{-\frac{1}{2}} f_N \Rightarrow_N Y
\end{equation}
where $Y$ is a centered Gaussian process of covariance 
$r(s,t)=\text{Cov}\,(X_1(s),X_1(t))$.

\item If $d\in\ens{1,2}$ and $\kappa = \undemi$ then
\begin{equation}
  (N \log N)^{-\frac{1}{2}} f_N \Rightarrow_N Y
\end{equation}
where $Y$ is a centered Gaussian field of covariance
\begin{equation}
  r(s,x)=\text{Cov}\,(A(s),A(x))\quad\text{with } A(s) =\frac{\psi^{(n)}(0)}{n!}(
  v_U(s)^n -v_U(0)^n)\,,
\end{equation}
and $U$ is a uniform on $[0,D]^d$.
\item 
If $d=2$ and $ \kappa<1/2$, then 
\begin{equation}\label{terconv}
N^{-\frac{1}{2}} f_N  \fddn Y,
\end{equation}
where $Y$ is a centered Gaussian field of covariance 
$r(\s,\x)=\text{Cov}\,(X_1(\s),X_1(\x))$.

\item 
If $d=2$ and $\kappa<1/4$, then 
\begin{equation}\label{terconv}
N^{-\frac{1}{2}} f_N \Rightarrow_N Y,
\end{equation}
where $Y$ is a centered Gaussian field of covariance 
$r(\s,\x)=\text{Cov}\,(X_1(\s),X_1(\x))$.


\end{enumerate}

\vspace{1cm}

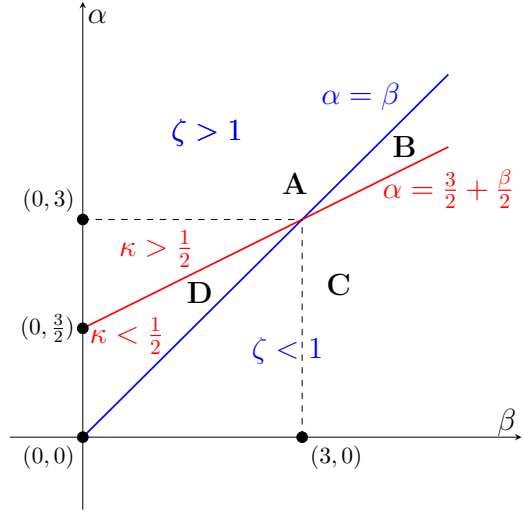
\begin{figure}[h]
    \centering
    \begin{tikzpicture}[scale=0.8]
        \begin{axis}[
            axis lines=middle,
            xmin=-1, xmax=6,
            ymin=-1, ymax=6,
            xtick=\empty,
            ytick=\empty,
            width=10cm,
            height=10cm,
            domain=0:5,
            samples=100,
            axis equal image
        ]
        \addplot[blue, thick] {x};
        \node[blue, below] at (axis cs:3.8,5) {\Large $\alpha = \beta$};

        \addplot[red, thick] {1.5 + 0.5*x};
        \node[red, below] at (axis cs:5,3.8) {\Large $\alpha = \frac{3}{2} + \frac{\beta}{2}$};

        \addplot[only marks, mark=*, mark size=2.5pt, black] coordinates {(0,0)};
        \node[below left] at (axis cs:0,0) {$(0,0)$};

        \addplot[only marks, mark=*, mark size=2.5pt, black] coordinates {(0,3)};
        \node[above left] at (axis cs:0,3) {$(0,3)$};

        \addplot[only marks, mark=*, mark size=2.5pt, black] coordinates {(3,0)};
        \node[below right] at (axis cs:3,0) {$(3,0)$};

        \addplot[only marks, mark=*, mark size=2.5pt, black] coordinates {(0,1.5)};
        \node[left] at (axis cs:0,1.5) {$(0,\frac{3}{2})$};

        \draw[dashed] (axis cs:0,3) -- (axis cs:3,3);
        \draw[dashed] (axis cs:3,0) -- (axis cs:3,3);

        \node at (axis cs:2.9,3.5) {\Large \textbf{A}};
        \node at (axis cs:0.2,5.8) {\Large $\alpha$};
        \node at (axis cs:5.8,0.2) {\Large $\beta$};
        \node at (axis cs:1.7,4.2) {{\color{blue} \Large $\kzero>1$}};
         \node at (axis cs:1.7,4.2) {{\color{blue} \Large $\kzero>1$}};
          \node at (axis cs:1,2.6) {{\color{red} \Large $\kappa>\frac{1}{2}$}};
           \node at (axis cs:.6,1.4) {{\color{red} \Large $\kappa<\frac{1}{2}$}};
           \node at (axis cs:2.8,1.2) {{\color{blue} \Large $\kzero<1$}};
        \node at (axis cs:4.4,4) {\Large \textbf{B}};
        \node at (axis cs:3.5,2.1) {\Large \textbf{C}};
        \node at (axis cs:1.6,2) {\Large \textbf{D}};
        \end{axis}
    \end{tikzpicture}
      \caption{Phase diagram of the rand model in dimension $d=1$ for a function $\psi \in  \cH^1$.
     Note that in zones A and D, the profile growth is super-ballistic whereas it is simply ballistic in zones B and C. The fluctuations, in turn are non Gaussian 
     in zones A and B whereas they are Gaussian in zones C and D.}
     \label{phasediag}
\end{figure}

\vspace{0.5cm}

\end{theorem}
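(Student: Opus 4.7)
The plan is to exploit that $f_N = \sum_{i=1}^N \Delta_i$ is a sum of i.i.d.\ random continuous functions, where $\Delta_i(\x) := X_i(\x) - X_i(0)$. The first move is to identify the effective heavy tail. Since $\psi^{(k)}(0)=0$ for $1\le k\le n-1$ and $\psi^{(n-1)}$ is Lipschitz, Taylor's theorem gives, when $v_{Y_1}(\cdot)/Z_1\le 1$,
\[
\Delta_1(\x) = \frac{\psi^{(n)}(0)}{n!}\, Z_1^{\alpha-d-n}\bigl[v_{Y_1}(\x)^n - v_{Y_1}(0)^n\bigr] + Z_1^{\alpha-d-n-1}\, R_1(\x),
\]
with $R_1$ uniformly bounded. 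The governing tail variable is thus $Z_1^{\alpha-d-n}$, which has tail index $1/\kappa$, so $\mathrm{Var}\,\Delta_1(\x)<\infty$ exactly when $\kappa<1/2$. This dichotomy drives the three regimes of the theorem.

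\textbf{Stable regime $\kappa>1/2$.} For fixed $\x_1,\dots,\x_k$, the vectors $(\Delta_i(\x_j))_{j\le k}$ are i.i.d.\ with regularly-varying tail of index $1/\kappa<2$, so the classical generalized CLT yields convergence of $N^{-\kappa}\sum_i \Delta_i$ to a stable random vector. The explicit limit is obtained via the Poisson order-statistics representation: the rescaled decreasing rearrangement $(Z_{(i)}/N^{1/(\beta-1)})$ converges jointly in law to $(T_i^{-1/(\beta-1)})$, while the associated centers remain i.i.d.\ uniform on $[0,D]^d$ and independent. Plugging this into the Taylor expansion identifies $\mu_d$ as the law of $\x \mapsto \tfrac{\psi^{(n)}(0)}{n!}\sum_{i\ge 1} T_i^{-\kappa}\bigl[v_{\widetilde Y_i}(\x)^n - v_{\widetilde Y_i}(0)^n\bigr]$ (with centering when $\kappa\le 1$). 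Lifting f.d.d.\ convergence to convergence in $C_d$ would use the abstract framework of Section \ref{tool-for-distr-conv}: truncate the sum to its $M$ largest atoms (a finite sum of continuous functions) and control the residual uniformly in $\x$ via the $1$-Lipschitz continuity of $v_{\y}(\cdot)$ together with a tail estimate on the smaller atoms.

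\textbf{Gaussian regime $\kappa\le 1/2$.} For $\kappa<1/2$ the variance of $\Delta_1(\x)$ is finite and the multivariate CLT directly gives f.d.d.\ convergence of $N^{-1/2}f_N$ to the centered Gaussian field with covariance $r(\s,\x)=\text{Cov}(X_1(\s),X_1(\x))$. Functional tightness follows from Kolmogorov--Chentsov: in $d=1$, the second-moment bound $E[(f_N(\x)-f_N(\y))^2]\le C|\x-\y|$, which comes from the Lipschitz continuity of $v$ and of $\psi^{(n-1)}$, suffices. In $d=2$ one instead needs a modulus exponent strictly larger than $2$, hence a bound on $E[(f_N(\x)-f_N(\y))^4]$; this fourth moment is finite iff $E[Z_1^{4(\alpha-d-n)}]<\infty$, i.e.\ $\kappa<1/4$, which is exactly the gap between statements (4) and (5). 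At the critical value $\kappa=1/2$, I would truncate $Z_i$ at level $N^{1/(\beta-1)}$: the truncated summands have variance growing like $\log N$, concentrated on the leading Taylor term $A(\x)=\tfrac{\psi^{(n)}(0)}{n!}(v_U(\x)^n-v_U(0)^n)$, so a Lindeberg--Feller argument after normalization by $\sqrt{N\log N}$ produces the Gaussian limit with covariance $\text{Cov}(A(\s),A(\x))$. The discarded tail amounts to $O(1)$ atoms of size $\sim N^{1/2}$ that vanish once divided by $\sqrt{N\log N}$; tightness then proceeds as before because the truncated variable has all moments.

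\textbf{Main obstacle.} The hardest step is the functional (not merely f.d.d.) convergence in the stable regime: the limit is driven by a point process of large atoms and $f_N(\x)-f_N(\y)$ has infinite moments in the relevant range, so Kolmogorov--Chentsov is unavailable. One must instead rely on a truncation-plus-coupling argument, uniform in $\x$, tailored to the Poisson representation of the atoms — which is precisely the purpose of the abstract tools developed in Section \ref{tool-for-distr-conv}.
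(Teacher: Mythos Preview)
Your three-regime decomposition and the identification of $Z_1^{\alpha-d-n}$ as the effective tail variable are exactly on target, and in the Gaussian regime your outline matches the paper's (up to a slip: the second-moment bound in $d=1$ is $E[(f_N(x)-f_N(y))^2]\le C|x-y|^2$, not $|x-y|$; exponent $1$ would not exceed $d=1$ and Kolmogorov would fail).

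Your main misconception is in the stable regime. You write that ``Kolmogorov--Chentsov is unavailable'' because $f_N(\x)-f_N(\y)$ has infinite moments, and that one must fall back on a uniform coupling. The paper's device is precisely to make Kolmogorov with $p=4$ available again. After passing to the order-statistics representation $Z^{(i)}=(T_{N+1}/T_i)^{1/(\beta-1)}$, the $i$-th term of $N^{-\kappa}f_N^+$ becomes $K_i(\s)/T_i^{\kappa}$, where $K_i$ is \emph{uniformly bounded} (by the Lipschitz estimate on $\psi$) and, conditionally on $\mathcal H_N=\sigma(T_1,\dots,T_{N+1})$, the $(K_i)_i$ are \emph{independent and centered}. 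One then strips off the first $n_1=\lfloor 4\kappa\rfloor$ terms (a finite family converging almost surely in $\|\cdot\|_\infty$ to $\gamma_1$); for the remainder the conditional fourth-moment expansion contains only diagonal and pair terms, controlled by $\sum_{i>n_1}E[T_i^{-4\kappa}]$ and $(\sum_{i>n_1}E[T_i^{-4\kappa}]^{1/2})^2$, both finite since $2\kappa>1$. This gives
\[
\sup_N E\bigl[|\hat f_N^j(\x)-\hat f_N^j(\s)|^4\bigr]\le C\,|\x-\s|_1^{4}\qquad(j=2,3),
\]
and Kolmogorov with $p=4$, $\gamma=4-d>0$ yields tightness directly. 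The ``abstract tools'' of Section~\ref{tool-for-distr-conv} you invoke are nothing more than the standard tightness criteria; there is no bespoke coupling machinery.

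On the critical case $\kappa=\tfrac12$: the paper takes a slightly different path than your truncation-plus-Lindeberg, proving a small domain-of-attraction lemma that reads off the truncated variance $V(x)\sim 2(c_++c_-)\log x$ from the tail asymptotics $P(\pm\xi>x)\sim c_\pm x^{-2}$ and then applying Cram\'er--Wold. Both routes are fine. Note, though, that the paper's Section~\ref{subsec:criticalrand} only establishes f.d.d.\ convergence at $\kappa=\tfrac12$; your remark about tightness via the truncated variable having all moments would be a genuine addition if carried out.
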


\begin{remark}
{\rm
Since $\kappa<\kzero$, we have shown that the typical size of the fluctuations of the 
sequence of random profiles $(\frac{1}{N} h_N)_{N\in \N}$ is much smaller than the typical size of 
the profiles themselves.}
\end{remark}

\smallskip
\medskip

%
%
%
%
%
%

\begin{remark}[Explanation of the $\kappa_c=\undemi$ critical value in dimension $1$]  \label{explibetac}
{\rm
 Let us now give a heuristic explanation of the fact that, for the random model,  the growth rate of the fluctuations of the front (as a function of $N$) 
changes dramatically at $\kappa_c$. In order to apply the Central Limit Theorem  to $f_N$
and obtain a diffusive behavior in  $\sqrt{N}$ it is sufficient that  
$X_1(\s)-X_1(0)$ (recall \eqref{deffn}) is in $L^2$. Observe that,
thanks to Lemma~\ref{lem:hng}, since $v_y(x) \le dD$,
\begin{equation}\label{boundzx}
  \valabs{X_1(\s)-X_1(0)} = Z_1^{\alpha -d}
  \valabs{\psi\Big(\frac{v_{Y_1}(\s)}{Z_1}\Big) -
    \psi\Big(\frac{v_{Y_1}(0)}{Z_1}\Big)} 
  \le C' Z_1^{\alpha -(n+d)}
\end{equation}
A simple calculation  shows that
\begin{equation}\label{L2forz} 
  Z_1^{\alpha -(n+d)} \in L^2 \iff \kappa < \undemi.
\end{equation}}
\end{remark}

\begin{remark}[Explanation of the $\kappa_c=1/4$ critical value in dimension 2]
{\rm
In dimension $2$, the proof of the convergence for finite dimensional marginals of $(N^{-\frac{1}{2}} f_N)_{N\in \N}$
is similar to that in dimension $1$ since it only requires for $Z_1^{\alpha-n-d}$ to be in $L^2$. Therefore, 
it suffices that $\kappa<1/2$ to check this finite dimensional convergence. Proving the 
 the tightness of $(N^{-\frac{1}{2}} f_N)_{N\in \N}$ is more difficult since Proposition \ref{kolmog} can not be applied with $p=2$ and $d=2$.
What we can do, in turn is applying this proposition with $p=4$, $d=2$ and $\gamma=2$, but for this we need 
$\kappa<1/4$. }
\end{remark}

\smallskip

\subsubsection{Description of the limiting laws $\mu_1$ and $\mu_2$ and $\mu_{\text{stel}}$}\label{descrmu}


{\rm For $d\in \{1,2\}$, we characterize the law $\mu_d$ in Theorem \ref{mainres}. Note that $\mu_1$ is the law of a   random continuous function whereas $\mu_2$ is that of a random  continuous surface.
 Thus, for all $i\geq 1$ we set
\begin{equation}\label{defGi}
G_i(\x):=\frac{\psi^{(n)}(0)}{n!} \big(v_{Y_i}(\x)^n-v_{Y_i}(0)^n\big), \quad \x\in [0,D]^d.
\end{equation}
With Lemma \ref{L2} we show that $T_i^{-\kappa}\in L^4(\Omega,\cA,P) $ if   $i>4 \kappa$.  
For this reason, we let $n_1=\lfloor 4 \kappa\rfloor$ and 
for every $N\geq n_1$, we define on $(\Omega,\cA,P)$ a continuous random process $\gamma_{1}$ and a sequence of random processes $(\gamma_{2,N})_{N> n_1}$
as 
\begin{align}\label{defgamma12}
\gamma_{1}(\x):&=\sum_{i=1}^{n_1} \frac{G_i(\x)}{T_i^\kappa}, \quad\x\in [0,D]^d, \\
\nonumber \gamma_{2,N}(\x):&=\sum_{i=n_1+1}^N \frac{G_i(\x)}{T_i^\kappa} ,\quad\x\in [0,D]^d,
\end{align}
 so that  for all $\x\in [0,D]^d$ and $N>n_1$ we have $\gamma_{2,N}(\x)\in L^4(\Omega,\cA,P)$. 
With proposition \ref{convgamma2N} below, we prove that 
for all $\x\in [0,D]^d$,  the sequence of real random variables $(\gamma_{2,N}(\x))_{N\geq 1}$ converges $P$-almost surely towards a  random variable $\gamma_{2,\infty}(\x)$ that belongs to $L^2(\Omega,\cA,P)$.

\begin{definition}\label{defmud}
This allows us to define $\mu_d$ as a probability law on $\big(\R^{[0,D]^d}, \text{Bor}\,(\R)^{\otimes [0,D]^d}\big)$ whose finite dimensional marginals are, for $k\in \N$ and  $\s_1,\dots,\s_k\in [0,D]^d$ 
given by the law of the random vector
\begin{equation}\label{defgamma}
(\gamma_1(\s_1)+\gamma_{2,\infty}(\s_1),\dots,\gamma_1(\s_k)+\gamma_{2,\infty}(\s_k)).
\end{equation}
\end{definition}
In fact, Lemma \ref{lem:tkol1} establishes the tightness of $(\frac{1}{N^{\kappa}} f_N)_{N\in \N}$ in 
$(C_d,\cC_d)$. Therefore, $\mu_d$ can be considered as a probability law on $(C_d,\cC_d)$.



\begin{proposition}\label{convgamma2N}
For $\kappa> 1/2$  and for all $\x\in [0,D]^d$,  the sequence of real random variables $(\gamma_{2,N}(\x))_{N\geq n_1+1}$ converges almost surely and in $L^2(\Omega,\cA,P)$ towards a random variable $\gamma_{2,\infty}(\x)$. 
\end{proposition}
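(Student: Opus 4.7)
The plan is to recognize $(\gamma_{2,N}(\mathbf{x}))_{N}$ as an $L^2$--bounded martingale for a natural filtration, and then invoke the $L^2$ martingale convergence theorem to obtain simultaneously the almost sure and the $L^2$ convergence. The computation reduces to controlling the second moments of the summands $G_i(\mathbf{x})/T_i^\kappa$, and the condition $\kappa>1/2$ will enter exactly at the step where we need $\sum_i E[T_i^{-2\kappa}]<\infty$.

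First I would set up the filtration $\cF_N:=\sigma(Y_1,\dots,Y_N,\xi_1,\dots,\xi_N)$ and collect the two independence facts that do all the work: the sequences $(Y_i)$ and $(\xi_i)$ are independent, so $G_i(\mathbf{x})$ depends only on $Y_i$ and is independent of every $T_j$; and the $G_i(\mathbf{x})$ are i.i.d. for distinct $i$. Next, I would verify that $E[G_i(\mathbf{x})]=0$. Because $Y_1$ is uniform on $[0,D]^d$, which is the torus on which $v_{\y}$ is defined, the translation invariance of Lebesgue measure on the torus gives that $v_{Y_1}(\mathbf{x})$ and $v_{Y_1}(0)$ have the same distribution, hence $E[v_{Y_1}(\mathbf{x})^n-v_{Y_1}(0)^n]=0$ and thus $E[G_i(\mathbf{x})]=0$. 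Combined with the independence of $G_{N+1}$ from $\cF_N$ and from $T_{N+1}$, this yields
\begin{equation*}
E\Big[\tfrac{G_{N+1}(\mathbf{x})}{T_{N+1}^\kappa}\,\Big|\,\cF_N\Big]=E[G_{N+1}(\mathbf{x})]\cdot E\big[T_{N+1}^{-\kappa}\,\big|\,\cF_N\big]=0,
\end{equation*}
so $(\gamma_{2,N}(\mathbf{x}))_{N\geq n_1+1}$ is a martingale.

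Then I would bound the second moment. Using that $E[G_iG_j]=0$ for $i\neq j$ (by centering and independence) and that $G_i$ is independent of $T_i$, cross terms vanish and
\begin{equation*}
E\big[\gamma_{2,N}(\mathbf{x})^2\big]=\sum_{i=n_1+1}^N E[G_i(\mathbf{x})^2]\,E[T_i^{-2\kappa}].
\end{equation*}
Since $v_{Y_i}(\cdot)\leq dD$, the random variable $G_i(\mathbf{x})$ is bounded uniformly in $i$, so $\sup_i E[G_i(\mathbf{x})^2]<\infty$. For the second factor, $T_i\sim\Gamma(i,1)$ gives $E[T_i^{-2\kappa}]=\Gamma(i-2\kappa)/\Gamma(i)$, which, by Stirling, is asymptotically $i^{-2\kappa}$; this makes sense because $i\geq n_1+1>4\kappa>2\kappa$ on the range of summation. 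Therefore, under the standing assumption $\kappa>1/2$, we have $2\kappa>1$ and $\sum_{i\geq n_1+1} i^{-2\kappa}<\infty$, which shows $\sup_N E[\gamma_{2,N}(\mathbf{x})^2]<\infty$.

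Finally I would conclude by Doob's $L^2$ martingale convergence theorem: an $L^2$--bounded martingale converges almost surely and in $L^2$ to some limit $\gamma_{2,\infty}(\mathbf{x})\in L^2(\Omega,\cA,P)$. The main (and only nontrivial) point in the whole argument is the centering $E[G_i(\mathbf{x})]=0$, which relies on the torus symmetry of $v_{\y}$; the rest is standard second--moment bookkeeping together with the Gamma--moment estimate, and the critical threshold $\kappa=1/2$ is precisely the summability threshold for $\sum_i E[T_i^{-2\kappa}]$.
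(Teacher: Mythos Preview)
Your proposal is correct and follows essentially the same argument as the paper: same filtration $\cF_N=\sigma(Y_1,\dots,Y_N,\xi_1,\dots,\xi_N)$, same martingale verification via the centering $E[G_i(\mathbf{x})]=0$ (the paper invokes Lemma~\ref{lem:loivys} where you invoke torus translation invariance), and the same $L^2$-bound obtained by killing cross terms and using $E[T_i^{-2\kappa}]\sim i^{-2\kappa}$ (the paper's Lemma~\ref{L2}) together with $2\kappa>1$. Nothing is missing; the two proofs are interchangeable.
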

The proof of this proposition is postponed to Appendix \ref{proof-of-convgamma2N}.

\begin{remark}
{\rm
 In the case  $\kappa >1$, instead of $\kappa>\undemi$, the limiting measure $\mu_d$ is simpler to describe. 
 It is indeed the law of the random continuous function
 \begin{align}\label{defmupart}
\s\in [0,D]^d\mapsto \sum_{i=1}^\infty \frac{G_i(\s)}{T_i^\kappa}.
\end{align}
At this stage,  one can verify that $P$-almost surely, the infinite sum of functions in the right hand side of \ref{defmupart} converges normally.
This comes from the fact that there exists a $c>0$ such that $||G_i||_{\infty}\leq c$ for all $i\in \N$ and $\omega\in \Omega$.  Thus, 
\begin{align}
\sum_{i=1}^\infty \frac{||G_i||_{\infty}}{T_i^\kappa}&\leq c \sum_{i=1}^\infty \frac{1}{T_i^\kappa},
\end{align}
and it remains, since $\kappa >1$, to apply the law of large numbers that guarantees us
that $P$-almost surely $T_i=i(1+o(1))$ when  $i\to \infty$.}
 
 \end{remark}
 
 It remains to define $\mu_{\text{stel}}$. To that aim we modify \eqref{defgamma12} into 
 \begin{align}\label{defgamma12}
\gamma_{1}^{\text{stel}}(\x):&=\sum_{i=1}^{n_1} \frac{G_i(\x) \Theta_i}{T_i^\kappa}, \quad\x\in [0,D]^2, \\
\nonumber \gamma_{2,N}^{\text{stel}}(\x):&=\sum_{i=n_1+1}^N \frac{G_i(\x) \Theta_i}{T_i^\kappa},\quad\x\in [0,D]^2.
\end{align}
For every $\x\in [0,D]^2$, both $\gamma_{1}^{\text{stel}}(\x)$ and $\gamma_{2,N}^{\text{stel}}(\x)$ are random vectors of dimension $3$. 
Without any substantial modification, we can repeat the proof of Proposition \ref{convgamma2N} to show that each $3$ coordinates of $(\gamma_{2,N}^{\text{stel}}(\x))_{N\geq 1}$ converges $P$-almost surely
as $N\to \infty$. As a consequence the sequence  $(\gamma_{2,N}^{\text{stel}}(\x))_{N\geq 1}$ converges $P$-almost surely and we denote by $\gamma_{2,\infty}^{\text{stel}}(\x)$ its limit.

\begin{definition}\label{defmustel}
We define $\mu_{\text{stel}}$ as a probability law on $\big((\R^3)^{[0,D]^2}, \text{Bor}\,(\R^3)^{\otimes [0,D]^2}\big)$ whose finite dimensional marginals are, for $k\in \N$ and  $\s_1,\dots,\s_k\in [0,D]^2$ 
given by the law of the random vector
\begin{equation}\label{defgamma}
(\gamma_1^{\text{stel}}(\s_1)+\gamma_{2,\infty}^{\text{stel}}(\s_1),\dots,\gamma_1^{\text{stel}}(\s_k)+\gamma_{2,\infty}^{\text{stel}}(\s_k)).
\end{equation}
\end{definition}
By mimicking the proof of  Lemma \ref{lem:tkol1} for each three coordinates in the stellar model framework, we obtain that $\mu_{\text{stel}}$ can be considered as a probability law on $(C_2^3,\cC_2^3)$.

\bigskip

\subsection{Convergence for the min-model and for the stellar radiations model}\label{grwthspeedseis}
In the present section we provide the counterparts of Theorems \ref{speedfront} for the min-model and for the stellar model. We also provide the counterpart of Theorem \ref{fluctconver} 
for the stellar model. The reason why we are not able for the moment to provide convergence results for the fluctuations of the min-model  
 is that its growth process  is non-local. The fact that the subset of $[0,D]^d$ on top of which the profile  $h_N^{\text{m}}$ grows is located in the vicinity of one of the global minimums of $h_N^{\text{m}}$ introduces a non-trivial dependency between $h_N^{\text{m}}$ and the increment $h_{N+1}^{\text{m}}-h_N^{\text{m}}$. This complicates significantly the study.  Recall \eqref{defkzeroandkappa}.

\begin{theorem}\label{speedfrontvariant}
 Assume $\psi \in \cH^0$ and let $\kzero=\frac{\alpha -d}{\beta -1}$.
For the next two cases,
the limits are constant random variables, whose respective values are
also constant functions on $[0,D]^d$. 
 
\begin{enumerate}
\item If $\kzero>1$, then
\begin{equation}\label{convgNmin}
N^{-\kzero} h_N^{\text{m}} \Rightarrow_{N} \Bigg[\sum_{i=1}^\infty \frac{1}{T_i^{\kzero}}\Bigg] \ \ind_{[0,D]^d}, 
\end{equation}
and
\begin{equation}\label{convellN}
N^{-\kzero} h^{\text{stel}}_N \Rightarrow_{N} \Bigg[ \sum_{i=1}^\infty \frac{\Theta_i}{T_i^{\kzero}}\Bigg] \ \ind_{[0,D]^d}.
\end{equation}

\item If $\kzero=1$, then
\begin{equation}\label{convgN3min}
(N \log(N))^{-1} h_N^m \Rightarrow_{N} \ind_{[0,D]^d},
\end{equation}
and 
\begin{equation}\label{convellNal}
N^{-1} h^{\text{stel}}_N \Rightarrow_{N} \sum_{i=1}^\infty \frac{\Theta_i}{T_i^{\kzero}} \ \ind_{[0,D]^d}.
\end{equation}


\item If $\kzero<1$, then
\begin{equation}\label{convgNstel}
N^{-1} h^{\text{stel}}_N \Rightarrow_{N} 0.\\
\end{equation}

\item
If $\kzero<1$ then $(N^{-1} h^m_N)_{N\in \N}$ is a tight sequence in $C_d,||\cdot||_{\infty}$.


%

\end{enumerate}
\end{theorem}

\medskip

\begin{remark}
For now, we are not able to prove the convergence in finite dimensional distribution of $(N^{-1} h^m_N)_{N\in \N}$ towards $\gamma_{\alpha,\beta,D} 1_{[0,D]^d}$ as it is the case for the rand-model in \eqref{convgN2}.
 However,  since the function $s\to v_Y(\s)=dist(\s-\y, D
\Z^d)$ is $D$-periodic for all coordinates, its integral is the same for every cube of
size $D$, i.e.,
\begin{align*}
 \int_{[0,D]^d}
  \psi\etp{\frac{v_{\x}(M_k)}{Z_k}} \, d\x=  \int_{[0,D]^d}\psi\etp{\frac{v_{\x}(0  )}{Z_k}} \, d\x 
\end{align*}
so that after a straightforward application of the law of large numbers, we obtain that for $P$-a.e. $\omega \in \Omega$
\begin{equation}\label{convmean}
\lim_{N\to \infty} \int_{[0,D]^d} N^{-1} h^m_N(x) dx= D^d \gamma_{\alpha,\beta,D}  
\end{equation}
\end{remark}

As far as fluctuations are concerned, the sole result that we are able to display is the tightness in the Proposition below. The proof is postponed to Section \ref{tightkappalar11}.
\begin{proposition}\label{tightkappalar1}
If $\kappa>1$, the sequence $(\frac{1}{N^\kappa} f_N^{\text{m}})_{N\in \N}$ is tight in $\cC_d,||\cdot||_{\infty}$.
\end{proposition}

As mentioned above, for the next Theorem we consider the stellar model only. Thus, $d=2$ and we use $\mu_{\text{stel}}$
a probability measure  on $(C_2^3,\cC_2^3)$ that will be described in definition \ref{defmustel} above. We recall that $(\Theta_i)_{i\in \N}$
is an i.i.d. sequence of centered random vectors. We write $\Theta_i=(\Theta_i^1,\Theta_i^2,\Theta_i^3)$ its coordinates.

For notational convenience, each $k$ dimensional  vector $V=(v_1,\dots,v_k)$ is viewed as an element in $\mathcal M_{k,1}(\R)$, that is a line vector.
A column vector of the same dimension is denoted by $V^{\intercal}$. For $(V,U)\in M_{k,1}(\R)^2$, the vectorial product $V U^{\intercal}$  will sometimes be
denoted by $\langle V,U \rangle$. 

\begin{theorem}\label{fluctconverstel}
  Assume $\psi \in \cH^n$ and let
  $\kappa=\frac{\alpha-n-2}{\beta-1}$.
  

\begin{enumerate}\label{convstel}
\item For $\kappa>1/2$

\begin{equation}\label{premconvevect}
N^{-\kappa} f^{\text{stel}}_N \Rightarrow_{N} \mu_{\text{stel}},
\end{equation}
where $\mu_{\text{stel}}$ is a probability law on $(C_2^3,\cC_2^3)$ that will be defined properly in Section \ref{descrmu} below.

\item 
If $\kappa<1/2$, then 
\begin{equation}\label{terconv}
N^{-\frac{1}{2}} f_N^{\text{stel}}  \fddn Y,
\end{equation}

where $Y:[0,D]^2\mapsto \R^3$ is a centered Gaussian field with covariance function
$$r(\s,\x)=E\big[X_1(\s) X_1(\x)\big]\,  E\big[\Theta_1^{\intercal}\, \Theta_1\big]\in \mathcal{M}_3(\R).$$

\item 
If $\kappa<1/4$, then the preceding finite dimensional convergence can be extended to a
convergence for the whole process
\begin{equation}\label{terconvevect}
N^{-\frac{1}{2}} f_N^{\text{stel}} \Rightarrow_N Y\,.
\end{equation}
\item If $\kappa=\undemi$, then 
\begin{equation}
  (N \log N)^{-\frac{1}{2}} f_N \Rightarrow_N Y
\end{equation}
where $Y$ is a centered Gaussian field of covariance
\begin{equation}
  r(s,x)=\text{Cov}\,(A(s),A(x))\esp{\Theta_1^{\intercal}\, \Theta_1}\quad\text{with } A(s) =\frac{\psi^{(n)}(0)}{n!}(
  v_U(s)^n -v_U(0)^n)\,,
\end{equation}
and $U$ is a uniform on $[0,D]^d$.  
  
\end{enumerate}

\end{theorem}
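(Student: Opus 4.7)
The plan is to mimic the four-fold argument used in the proof of Theorem~\ref{fluctconver} for the rand-model, carrying through the extra centered i.i.d. factor $(\Theta_i)_{i\in\N}$ which is independent of $(Y_i,Z_i)_{i\in\N}$. Two structural simplifications come from the centering of $\Theta_i$: the summands $X_i(\x)\Theta_i$ already have mean zero, so $f_N^{\text{stel}}$ can be treated as $h_N^{\text{stel}}$ itself with no recentering at $0$, and the covariance factorizes as
\begin{equation*}
E\big[(X_i(\s)\Theta_i)^{\intercal}(X_j(\x)\Theta_j)\big] = \delta_{ij}\, E[X_1(\s)X_1(\x)] \, E[\Theta_1^{\intercal}\Theta_1],
\end{equation*}
which accounts for the tensorial form of the limiting covariance in items~$2$--$4$.

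For item~$1$ ($\kappa > \unsur{2}$), I would adapt the construction of $\mu_d$ (Definition~\ref{defmud}) to the stellar setting by replacing $G_i(\x)$ with $G_i(\x)\Theta_i$ throughout. Concretely, the argument uses the order statistics representation of the Pareto variables $Z_i$ in terms of the partial sums $T_i$ of i.i.d. exponentials, a Taylor expansion of $\psi$ at $0$ permitted by $\psi\in\cH^n$ to isolate the leading $\x$-dependent term $\tfrac{\psi^{(n)}(0)}{n!}Z_i^{\alpha-n-d}v_{Y_i}(\x)^n\Theta_i$, and the almost sure and $L^2$-convergence of the tail series $\gamma_{2,N}^{\text{stel}}$ already announced before Definition~\ref{defmustel}. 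Tightness in $C_2^3$ follows by applying the coordinate-wise analogue of Lemma~\ref{lem:tkol1}, the boundedness of $E[\Theta_1^{\intercal}\Theta_1]$ contributing only harmless multiplicative constants to the $L^p$-increment bounds.

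For items~$2$ and~$3$ ($\kappa < \unsur{2}$ and $\kappa < \unsur{4}$), the finite-dimensional convergence follows from the multivariate central limit theorem applied to the i.i.d. sequence of centered random vectors $(X_i(\s_1)\Theta_i,\dots,X_i(\s_k)\Theta_i)_{i\in\N}$. The second-moment condition reduces to $Z_1^{\alpha-d}\in L^2$ since $|X_1(\s)|\le Z_1^{\alpha-d}$ uniformly in $\s$; this holds as soon as $\kappa<\unsur{2}$ and is strictly weaker than the rand-model requirement $Z_1^{\alpha-n-d}\in L^2$ discussed in Remark~\ref{explibetac}, explaining why no restriction to $d=1$ appears in item~$2$. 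For item~$3$, tightness in $C_2^3$ is obtained by applying Proposition~\ref{kolmog} to each of the three coordinates with $p=4$, $d=2$, $\gamma=2$: the required fourth-moment increment bound
\begin{equation*}
E\big[|X_1(\s)-X_1(\x)|^{4}\big]\,E\big[\|\Theta_1\|^{4}\big]\le c\,|\s-\x|^{4}
\end{equation*}
holds under $\kappa<\unsur{4}$ thanks to Lemma~\ref{lem:hng} combined with $E[Z_1^{4(\alpha-n-d)}]<\infty$.

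The main obstacle is item~$4$, the critical regime $\kappa=\unsur{2}$, where the second moment of $X_1(\s)\Theta_1$ is on the borderline of divergence. Following the rand-model strategy at criticality (item~$3$ of Theorem~\ref{fluctconver}), I would truncate $Z_i$ at a threshold depending on $N$, extract the leading Taylor contribution so that $X_i(\x)\Theta_i\approx Z_i^{\alpha-n-d}A_i(\x)\Theta_i$ with $A_i(\x)=\tfrac{\psi^{(n)}(0)}{n!}(v_{Y_i}(\x)^n-v_{Y_i}(0)^n)$ and remainders of strictly smaller order, and exploit the logarithmic blow-up $E[Z_1^{2(\alpha-n-d)}\ind_{\{Z_1\le z\}}]\sim c\log z$ valid precisely when $\kappa=\unsur{2}$. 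A Lindeberg triangular-array CLT then yields the Gaussian field $Y$ with the stated covariance after normalization by $\sqrt{N\log N}$; the remainder and truncation errors are controlled componentwise and uniformly in $\x$ by the same Kolmogorov-type tightness argument used in the rand-model, with the $\log$ factor carefully tracked.
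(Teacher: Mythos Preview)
Your overall architecture matches the paper's proof in Section~\ref{Preuvesstel}: reduce to each of the three coordinates via Proposition~\ref{moredimconv}, and carry the rand-model arguments (Lemma~\ref{lem:tkol1}, Lemma~\ref{lem:conv23}, the multivariate CLT, the fourth-moment Kolmogorov bound) through with the extra bounded, centered, independent factors~$\Theta_i^j$. For items~$1$ and~$3$ your sketch is essentially what the paper does.

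There is however a genuine error in your handling of item~$2$. You propose to ``treat $f_N^{\text{stel}}$ as $h_N^{\text{stel}}$ itself with no recentering at~$0$'' and then claim that the resulting second-moment requirement $Z_1^{\alpha-d}\in L^2$ ``holds as soon as $\kappa<\unsur{2}$ and is strictly weaker than the rand-model requirement $Z_1^{\alpha-n-d}\in L^2$''. This is backwards on both counts: $Z_1^{\alpha-d}\in L^2$ is equivalent to $\zeta=\frac{\alpha-d}{\beta-1}<\unsur{2}$, and since $\zeta=\kappa+\frac{n}{\beta-1}>\kappa$ this is strictly \emph{stronger} than $\kappa<\unsur{2}$, not weaker. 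Under the sole hypothesis $\kappa<\unsur{2}$ the variable $X_1(\s)\Theta_1$ need not lie in $L^2$, so your CLT as stated does not apply, and your ``explanation'' for why no restriction to $d=1$ appears rests on a false premise. The correct route keeps the differenced summands $(X_i(\x)-X_i(0))\Theta_i$, for which Lemma~\ref{lem:hng} yields $\valabs{X_1(\x)-X_1(0)}\le C Z_1^{\alpha-n-d}$ and hence the $L^2$ condition is exactly $\kappa<\unsur{2}$, just as in Remark~\ref{explibetac}. You in fact use the differenced form $A_i(\x)$ in item~$4$, so your own sketch is internally inconsistent on this point. (The paper's computation~\eqref{tclmultvardim3} is itself terse and writes $X_i(\x_j)$ rather than the increment, matching the covariance formula in the theorem statement; whichever convention is intended, your justification of the moment condition is incorrect as written.)

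For item~$4$ your route (truncate $Z_i$ at an $N$-dependent level, extract the leading Taylor term, apply a Lindeberg triangular-array CLT) is a legitimate alternative but differs from the paper's. The paper instead applies Lemma~\ref{lem:att_par_normal} directly to $\xi_s=\Theta_1^j(X_1(s)-X_1(0))$: one checks the tail asymptotics $\prob{\pm\xi_s>x}\sim c_\pm x^{-2}$ (conditioning on $\Theta_1^j$ and $Y_1$ and using the Pareto tail of $Z_1$ together with the expansion~\eqref{eq:hng1}), and the domain-of-attraction result from \cite{BGT87} then gives the $\sqrt{N\log N}$ normalization in one stroke, without any truncation bookkeeping.
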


\subsection{Perspectives and discussion.}\label{perspec}

Completing our investigation of the min model requires to identify the asymptotics of the fluctuations for $(h_N^{\text{m}})_{N\in \N}$.
This is more difficult than it is for the rand and stellar models. Indeed, the fact that for every $i\in \N$ the center position $Y_i$ 
of the $i$-th transformation depends on the whole trajectory $h_{i-1}^{\text{m}}$  makes the computations to obtain the tightness of $(h_N^{\text{m}})_{N\in \N}$ and its convergence in finite dimensional distribution much more intricate. However, we can 
make some conjectures.
\begin{enumerate}
\item In the case where $\kappa<\kappa_c$, we expect that for $d\in \{1,2\}$
the result obtained in Theorem \ref{fluctconver}  for the rand model holds true for the min model as well. Indeed, in this case 
the tail of the $Z$ variables is so heavy that only finitely many of them (the largest of course) are sufficient to provide a good 
approximation of the 
limiting distributions (cf. the definitions of $\mu_1$ and $\mu_2$ in Section \ref{descrmu} and Figure \ref{fig:both_figureskappapet}).  The indices of those very big variables $Z$ are 
chosen uniformly at random among $\{1,\dots,N\}$ and therefore the space between them also tends to $\infty$ with $N$ (at least in probability).
Assuming some mixing features of the growth model we can reasonably expect that the dependence between the $Y$ variables associated with those 
very large values of $Z$ vanishes as $N\to \infty$. Therefore, we should recover the limits $\mu_1$ for $d=1$ and $\mu_2$ for $d=2$.

\item When $\kappa>\kappa_c$, the tail of the $Z$ variables is lighter. 
We observe with Figure \ref{fig:both_figureskappapetit} that the fluctuations seem to be of finite order but we are not able to provide convincing heuristics for the moment.

\end{enumerate}     

\begin{figure}[h]
    \centering
    \begin{subfigure}[b]{0.45\textwidth}
        \centering
        \includegraphics[width=\textwidth]{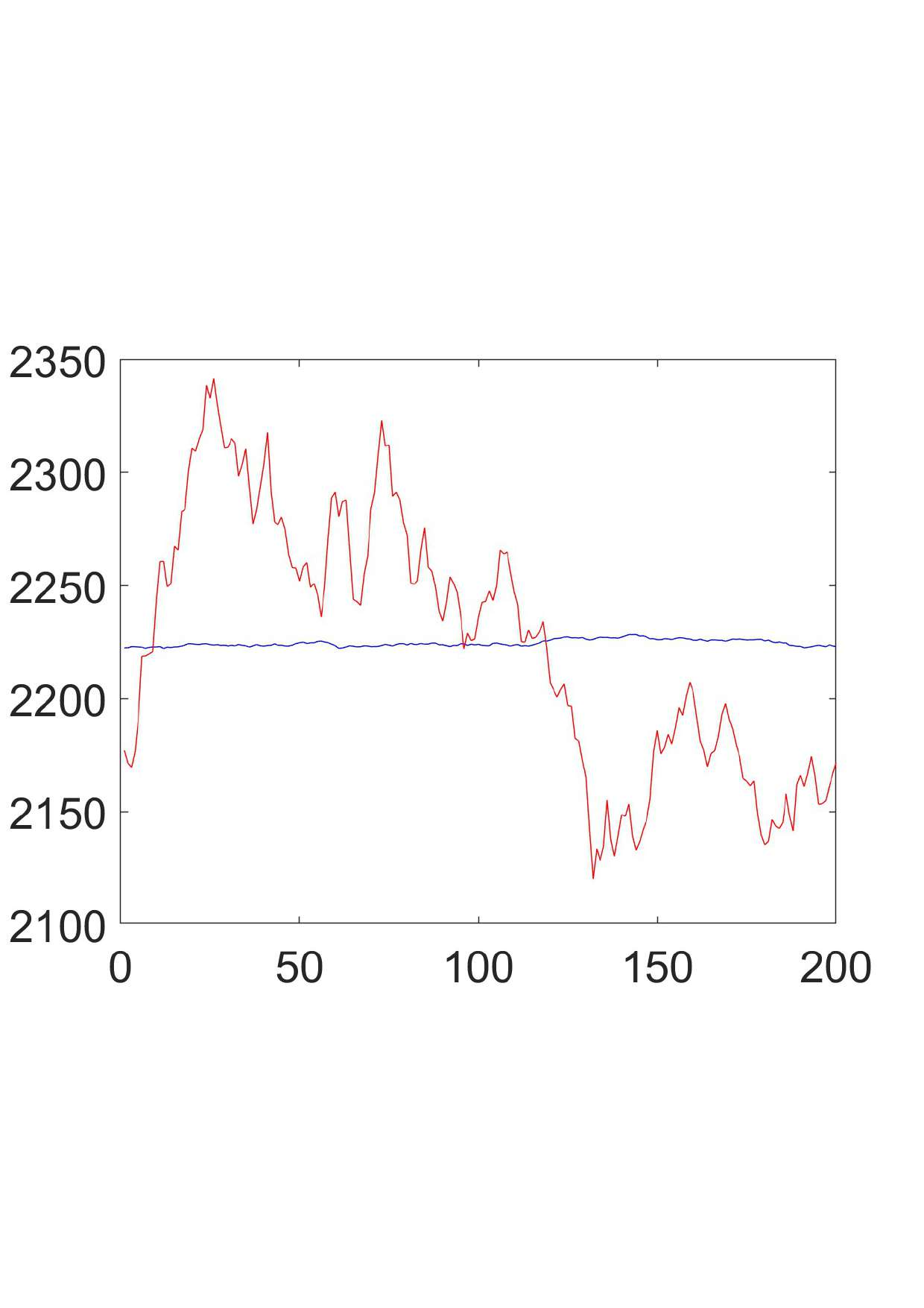}
        \vspace{-2cm}
        \caption{Sampling of $h_N$ for the rand model (in orange) and for the min model (in blue), with 
        $d=1$, $N=10^4$, $D=200$, $\alpha=1.5$, $\beta=2$ and $\psi(x)=(1-x) \, \ind_{[0,1]}(x)$ that is $n=1$.
        Note that the sequence of random variables $(Z_i)_{i=1}^N$ is the same for both sampling. In this case $\kzero=0.5$ and $\kappa=-0.5$. Therefore, the rand model is ballistic with diffusive fluctuations and the min model is ballistic as well.}
        \label{fig:figure1}
    \end{subfigure}
    \hfill
    \begin{subfigure}[b]{0.45\textwidth}
        \centering
        \includegraphics[width=\textwidth]{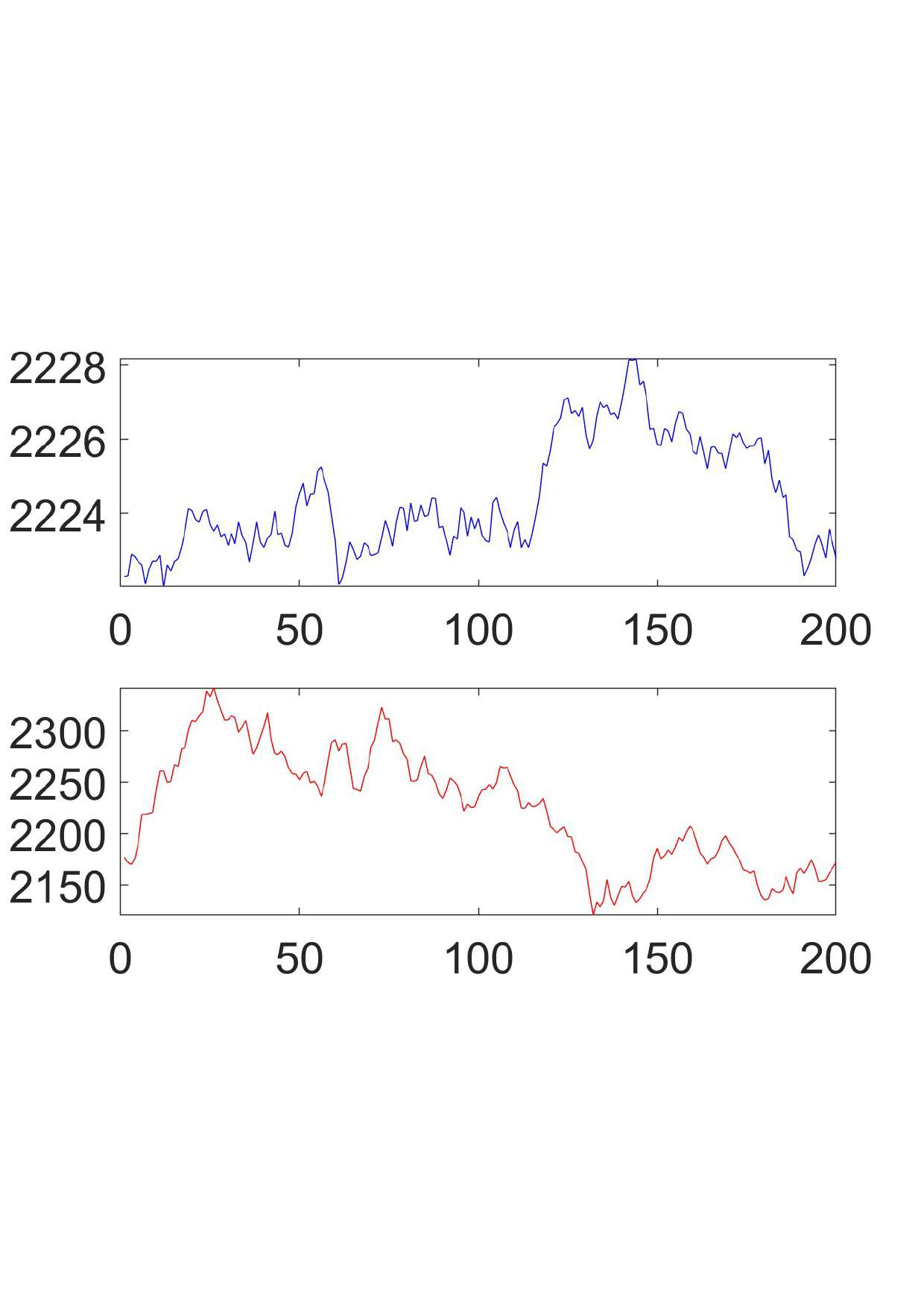}
         \vspace{-1.2cm}
        \caption{The lower figure is a zoom of $h_N$ for the random model drawn on the left. We observe that the 
        fluctuations are of order $100$ which corroborates the result \eqref{secconv} in Theorem \ref{fluctconver}. The upper figure is a zoom of $h_N^{\text{m}}$ for the min model 
        drawn on the left. The fluctuations seem to be of finite order.}
        \label{fig:both_figureskappapetit}
    \end{subfigure}
    \caption{}
\end{figure}

\begin{figure}[h]
    \centering
    \begin{subfigure}[b]{0.48\textwidth}
        \centering
        \includegraphics[width=\textwidth]{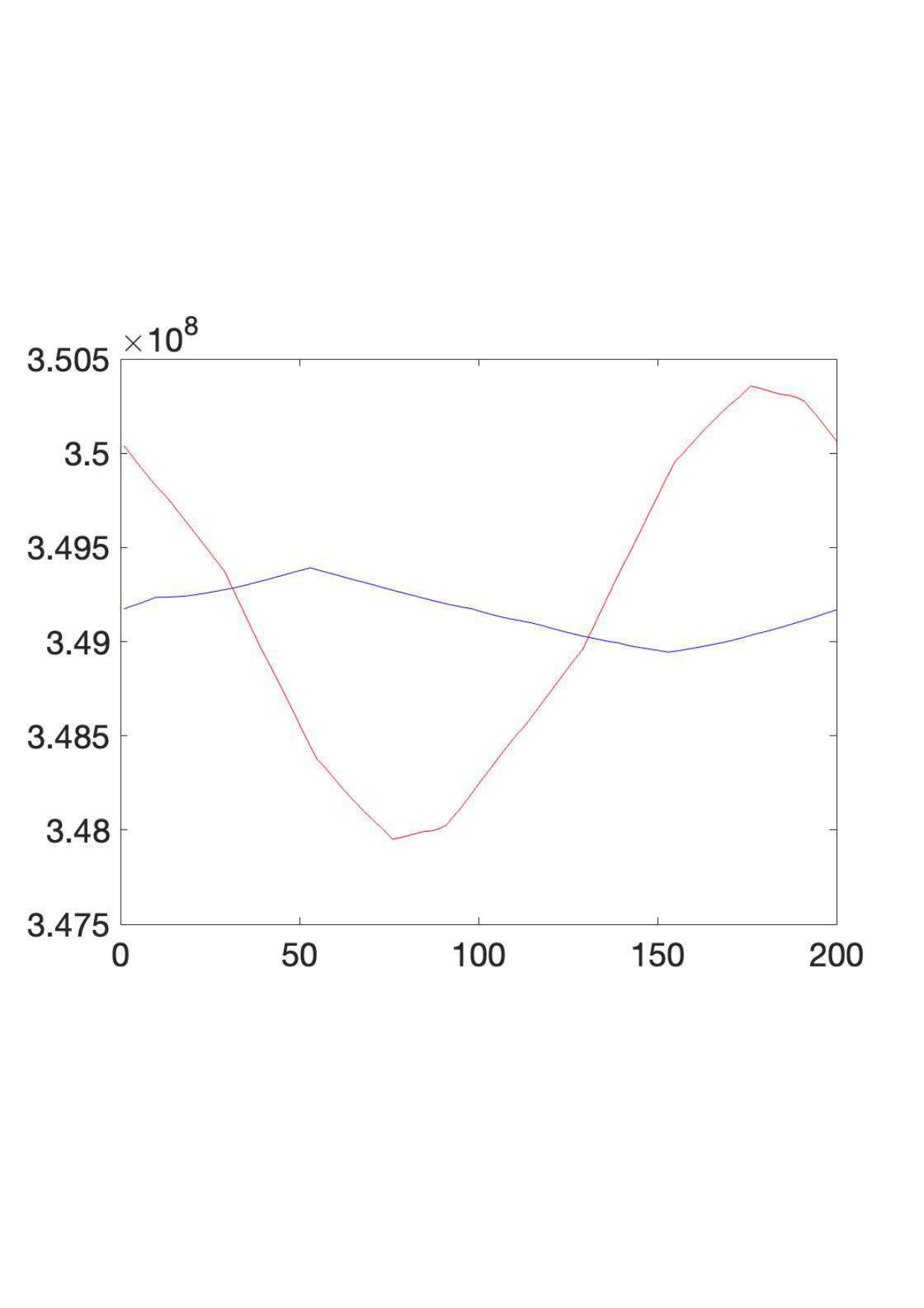}
        \vspace{-3cm}
        \caption{Sampling of $h_N$ for the rand model (in orange) and for the min model (in blue), with 
        $d=1$, $N=10^4$, $D=200$, $\alpha=3$, $\beta=2$ and $\psi(x)=(1-x) \, \ind_{[0,1]}(x)$ that is $n=1$.
        Note that the sequence of random variables $(Z_i)_{i=1}^N$ is the same for both sampling. In this case $\kappa=1$.}
        \label{fig:figure1}
    \end{subfigure}
    \hfill
    \begin{subfigure}[b]{0.48\textwidth}
        \centering
        \includegraphics[width=\textwidth]{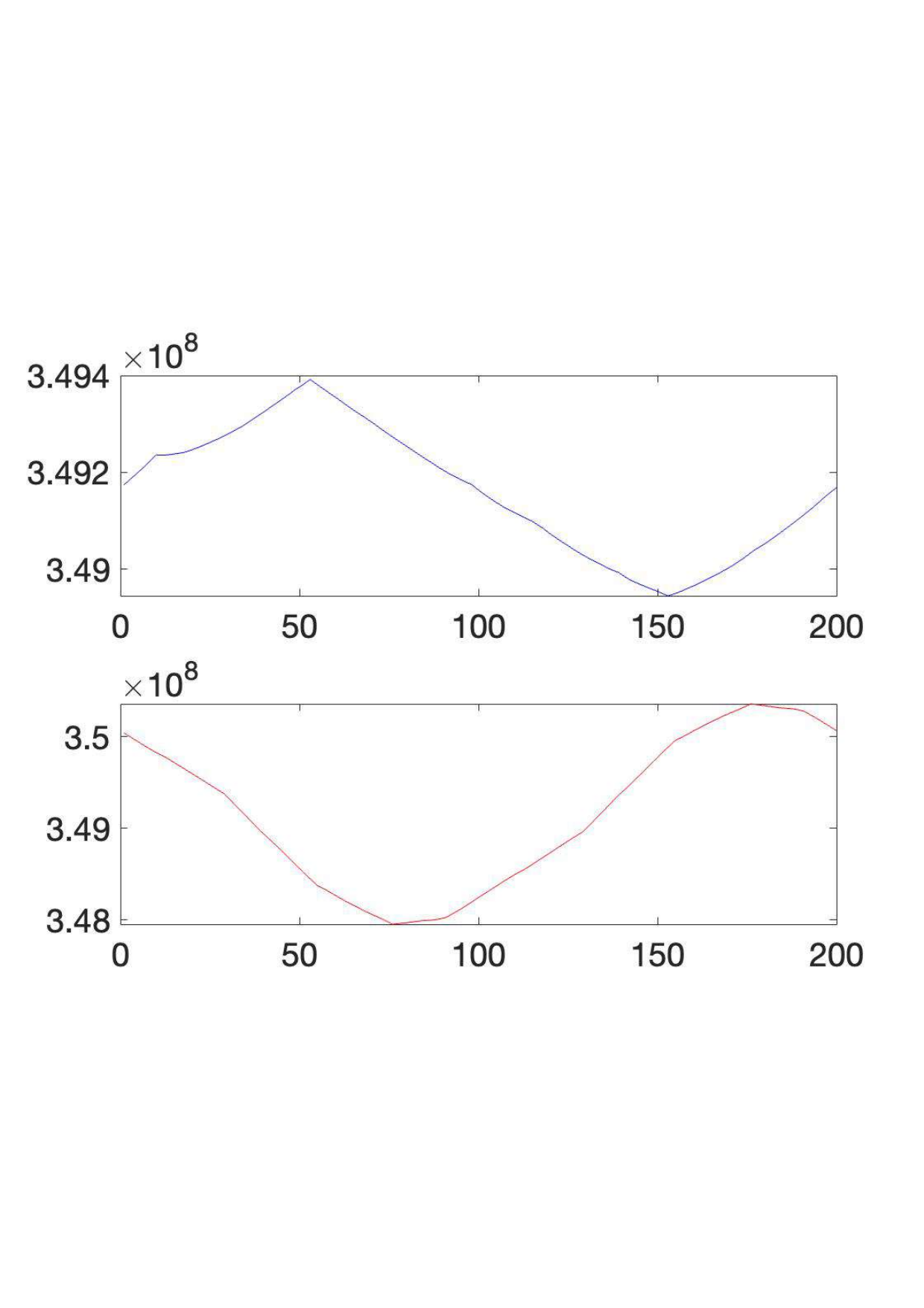}
         \vspace{-3cm}
        \caption{The lower figure is a zoom of $h_N$ for the rand model drawn on the left and the upper figure is a zoom of $h_N^{\text{m}}$ for the min model. We observe that in this case, the global shape of both curves representing $h_N$ and $h_N^{\text{m}}$ seems to be imposed by finitely many transformations (those associated with the largest values of $Z$).}
        \label{fig:figure2}
    \end{subfigure}
    \caption{}
    \label{fig:both_figureskappapet}
\end{figure}

\section{Preparation: convergence in distribution of random functions in $(C_d,\cC_d)$.}\label{tool-for-distr-conv}

In this Section we describe some strategies to prove the convergence in distribution of a sequence
of random functions. 

\subsection{Case of real random functions.}\label{realrandfunc} 

We consider $(L_N)_{n\in \N}$ a sequence of random variables from $(\Omega,\cA,P)$ to $(C_d,\cC_d)$. 
In other words, for every $N\in \N$ we let  $L_N(\omega): [0,D]^d\mapsto \R$ be a continuous function
such that $L_N:(\Omega,\cA,P)\mapsto (C_d,\cC_d)$ is a random variable. We will need several times in this paper to 
prove that  $(L_N)_{N\in \N}$ converges in law towards $\mu$ a probability measure on $(C_d,\cC_d)$. The following 
allows us to prove such convergences.


\begin{theorem}{\cite[Theorem 7.1]{B}}\label{convlawpro}
 The sequence $(L_N)_{N\in \N}$ converges in law towards $\mu$ if and only if 
$(L_N)_{N\in \N}$ is a tight sequence of random variables
and  $(L_N)_{N\in \N}$ converges to $\mu$ for finite dimensional marginals, i.e., 
 for $k\in \N$ and $(\s_1,\dots,\s_k) \in ([0,D]^d)^k$,
 \begin{equation}\label{margfi}
 \lim_{N\to \infty} (L_N(\s_1),\dots,L_N(\s_k))=_{law} (S(\s_1),\dots,S(\s_k))
 \end{equation}
 with $S$ a random function of law $\mu$.
\end{theorem}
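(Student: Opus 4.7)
The plan is to deduce this from Prokhorov's theorem applied in the Polish space $(C_d,||\cdot||_{\infty})$, combined with the fact that a Borel probability on $(C_d,\cC_d)$ is uniquely determined by its finite-dimensional marginals.

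For the direct implication, suppose $L_N\Rightarrow_N \mu$. Given $k\in \N$ and $(\s_1,\dots,\s_k)\in ([0,D]^d)^k$, the evaluation map $\pi:(C_d,||\cdot||_{\infty})\to \R^k$ defined by $f\mapsto (f(\s_1),\dots,f(\s_k))$ is continuous, so the continuous mapping theorem gives \eqref{margfi}. Tightness then follows from the fact that any weakly convergent sequence of Borel probabilities on a Polish space is automatically tight, which is the easy half of Prokhorov's theorem.

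For the converse, assume both tightness and the finite-dimensional convergence \eqref{margfi}. The harder half of Prokhorov's theorem ensures that a tight sequence of probability measures on the Polish space $(C_d,||\cdot||_{\infty})$ is relatively compact for the weak topology. Consequently, every subsequence $(L_{N_j})_j$ admits a further subsequence $(L_{N_{j_\ell}})_\ell$ converging in law to some probability $\nu$ on $(C_d,\cC_d)$. Applying the direct implication to this sub-subsequence, its finite-dimensional marginals converge to those of $\nu$; comparing with \eqref{margfi}, the finite-dimensional marginals of $\nu$ and $\mu$ coincide.

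To conclude that $\nu=\mu$, I would rely on the fact that the cylinder sets $\{f\in C_d\colon (f(\s_1),\dots,f(\s_k))\in B\}$ with $B\in \text{Bor}(\R^k)$ form a $\pi$-system generating $\cC_d$. This uses the separability of $(C_d,||\cdot||_{\infty})$: a continuous function is determined by its values on a countable dense subset of $[0,D]^d$, so cylindrical events separate points and generate the Borel $\sigma$-algebra, and Dynkin's $\pi$--$\lambda$ lemma then yields $\nu=\mu$. Since every weak subsequential limit equals $\mu$, a standard metric-space argument (a sequence converges iff every subsequence admits a further subsequence converging to the same limit) finishes the proof. The main obstacle is really the invocation of Prokhorov's theorem in $(C_d,||\cdot||_{\infty})$; once that Polish-space compactness criterion is granted, the rest is a routine uniqueness-of-limit exercise, which is exactly why in the sequel tightness will be the nontrivial thing to verify for the concrete sequences $(N^{-\kappa}f_N)_{N\in \N}$.
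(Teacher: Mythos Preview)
Your argument is correct and is essentially the standard proof via Prokhorov's theorem and the fact that finite-dimensional distributions determine a Borel law on $C_d$. Note, however, that the paper does not prove this statement at all: it is merely quoted as \cite[Theorem~7.1]{B} and used as a black box, so there is no in-paper proof to compare against. Your write-up reproduces the classical Billingsley argument and is a valid justification of the cited result.
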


\begin{remark}\label{remconvprobponct}
{\rm
The convergence in probability of $(L_N(\s))_{N\in \N}$ to $S(\s)$ for every $\s\in [0,D]^d$ implies the 
convergence of finite dimensional marginals \eqref{margfi}.}
\end{remark}

Proving the tightness of $(L_N)_{N\in \N}$  requires to define the continuity modulus of function in $C_d$. To that aim, for $h\in C_d^k$ and $\delta>0$ we set 
\begin{equation}\label{contmod}
 w_{h}(\delta):=\sup_{\s, \x\in [0,D]^d\colon\; |\s-\x|_1 \leq \delta}\quad  |h(\s)-h(\x)|.
 \end{equation}
 
\begin{proposition}{\cite[Theorem 7.3]{B}}\label{tightness1}
If  $(L_N(0))_{N\in \N}$ is a tight  sequence of real random variables and if
for every $\gep >0$
\begin{equation}\label{tension} 
\lim_{\delta \to 0} \, \limsup_{N\to \infty}  P\Big(w_{L_N}(\delta)\geq \gep \Big)=0,
\end{equation}
then,  $(L_N)_{N\in \N}$ is tight sequence of random functions.
\end{proposition}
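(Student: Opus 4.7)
The plan is to derive this tightness criterion from the Arzelà--Ascoli characterization of relatively compact subsets of $C_d$ endowed with the uniform norm. Recall that a subset $K\subset C_d$ is relatively compact if and only if (i) $K$ is pointwise bounded at some point (say at $0$), meaning $\sup_{h\in K}|h(0)|<\infty$, and (ii) $K$ is equicontinuous, that is for every $\gep>0$ there exists $\delta>0$ such that $\sup_{h\in K} w_h(\delta)\le \gep$. Since $[0,D]^d$ is compact, (i) and (ii) together in fact imply that $K$ is uniformly bounded. To obtain tightness of the sequence $(L_N)_{N\in\N}$, I must construct, for each $\eta>0$, a compact set $\cK_\eta\subset C_d$ with $P(L_N\in \cK_\eta)\ge 1-\eta$ for every $N$.

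First I would use the tightness of $(L_N(0))_{N\in\N}$ in $\R$ to select $M>0$ with $P(|L_N(0)|>M)\le \eta/2$ uniformly in $N$. Next, for each integer $k\ge 1$, I would invoke the hypothesis \eqref{tension} with $\gep=1/k$ to pick $\delta_k>0$ so that
\begin{equation*}
\limsup_{N\to\infty} P\bigl(w_{L_N}(\delta_k)\ge 1/k\bigr)\le \frac{\eta}{2^{k+2}}.
\end{equation*}
To convert this limsup control into uniform-in-$N$ control, I would exploit that each $L_N$ is almost surely continuous on the compact set $[0,D]^d$, hence $w_{L_N}(\delta)\to 0$ $P$-a.s. as $\delta\to 0$. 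By dominated convergence (the modulus is bounded by $2\|L_N\|_\infty$ on the event $\{|L_N(0)|\le M\}\cap\{w_{L_N}(1)\le 1\}$, or more simply by the fact that $P(w_{L_N}(\delta)\ge 1/k)\to 0$ as $\delta\to 0$ for each fixed $N$), I may shrink $\delta_k$ if necessary so that $P(w_{L_N}(\delta_k)\ge 1/k)\le \eta/2^{k+2}$ for every $N$, not only in the limsup.

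Set
\begin{equation*}
\cK_\eta:=\bigl\{h\in C_d:\, |h(0)|\le M\ \text{and}\ w_h(\delta_k)\le 1/k\ \text{for all}\ k\ge 1\bigr\}.
\end{equation*}
By construction $\cK_\eta$ satisfies the Arzelà--Ascoli conditions (i) and (ii), and it is closed since $h\mapsto h(0)$ and $h\mapsto w_h(\delta_k)$ are continuous with respect to $\|\cdot\|_\infty$; therefore $\cK_\eta$ is compact in $(C_d,\|\cdot\|_\infty)$. Finally, a union bound gives
\begin{equation*}
P(L_N\notin \cK_\eta)\le P(|L_N(0)|>M)+\sum_{k\ge 1} P\bigl(w_{L_N}(\delta_k)>1/k\bigr)\le \frac{\eta}{2}+\sum_{k\ge 1}\frac{\eta}{2^{k+2}}\le \eta,
\end{equation*}
uniformly in $N$. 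This yields the tightness of $(L_N)_{N\in\N}$ in $(C_d,\cC_d)$.

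The main obstacle in this argument is the passage from the \emph{limsup} bound in hypothesis \eqref{tension} to a bound that holds \emph{uniformly} in $N$, a subtlety that is usually glossed over. It is resolved by observing that since each sample $L_N(\omega)$ is continuous on a compact, one has $\lim_{\delta\to 0}P(w_{L_N}(\delta)\ge 1/k)=0$ for every fixed $N$, so the finitely many indices $N$ not already controlled by the limsup can be handled by choosing $\delta_k$ small enough.
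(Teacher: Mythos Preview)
Your argument is correct and is essentially the standard Arzel\`a--Ascoli route found in Billingsley. One small point worth tightening: from $\limsup_N P(w_{L_N}(\delta_k)\ge 1/k)\le \eta/2^{k+2}$ you cannot immediately conclude that every term past some $N_0$ is $\le \eta/2^{k+2}$, only that it is $<\eta/2^{k+2}+\epsilon$ for any $\epsilon>0$. Since the hypothesis gives that the limsup tends to $0$ as $\delta\to 0$, you may simply choose $\delta_k$ so that the limsup is strictly below $\eta/2^{k+2}$, and then your shrinking argument for the finitely many remaining $N$ goes through exactly as written.

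As for comparison with the paper: the paper does not supply its own proof of this proposition; it is quoted directly as \cite[Theorem 7.3]{B}. Your proof is the standard one and matches what one finds there.
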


The tightness of $(L_N)_{N\in \N}$ may also be proven with the help of Kolmogorov criterion 
(see e.g.\cite{B}) and stated in the following Proposition. This proposition, although well-known 
by probabilists is proven in Appendix \ref{kolmogcritproof}.
\begin{proposition}\label{kolmog}
Let $d\in \N$ and let $(L_N)_{N\in\N}$ be a sequence of real valued random continuous functions on $[0,D]^d$.
\begin{itemize}
\item  If there exist  $p>1,C>0$ and $\gamma>0$ such that   for all $\x,\s\in [0,D]^d$
\begin{equation}\label{tensionkol}
\underset{N\geq 1}{\sup} \E\Big[\Big| L_N(\x)-L_N(\s)\Big |^p\Big]\leq C |\x-\s|_1^{d+\gamma},
\end{equation}
\item if the sequence of real random variables $(L_N(0))_{N\in \N}$
is tight,
\end{itemize}
then $(L_N)_{N\in \N}$ is a tight sequence of random continuous functions.

\end{proposition}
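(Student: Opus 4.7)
The plan is to reduce to Proposition \ref{tightness1} by controlling the continuity modulus $w_{L_N}(\delta)$ via a Kolmogorov-type chaining argument on dyadic grids. Since the tightness of $(L_N(0))_{N\in\N}$ is part of the hypothesis, it suffices to show that for every $\varepsilon>0$,
\begin{equation*}
\lim_{\delta\to 0}\, \limsup_{N\to\infty} P\bigl(w_{L_N}(\delta)\geq \varepsilon\bigr)=0.
\end{equation*}

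For each $k\geq 0$ I would introduce the dyadic grid $D_k:=\{iD/2^k\colon 0\leq i\leq 2^k\}^d\subset [0,D]^d$, whose cardinality is bounded by $C_d\, 2^{kd}$, and a projection $\pi_k:[0,D]^d\to D_k$ sending $x$ to a closest lattice point, so that $|x-\pi_k(x)|_1\leq dD/2^{k+1}$. Given $x,y\in[0,D]^d$ with $|x-y|_1\leq\delta$ and $k_0$ the unique integer with $D/2^{k_0+1}\leq \delta< D/2^{k_0}$, I use the telescoping identity (valid by continuity of $L_N$)
\begin{equation*}
L_N(x)-L_N(y)=\bigl[L_N(\pi_{k_0}(x))-L_N(\pi_{k_0}(y))\bigr]+\sum_{k\geq k_0}\bigl[L_N(\pi_{k+1}(x))-L_N(\pi_k(x))\bigr]-\sum_{k\geq k_0}\bigl[L_N(\pi_{k+1}(y))-L_N(\pi_k(y))\bigr].
\end{equation*}
This suggests introducing the two families of maxima
\begin{equation*}
J_k(N):=\max_{z\in D_{k+1}}\bigl|L_N(z)-L_N(\pi_k(z))\bigr|,\qquad W_k(N):=\max\bigl\{|L_N(z)-L_N(w)|\colon z,w\in D_k,\ |z-w|_1\leq (d+2)D/2^k\bigr\},
\end{equation*}
so that the chaining inequality, combined with the observation that $|\pi_{k_0}(x)-\pi_{k_0}(y)|_1\leq (d+2)\delta\leq (d+2)D/2^{k_0}$, yields $w_{L_N}(\delta)\leq W_{k_0}(N)+2\sum_{k\geq k_0}J_k(N)$.

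The key step is then to bound $\E[J_k(N)^p]$ and $\E[W_k(N)^p]$ uniformly in $N$ using hypothesis \eqref{tensionkol}. A union bound over the (at most $C\, 2^{kd}$) relevant pairs gives
\begin{equation*}
\E[J_k(N)^p]\leq C\,2^{kd}\cdot \bigl(dD/2^{k+1}\bigr)^{d+\gamma}\leq C'\, 2^{-k\gamma},\qquad \E[W_k(N)^p]\leq C''\, 2^{-k\gamma}.
\end{equation*}
Passing to $L^1$-norms via Jensen gives $\E[J_k(N)]\leq C_1\, 2^{-k\gamma/p}$ and $\E[W_k(N)]\leq C_1\, 2^{-k\gamma/p}$, and since $\gamma/p>0$ the tail series $\sum_{k\geq k_0}2^{-k\gamma/p}$ is geometric with ratio $<1$. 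Summing yields $\E[w_{L_N}(\delta)]\leq C_2\, 2^{-k_0\gamma/p}\leq C_3\, \delta^{\gamma/p}$, uniformly in $N$. An application of Markov's inequality then gives $P(w_{L_N}(\delta)\geq \varepsilon)\leq C_3\,\delta^{\gamma/p}/\varepsilon$, which goes to $0$ as $\delta\to 0$ uniformly in $N$, establishing condition \eqref{tension} of Proposition \ref{tightness1}.

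The only genuinely delicate step is the bookkeeping in the chaining: one must verify that the cardinality of the dyadic grid (growing like $2^{kd}$) is compensated by the moment increment bound (of order $2^{-k(d+\gamma)}$) so that the resulting geometric series converges; this is precisely where the hypothesis $|{\cdot}|_1^{d+\gamma}$ with exponent strictly greater than $d$ is indispensable. Everything else is routine moment computation.
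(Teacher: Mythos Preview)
Your chaining argument is correct and self-contained. The paper's own proof takes a slightly different route: rather than reducing to Proposition~\ref{tightness1} via an explicit dyadic chaining bound on $\E[w_{L_N}(\delta)]$, it invokes an external reference (an exercise in Stroock's book) which, from the moment hypothesis~\eqref{tensionkol}, directly yields a uniform bound $\sup_N \E\big[[L_N]_\alpha\big]\le \tilde C$ on the expected H\"older seminorm for any $\alpha\in(0,\gamma/p)$. It then concludes by noting that closed balls of the H\"older space $\cC^\alpha([0,D]^d)$ are compact in $(\cC_d,\|\cdot\|_\infty)$ by Arzel\`a--Ascoli, and combines tightness of $(L_N(0))_N$ with Markov on the seminorm to trap $L_N$ in such a ball with high probability.

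In substance the two arguments are the same: the Stroock exercise the paper cites is precisely a packaged form of the chaining estimate you wrote out by hand, and your bound $\E[w_{L_N}(\delta)]\le C_3\,\delta^{\gamma/p}$ is equivalent to a uniform bound on the expected $(\gamma/p)$-H\"older seminorm. What your version buys is self-containment (no appeal to an outside text); what the paper's version buys is brevity and a slightly cleaner conclusion via compactness of H\"older balls rather than via the modulus-of-continuity criterion.
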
\label{mathtools}

\subsection{Case of vector  random functions}\label{vectconv}

In the framework of the stellar model, we need to take into account sequences of random functions $(L_N)_{N\in \N}$
such that $L_N:[0,D]^2\mapsto \R^3$. We let  $L_N^{1}, L_N^2, L_N^3$ be the coordinates of $L_N$. First, it turns out that checking the tightness of each coordinate
is sufficient to derive the tightness of $(L_N)_{N\in \N}$.
\begin{proposition}\label{moredimconv}
If  $(L^j_N)_{N\in \N}$ is a tight sequence of real random functions for every $j\in \{1,2,3\}$,
then $(L_N)_{N\in \N}$ is also a tight sequence of random functions. 
\end{proposition}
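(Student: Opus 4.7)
The plan is to reduce the vector-valued tightness statement to the scalar one by applying Proposition \ref{tightness1} directly to $(L_N)_{N\in \N}$. Equipping $\R^3$ with the $\ell^\infty$-norm (any norm works by equivalence of norms on $\R^3$), the uniform norm on $C_d^3$ becomes $\|f\|_\infty=\sup_{\x\in[0,D]^d}\max_{j\in\{1,2,3\}}|f^j(\x)|$. With this choice, the continuity modulus defined in \eqref{contmod} satisfies
\begin{equation*}
w_{L_N}(\delta)=\max_{j\in\{1,2,3\}} w_{L_N^j}(\delta),
\end{equation*}
which is the key observation that will let us transfer both tightness conditions coordinate by coordinate.

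First, I would check pointwise tightness of $(L_N(0))_{N\in\N}$ in $\R^3$. For every $\gep>0$ and $j\in\{1,2,3\}$, since $(L_N^j)_{N\in \N}$ is tight in $(C_d,\cC_d)$, the real random variables $(L_N^j(0))_{N\in \N}$ are tight in $\R$ (apply any continuous bounded evaluation functional). Thus there exists $M_j>0$ such that $\sup_N P(|L_N^j(0)|>M_j)\le \gep/3$. Setting $M=\max_j M_j$ yields, by a union bound, $\sup_N P(\|L_N(0)\|_\infty > M)\le \gep$, establishing tightness of $(L_N(0))_{N\in \N}$ in $\R^3$.

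Second, I would verify the continuity modulus condition \eqref{tension} for $(L_N)_{N\in\N}$. Using the decomposition above, for every $\gep>0$,
\begin{equation*}
P\bigl(w_{L_N}(\delta)\ge \gep\bigr)\le \sum_{j=1}^{3} P\bigl(w_{L_N^j}(\delta)\ge \gep\bigr).
\end{equation*}
Taking $\limsup_{N\to\infty}$ and then $\delta\to 0$, each term in the right-hand sum tends to $0$ by tightness of $(L_N^j)_{N\in \N}$ and Proposition \ref{tightness1}, so the same holds for the left-hand side. Proposition \ref{tightness1} then applies to $(L_N)_{N\in \N}$ and yields tightness in $(C_d^3,\cC_d^3)$.

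There is no serious obstacle here: the entire argument hinges on a convenient choice of norm on $\R^3$ making the vector-valued modulus equal to the maximum of the coordinate moduli, after which everything reduces to a union bound over three events. The same proof clearly extends without modification to any finite number $k$ of coordinates.
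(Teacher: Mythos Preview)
Your proof is correct and follows the same line as the paper's: both bound the vector-valued modulus of continuity by the coordinate moduli (the paper via $w_h(\delta)\le \sum_j w_{h_j}(\delta)$, you via $w_h(\delta)=\max_j w_{h_j}(\delta)$ under the $\ell^\infty$ norm) and then apply a union bound together with Proposition~\ref{tightness1}. The only minor wrinkle is that when you deduce $\limsup_N P(w_{L_N^j}(\delta)\ge\gep)\to 0$ from tightness of $(L_N^j)_{N\in\N}$ you are invoking the \emph{converse} of Proposition~\ref{tightness1}, which the paper states only as a sufficient condition; this direction is nonetheless standard and is part of the cited \cite[Theorem~7.3]{B}.
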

Proving this proposition is not difficult, it suffices to extend the definition of the continuity modulus in \eqref{contmod} and to observe that for every $h:=(h_1,h_2,h_3)\in C_d^3$,
$$w_h(\delta)\leq w_{h_1}(\delta)+w_{h_2}(\delta)+w_{h_3}(\delta).$$

The convergence of finite dimensional marginals is defined as in \eqref{margfi}. Remark \eqref{remconvprobponct} 
may be restated here since it suffices that for every $j\in \{1,2,3\}$ and every $\s\in [0,D]^2$ the sequence of real random variables
$(L_N^j(\s))_{N\in \N}$ converges in probability to  $S^j(\s)$ to conclude that $(L_N)_{N\in \N}$ converges for finite dimensional marginals
to $S$.

%


\section{Proof of Theorems  \ref{speedfront} and \ref{speedfrontvariant}}\label{Preuves1}
We will first display, for the rand-model the proofs of \eqref{convgN}, \eqref{convgN3}  \eqref{convgN2} and for the min-model that of \eqref{convgNmin}, \eqref{convgN3min} and of the tightness in point 4 of Theorem 
\ref{speedfrontvariant}.  Then, we will consider the stellar-model and prove \eqref{convellN}, \eqref{convellNal} and \eqref{convgNstel} by explaining where the proofs diplayed for the rand-model  have to be modified.

\subsection{Common proofs for the rand-model and the min-model}

For the case $\kzero>1$ and $\kzero=1$ the proofs for the rand-model and the min-model are exactly the same. For this reason, in these two cases, 
we assume that $(Y_i)_{i\geq 1}$ is a given sequence of random variables 
defined on $(\Omega,\cA,P)$ and taking values in $[0,D]^d$ without any assumption on its law.

\subsubsection{Case $\kzero>1$} 
\begin{remark}\label{remstatord}
{\rm
For $N\in \N$, we let $(Z^{(1)},\dots,Z^{(N)})$ be the order statistics of 
$(Z_1,\dots,Z_N)$, i.e.,   
\begin{equation}\label{statordre}
Z^{(1)}>Z^{(2)}>\dots>Z^{(N)}.
\end{equation}
For simplicity, we have omitted the $N$ dependency of each random variable in \eqref{statordre}. 
At this stage, we consider  $\xi:=(\xi_{i})_{i\geq 1}$ an i.i.d. sequence of random variables following an exponential law of parameter $1$.
For every $i\geq 1$ we set
$T_i:=\xi_1+\dots+\xi_i$. We use the equality in distribution   
\begin{equation}\label{statordrepalt}
(Z^{(1)},\dots,Z^{(N)})=_{\text{law}}\Bigg[\bigg(\frac{T_{N+1}}{T_1}\bigg)^{\frac{1}{\beta-1}},\dots,\bigg(\frac{T_{N+1}}{T_N}\bigg)^{\frac{1}{\beta-1}}\Bigg].
\end{equation}}
\end{remark}

Because of \eqref{statordrepalt}
 we observe that $N^{-\kzero}\,  h_N$ has the same law in $(C,\cC)$ as $\big[\frac{T_{N+1}}{N}\big]^{\kzero}\, \tilde h_N$ with
\begin{equation}\label{defgN2}
 \tilde h_N(\s):= \sum_{i=1}^N \Big[ \frac{1}{T_i}\Big]^{\kzero}\  \psi\bigg(\frac{v_{Y_i}(\s)}{Z_N^{(i)}}\bigg),\quad \s\in  [0,D]^d
\end{equation}
where for simplicity we have denoted by $(Z_N^{(1)},\dots,Z_N^{(N)})$ the random variables in the r.h.s. of \eqref{statordrepalt}.
Thus, we are left with proving that
\begin{equation}\label{convgN22}
\Big[\frac{T_{N+1}}{N}\Big]^{\kzero}\, \tilde h_N \Rightarrow_{N} \Bigg[\sum_{i=1}^\infty \frac{1}{T_i^{\kzero}}\Bigg]\  \ind_{[0,D]^d}.
\end{equation} 
The law of large numbers guarantees us that 
\begin{equation}\label{llnu}
\lim_{N\to \infty} \frac{T_{N+1}}{N}=1 \quad \text{for} \quad P\text{-a.e.}\quad  \omega \in \Omega,
\end{equation}
and therefore, we can drop 
the term $\big[\frac{T_{N+1}}{N}\big]^{\kzero}$ in the l.h.s. of \eqref{convgN22}. 
 At this stage, we can claim that \eqref{convgN} will be proven once we show that  
for $P$-a.e. $\omega\in \Omega$,
\begin{equation}\label{convuniffront}
\lim_{N\to \infty} \Big |\Big|  \tilde h_N-\sum_{i=1}^{\infty}\Big[\frac{1}{T_i}\Big]^{\kzero}\Big |\Big|_{\infty}=0.
\end{equation} 
We write $\tilde h_N= \bar h_N+\hat h_N$ where $\bar h_N$ (resp. $\hat h_N$) is the sum over the indices $i\in \{1,\dots, N/\log N\}$ (resp.   $i\in \{1+N/\log N,\dots, N\}$) in \eqref{defgN2}.
Consequently, \eqref{convuniffront} will be proven once we show that for $P$-a.e. $\omega\in \Omega$,
\begin{align}\label{frontconv}
&\lim_{N\to \infty} \Big |\Big|  \bar h_N-\sum_{i=1}^{\frac{N}{\log N}}\Big[\frac{1}{T_i}\Big]^{\kzero}\Big |\Big|_{\infty}=0\\
\nonumber &\lim_{N\to \infty} \sum_{i=1+\frac{N}{\log N}}^{ \infty}\Big[\frac{1}{T_i}\Big]^{\kzero}=0\\
\nonumber &\lim_{N\to \infty} ||  \hat h_N||_{\infty}=0
\end{align} 
The third convergence in \eqref{frontconv} is a straightforward consequence of the second convergence since $\psi$ is bounded to the extend that it is  continuous on $[0,\infty)$ and equals $0$ on $[1,\infty)$. 
 By \eqref{llnu}, for $P$-a.e  $\omega\in \Omega$ we can claim that  $T_i=i \,  (1+o(1))$. 
Moreover,  $\kzero>1$ and therefore $\sum_{i\geq 1} 1/(T_i)^{\kzero}$ converges for $P$-a.e. $\omega\in \Omega$
which gives the second convergence in \eqref{frontconv}. It remains to prove the  first convergence in \eqref{frontconv}. To that aim we recall that $v_{\y}(\s)$ is bounded above by $dD/2$ (see e.g. Lemma \ref{lem:loivys}) and we observe that 
\begin{equation}\label{conv3concl}
 \Big |\Big|  \bar h_N-\sum_{i=1}^{ \frac{N}{\log N}}\Big[\frac{1}{T_i}\Big]^{\kzero}\Big |\Big|_{\infty}\leq  \sup_{u\in \big[0,\frac{d D}{2}\big]}
 \bigg|\psi\bigg(u \bigg[\frac{T_{\frac{N}{\log N}}}{T_{N+1}}\bigg]^{\kzero}\bigg)-1\bigg|\quad  \sum_{i=1}^{\infty} \Big[\frac{1}{T_i}\Big]^{\kzero}.
\end{equation}
We conclude that the r.h.s. in \eqref{conv3concl} converges for $P$-a.e. $\omega\in \Omega$ towards $0$ as $N\to \infty$ since the sum in the r.h.s. is almost surely finite, $\psi$ is continuous at $0$ and for $P$-a.e. $\omega\in \Omega$ the convergence 
$\lim_{N\to \infty}T_{\frac{N}{\log N}}/T_{N+1}=0$ holds true by \eqref{llnu}. This completes the proof for the rand-model and the min-model of Theorem \ref{speedfront} in the case $\kzero>1$.

\subsubsection{Case $\kzero=1$}
We use $\tilde h_N$ again in \eqref{defgN2} so that the proof consists in showing that 
\begin{equation}\label{convgN222}
\frac{1}{\log N}\  \tilde h_N \Rightarrow_{N} \ind_{[0,D]^d}.
\end{equation} 
At this stage, we note that by \eqref{llnu} and by equivalence of positive diverging sums, it holds that for $P$-a.e. $\omega\in \Omega$, 
\begin{equation}\label{equivserielog}
\sum_{i=1}^k \frac{1}{T_i}=\log k\,  (1+o(1))
\end{equation}
so that the proof of that case will be complete once we show that for $P$-a.e. $\omega\in \Omega$,
\begin{equation}\label{convuniffront2}
\lim_{N\to \infty} \frac{1}{\log N}\Big |\Big|  \tilde h_N-\sum_{i=1}^{N} \frac{1}{T_i}\Big |\Big|_{\infty}=0.
\end{equation} 
To prove \eqref{convuniffront2}, we adapt \eqref{frontconv} to the present framework, that is the same three convergences  
have to be proven except that they are divided by $\log N$ and that the summation in the seconde equality is restricted to 
$i\in \{1+N/\log N,\dots, N\}$. The proof of the first convergence  is a consequence of  \eqref{conv3concl} except that the summation in the r.h.s. 
is bounded because it is divided by $\log N$. The second and the third convergences are consequences of \eqref{equivserielog} since for $P$-a.e. $\omega\in \Omega$
$$\sum_{i=1+\frac{N}{\log N}}^{N}\ \frac{1}{T_i}=o(\log N),$$
and of the boundedness of $\psi$ for the third convergence.
This completes the proof of the case $\kzero=1$ for the rand-model and the min-model.

\subsection{Case $\kzero<1$ }
In this case, we prove \eqref{convgN2} that is the convergence in distribution of $(N^{-1} h_N)_{N\in \N}$ for the rand-model. For the min-model, we only prove the tension 
of $(N^{-1} h^m_N)_{N\in \N}$. Thus, the first step below covers both the rand and the min model whereas the second step is for the rand-model only.

 We will apply the strategy displayed in Section \ref{tool-for-distr-conv}. To that aim we write
\begin{equation}\label{defgN3}
 h_N(\s):= \sum_{i=1}^N Z_i^{\alpha-d} \psi\bigg(\frac{v_{Y_i}(\s)}{Z_i}\bigg),\quad \s\in [0,D]^d.
\end{equation}
The first step consists in proving that the sequence of random continuous processes $(\frac{1}{N}\, h_N)_{N\geq 1}$ is tight in $(C_d,\cC_d)$. To that aim, we will prove that 
\begin{equation}\label{tension} 
\lim_{\delta \to 0} \, \lim_{N\to \infty}  P\Big( \frac{1}{N} w_{h_N}(\delta)\geq \gep\Big)=0
\end{equation}
where, for $h\in C_{[0,D]^d}$ and $\delta>0$, 
$$ w_{h}(\delta):=\sup_{\s, \x\in [0,D]^d\colon\; |\s-\x|_1 \leq \delta}\quad  |h(\s)-h(\x)|.$$
We set $G:[1,\infty)\times [0, dD/2]\mapsto \R$ as $G(z,u)= \psi\big(\frac{u}{z}\big)$. We recall that 
$\psi$ is continuous on $[0,\infty)$, that  $\psi(0)=1$ and that $\psi(v)=0$ for $v\in [1,\infty)$. Thus, $G$ is continuous and 
$$\lim_{|z|\to \infty}\ \sup_{u\in [0,dD/2]} \big | G(z,u)-1|=0.$$
As a consequence, $G$ is uniformly continuous on $[1,\infty)\times [0,d D/2]$. Moreover, $\forall \y,\s, \x\in [0,D]^d$ it holds that 
$|v_{\y}(\x)-v_{\y}(\s)|\leq |\x-\s|_1$ so that if $|\x-\s|_1\leq \delta$
\begin{align}
\big |h_N(\s)-h_N(\x)\big| &\leq \sum_{i=1}^{N} Z_i^{\alpha-d}\ \big | G(Z_i, v_{Y_i}(\s))-G(Z_i, v_{Y_i}(\x)) \big |\\
&\leq \sum_{i=1}^{N} Z_i^{\alpha-d}\ w_G(\delta) .
\end{align}  
Consequently, for all $\gep >0$;
\begin{align}\label{derlitens}
P\Big[\frac{1}{N}  w_{h_N}(\delta)\geq \gep\Big]\leq P\Big[ w_G(\delta)  \frac{1}{N} \sum_{i=1}^N Z_i^{\alpha-1}\geq \gep\Big] .
\end{align}  
Since $Z_1^{\alpha-d}\in L^1$, the law of large number guarantees us that for 
$P$-a.e. $\omega\in \Omega$ it holds that  $ \lim_{N\to \infty} \frac{1}{N} \sum_{i=1}^N Z_i^{\alpha-1}=E(Z_1^{\alpha-1})$.
Thus, with the help of the uniform continuity of $G$ we can pick $\delta$ such that $w_G(\delta) E(Z_1^{\alpha-1})<\gep$
which is sufficient to obtain \eqref{tension}.

It remains to check that $(\frac{1}{N} h_N(0))_{N\geq 1}$ is tensed as a sequence of real random variables. To that aim we pick $\eta>0$  and we write
for $a>0$
\begin{equation}\label{tensionsec}
P\Big[\frac1N h_N(0)\geq a\Big]\leq P\Big[\frac1N \sum_{i=1}^N Z_i^{\alpha-1}  ||\psi||_{\infty}\geq a\Big].
\end{equation}
Using the law of large number again, it suffices to chose $a>E(Z_1^{\alpha-1})\, ||\psi ||_\infty$ to make sure that the r.h.s. in \eqref{tension}
vanished as $N\to \infty$. As a consequence, one can choose $a$ large enough in such a way that the l.h.s. in \eqref{tension} is bounded above by $\eta$
uniformly in $N\in \N$. This completes the proof of the first step.

For the second step of the proof, we consider only the rand-model and therefore $(Y_i)_{i\in \N}$ is i.i.d. and $Y_1$ is uniform on $[0,D]^d$.  We pick $k\in \N$ and 
$(\s_1,\dots,\s_k)\in ([0,D]^d)^k$. We recall \eqref{defgN3} and we note that Lemma \ref{lem:loivys} guarantees us that for every $\s\in [0,D]^d$
the sequence of random variables $\big(Z_i^{\alpha-d}
\psi\big(\frac{v_{Y_i}(\s)}{Z_i}\big)\big)_{i\in \N}$ is i.i.d.,
integrable and its law does not depend on $\s$. Consequently, the
random vector $(h_N(\s_1),\dots,h_N(\s_k))_{N\in \N}$ converges for
$P$-a.e. $\omega\in \Omega$ towards the constant vector whose
coordinates all equal $\gamma_{\alpha,\beta,D}$. This is sufficient to
complete the second step and the proof. 

\subsection{Proofs for the stellar model}

For $\kzero>1$, the proof of \eqref{convellN} is similar to that of \eqref{convgN} for the rand-model. The only difference is that, for every $i\in \N$, the random variable $X_i(\s)$ is multiplied by  the $3$-dimensional random vector
$\Theta_i$.  Thus, $\tilde h_N$ becomes
$$\sum_{i=1}^N \Theta_i \Big[ \frac{1}{T_i}\Big]^{\kzero}\  \psi\bigg(\frac{v_{Y_i}(\s)}{Z_N^{(i)}}\bigg),\quad \s\in  [0,D]^2.$$
Moreover, by definition (recall \eqref{spher}), we can claim that there exists a $c>0$ such that $P(|\Theta_1|_1\leq c)=1$ and $E(\Theta_1)=0$.  Then,  we replace $\sum_{i=1}^\infty 1/T_i^\kzero$ by  $\sum_{i=1}^\infty \Theta_i/T_i^\kzero$ and the proof 
(\ref{defgN2}--\ref{conv3concl}) is the same. 

For $\kzero<1$, the proof for the rand-model can also be adapted without difficulty to the stellar case. The function $h_N$ is modified into
$$\sum_{i=1}^{N} \Theta_i\, Z_i^{\alpha-d}\,  \psi\bigg(\frac{v_{Y_i}(\s)}{Z_i}\bigg).$$
The proof of the tightness of  $(h_N)_{N\in \N}$
is straightforwardly adapted from (\ref{defgN3}--\ref{tensionsec}). Finally, the convergence of the finite dimensional marginals is 
obtained by applying the law of large numbers to the i.i.d. sequence of centered random vectors 
$\big(\Theta_i\,  Z_i^{\alpha-d} \psi\big(\frac{v_{Y_i}(\s)}{Z_i}\big)\big)_{i\in \N}$ which is integrable since $Z_1^{\alpha-d}\in L^1$ when $\xi<1$
and centered since $E(\Theta_1)=0$.

Finally, the case that requires some attention is $\kzero=1$, i.e., \eqref{convellNal}.
Recall that for $j\in \N$, $\Theta_j:=(\Theta_j^1,\Theta_j^2,\Theta_j^3)$ is a 3-dimensional vector whose coordinates are centered and bounded.
We 
recall \eqref{defgN2} and modify it into 
\begin{equation}\label{deftilhn}
 \tilde h_N(\s):= \sum_{i=1}^N  \frac{\Theta_i }{T_i}\  \psi\bigg(\frac{v_{Y_i}(\s)}{Z_N^{(i)}}\bigg),\quad \s\in  [0,D]^2.
 \end{equation}
 We will obtain \eqref{convellNal} as follows. First, we will prove the convergence of the finite dimensional marginals of $(\tilde h_N)_{N\in \N}$ and then, we will prove its tightness. 
 
 We observe also that for $j\in \{1,2,3\}$, the sequence  $(u^{\,j}_N)_{N\in \N}:=(\sum_{i=3}^N \Theta^j_i/T_i)_{N\in \N}$ is a martingale bounded in $L^2$ by a constant time $\sum_{i=3}^\infty E(T_i^{-2})$ which is finite by Lemma \ref{L2}. 
As a consequence we denote by $\sum_{i=3}^\infty \Theta^j_i/T_i$ the almost sure limit of $(u^j_N)_{N\geq 3}$.  This also implies that the sequence of random vectors $(\sum_{i=1}^N \Theta_i/T_i)_{N\in \N}$ converges almost surely 
and we denote by  $\sum_{i=1}^\infty \Theta_i/T_i$ its limit.

   For the convergence of the finite dimensional marginals of $(\tilde h_N)_{N\in \N}$, according to Remark \ref{remconvprobponct}, it is sufficient to prove that, for every  $\s\in [0,D]^2$, the sequence $(\tilde h_N(\s))_{N\in \N}$ converges in probability towards 
$\sum_{i=1}^\infty \Theta_i/T_i$.
It will be sufficient to obtain the following convergence in probability:
\begin{equation}\label{limfinitedimmarg}
\lim_{N\to \infty} \tilde h_N(\s)-\sum_{i=1}^{\frac{N}{\log N}} \frac{\Theta_i}{T_i}=0_3.
\end{equation}
Since it is obvious that $P$-almost surely
\begin{equation}\label{limfinitedimmarginal}
\lim_{N\to \infty} \sum_{i=1}^2 \frac{\Theta_i }{T_i}\  \psi\bigg(\frac{v_{Y_i}(\s)}{Z_N^{(i)}}\bigg)-\sum_{i=1}^{2} \frac{\Theta_i}{T_i}=0_3,
\end{equation}
the proof will be complete once we show that
for  $j\in \{1,2,3\}$, the   random sequences 
$\big( \bar h_N^j(\s)-\sum_{i=3}^{N/\log N} \Theta_i^j/T_i)_{N\in \N}$ and $\big( \hat h_N^j(\s)\big)_{N\in \N}$ converge in $L^2$-norm  towards $0$ with
\begin{align}\label{defqunbart}
\bar h_N^j(\s)&=\sum_{i=3}^{\frac{N}{\log N}} \frac{\Theta^j_i }{T_i}\  \psi\bigg(\frac{v_{Y_i}( \s)}{Z_N^{(i)}}\bigg),\\
\nonumber \hat h_N^j(\s)&=\sum_{i=\frac{N}{\log N}+1}^{N} \frac{\Theta^j_i }{T_i}\  \psi\bigg(\frac{v_{Y_i}( \s)}{Z_N^{(i)}}\bigg).
\end{align}
We observe that 
\begin{equation}\label{estimsupl2}
v_N:=\bar h_N^j(\s)-\sum_{i=3}^{\frac{N}{\log N}}  \frac{\Theta_i^j}{T_i} =\sum_{i=3}^{\frac{N}{\log N}} \frac{\Theta_i^j }{T_i} \bigg[\psi\bigg(\frac{v_{Y_i}(\s)}{Z_N^{(i)}}\bigg)-1\bigg],
\end{equation}
and since $\Theta_1^j$ is centered we write 
\begin{equation}\label{upperboundL2iprocandliml}
||v_N||_{L^2}^2:=\sum_{i=3}^{\frac{N}{\log N}} E\big[(\Theta_1^j)^2\big]\ E\bigg[ \frac{1}{T_i^2} \bigg[\psi\bigg(\frac{v_{Y_i}(\s)}{Z_N^{(i)}}\bigg)-1\bigg]^2 \bigg],
\end{equation}
and 
\begin{equation}\label{hnorm2}
||\hat h_N^j(\s))||_{L^2}^2:=\sum_{i=1+\frac{N}{\log N}}^N E\big[(\Theta^j_1)^2\big]\ E\bigg[ \frac{1}{T_i^2} \bigg[\psi\bigg(\frac{v_{Y_i}(\s)}{Z_N^{(i)}}\bigg)\bigg]^2 \bigg].
\end{equation}
The convergence towards $0$ in \eqref{hnorm2} is a straightforward consequence of \eqref{equivgamma} combined with the fact that $E\big[(\Theta^j_1)^2\big]<\infty$ and that 
$\psi$ is bounded. For every $i\geq 3$, we recall that for $P$-almost every $\omega\in \Omega$ we have  $\lim_{N\to \infty} Z_N^{(i)}\geq \lim_{N\to \infty}  Z_N^{(N/\log N)}=\infty$ and we apply the dominated convergence theorem to assert that 
$$
\lim_{N\to \infty} E\bigg[ \frac{1}{T_i^2} \bigg[\psi\bigg(\frac{v_{Y_i}(\s)}{Z_N^{(i)}}\bigg)-1\bigg]^2 \bigg]=0$$
and it suffices to use that $\sum_{i\geq 3} E(1/T_i^2)<\infty$ to apply the dominated convergence again and 
prove that \eqref{upperboundL2iprocandliml} converges towards $0$ as well. This completes the proof of the convergence 
for finite dimensional marginals.

It remains to explore the tightness of $(\tilde h_N)_{N\in \N}$. It suffices to check that its first coordinate is 
tight, that is we consider the formula in \eqref{deftilhn} with $\Theta_i^1$ instead of $\Theta_i$. We note, with the help of \eqref{formexainteti} 
that $1/T_i \in L^4$ for $i\geq 5$. We split $\tilde h_N$ into 
\begin{equation}
\tilde h_N(\x)=a_N(\x)+b_N(\x):=\sum_{i=1}^4 \frac{\Theta_i^1}{T_i} \ \psi\bigg(\frac{v_{Y_i}(\x)}{Z_N^{(i)}}\bigg)+
\sum_{i=5}^N \frac{\Theta_i^1}{T_i} \ \psi\bigg(\frac{v_{Y_i}(\x)}{Z_N^{(i)}}\bigg), \x\in [0,D]^2.
\end{equation}
The fact that $(a_N)_{N\in \N}$ is a tight sequence of random functions is straightforward. Therefore, we focus on $(b_N)_{N\in \N}$ and we use Proposition \ref{kolmog}
with $p=4$ and $\gamma=2$. We recall that  for every $i\in \{1,\dots,N\}$ the lower bound $Z_N^{(i)}\geq 1$ holds true and also that $\forall \y,\s, \x\in [0,D]^3$ it holds that 
$|v_{\y}(\x)-v_{\y}(\s)|\leq |\x-\s|_1$. With the help of \ref{control diffpsi}  we may write that there exists $C>0$ such that for every
$\x,\s\in [0,D]^2$,
\begin{align}\label{prooftightkolmog}
\nonumber\E\Big[\big| b_N(\x)-b_N(\s) \big|^4\Big]&= \E\bigg[\bigg( \sum_{i=5}^N \frac{\Theta_i^1}{T_i} \Big[\psi\Big(\frac{v_{Y_i}(\x)}{Z_N^{(i)}}\Big)-\psi\Big(\frac{v_{Y_i}(\s)}{Z_N^{(i)}}\Big)\Big]\bigg)^4\bigg]\\
\nonumber &\leq C\, \E\big[(\Theta_1^1)^4\big]\,  |\x-\s|_1^4\,  \sum_{i=5}^\infty\E\bigg[ \frac{1}{T_i^4} \bigg]+12 C\,  |\x-\s|_1^4\ \sum_{5<i<j\leq N} \E\bigg[ \frac{(\Theta_i^1)^2 (\Theta_j^1)^2}{T_i^2 T_j^2} \bigg]\\
& \leq C |\x-\s|_1^4\ \E\big[(\Theta_1^1)^4\big]  \bigg[   \sum_{i=5}^\infty\E\bigg[ \frac{1}{T_i^4}\bigg]+ 12 \, \bigg(\sum_{i=5}^\infty\E\bigg[ \frac{1}{T_i^4}\bigg]^{1/2}\bigg)^2 \bigg]
\end{align}
where we have used that $(\Theta^1_i)_{i\geq 1}$ is  independent of $(T_i)_{i\in \N}$ and the Cauchy-Shwartz inequality. It remain to 
use Lemma \ref{L2} to assert that both infinite sums in the r.h.s. in \eqref{prooftightkolmog} are finite. This completes the proof.

%

\section{Proof of Theorem \ref{mainres} }\label{Preuves}
\begin{remark}
{\rm
We recall Remark \ref{remstatord} and in particular \eqref{statordre}. 
We
set $Y^{(1)}, \dots ,Y^{(N)}$ the random permutation of $(Y_1,\dots,Y_N)$ such that for every $(i,j)\in \{1,\dots,N\}^2$ we have  $Y^{(i)}=Y_j$ if and only if $Z^{(i)}=Z_j$. It is easily checked that
 $(Y^{(1)}, \dots ,Y^{(N)})$ is an i.i.d. family of random variables following the Uniform law on $[0,D]^d$ and remaining independent of the order statistics $(Z^{(1)},Z^{(2)},\dots,Z^{(N)})$.
Thus, in order to ease the notations and since we are only looking for convergence in law, we will use $(Y_1,\dots,Y_N)$ rather than
$(Y^{(1)}, \dots ,Y^{(N)})$. Note also that the same subtlety arises when working with the stellar model. Thus, we can keep working with  $(\Theta_1,\dots,\Theta_N)$ an i.i.d. family of  random vectors independent of both $Z$ and $Y$
even after reordering $(Z_1,\dots,Z_N)$ into    $(Z^{(1)},Z^{(2)},\dots,Z^{(N)})$.}

\end{remark}

\subsection{The very heavy tailed case: proof of \eqref{premconv}}\label{cas-non-L2}
%

Assume $\kappa>\undemi$ and  $d\in \{1,2\}$. Fix  $K\in \N$ and split $f_N$ into two sums, i.e.,  
$f_N=f_N^+ +f_N^-$ where
\begin{align}\label{deffN}
f_N^+(\s)&=\sum_{i\leq N\colon Z_i>KD} Z_i^{\alpha-d}\  \bigg[  \psi\Big(\frac{v_{Y_i}(\s)}{Z_i}\Big)- \psi\Big(\frac{v_{Y_i}(0)}{Z_i}\Big)\bigg] \\
\nonumber f_N^-(\s)&=\sum_{i\leq N\colon Z_i\leq KD} Z_i^{\alpha-d}\   \bigg[  \psi\Big(\frac{v_{Y_i}(\s)}{Z_i}\Big)-  \psi\Big(\frac{v_{Y_i}(0)}{Z_i}\Big)\bigg], \quad \s\in [0,D]^d.
\end{align} 


With the following Lemmas we identify the limit in distribution of $f_N^+$ and $f_N^-$ properly rescaled.
Lemma \ref{resultpart1} and \ref{resultpart2} will be proven in Sections \ref{prlem1} and \ref{prlem2}, respectively.  
We recall the definition of $\mu$ in Section \ref{descrmu}.

\begin{lemma}\label{resultpart1}
If $\kappa>\undemi$, then
$$N^{-\kappa}\ f_N^+\Rightarrow_{N} \mu_d.$$
\end{lemma}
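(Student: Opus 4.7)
The strategy is to replace each summand in $f_N^+$ by its leading-order Taylor approximation and then exploit the order statistic representation of $(Z_i)_{1\le i\le N}$. Since $\psi \in \cH^n$ with $\psi^{(k)}(0)=0$ for $1 \le k \le n-1$, Taylor's expansion at $0$ gives, for $u \in [0,1]$,
\[
\psi(u) = 1 + \frac{\psi^{(n)}(0)}{n!}\, u^n + r(u), \qquad |r(u)| \le C u^{n+1},
\]
where the control on $r$ comes from the Lipschitz continuity of $\psi^{(n-1)}$ (cf.\ Lemma \ref{lem:hng}). Applied with $u_\s = v_{Y_i}(\s)/Z_i$ and $u_0 = v_{Y_i}(0)/Z_i$, which are both $\le 1/K < 1$ whenever $Z_i > KD$ (since $v_y(\x) \le dD/2$), this yields a pointwise decomposition
\[
Z_i^{\alpha-d}\bigl[\psi(u_\s) - \psi(u_0)\bigr] = Z_i^{\alpha-d-n}\, G_i(\s) + \rho_i(\s), \qquad |\rho_i(\s)| \le C' Z_i^{\alpha-d-n-1}.
\]

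The plan is then to analyse the leading sum $\widetilde f_N^+(\s) := \sum_{i:\,Z_i>KD} Z_i^{\alpha-d-n} G_i(\s)$, show it already carries the limit, and check that the collective contribution of the remainders $\rho_i$ is negligible after normalisation by $N^{-\kappa}$. Using Remark \ref{remstatord} and the fact that reordering $(Y_i)$ according to the order statistics of $(Z_i)$ leaves their joint law invariant, $N^{-\kappa}\widetilde f_N^+$ is equal in distribution to
\[
\left(\frac{T_{N+1}}{N}\right)^{\!\kappa} \sum_{i\,:\, T_i < T_{N+1}/(KD)^{\beta-1}} \frac{G_i(\s)}{T_i^\kappa}.
\]
By the strong law of large numbers, $T_{N+1}/N \to 1$ almost surely, and for each fixed $i$ the event $\{T_i < T_{N+1}/(KD)^{\beta-1}\}$ holds for all $N$ large. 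Proposition \ref{convgamma2N} supplies the almost-sure and $L^2$ convergence of the tail $\sum_{i>n_1} G_i(\s)/T_i^\kappa$ and hence, by a dominated-convergence argument, the convergence in probability of $N^{-\kappa}\widetilde f_N^+(\s)$ to $\gamma_1(\s)+\gamma_{2,\infty}(\s)$. Applying this jointly at $\s_1,\dots,\s_k$ yields the finite-dimensional convergence of $N^{-\kappa}\widetilde f_N^+$ to $\mu_d$.

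To lift the conclusion from $\widetilde f_N^+$ to $f_N^+$, I would use the same order-statistics substitution to rewrite the remainder term, noting that $Z_i^{\alpha-d-n-1}$ corresponds to $(T_{N+1}/T_i)^{\kappa'}$ with $\kappa' := \kappa - 1/(\beta-1) < \kappa$; a direct estimate on $N^{-\kappa}\sum_{i} (T_{N+1}/T_i)^{\kappa'}$ restricted to the order-statistic indices then shows it is $o_P(1)$. Tightness of $(N^{-\kappa} f_N^+)_{N\ge 1}$ in $(C_d,\cC_d)$ would be established through Proposition \ref{kolmog} applied with $p=4$, exploiting the $L^4$ integrability $T_i^{-\kappa}\in L^4$ available for $i>4\kappa$ via Lemma \ref{L2}, together with the Lipschitz bound $|v_{Y_i}(\s)-v_{Y_i}(\x)|\le|\s-\x|_1$; this is precisely the content of Lemma \ref{lem:tkol1} announced in Section \ref{descrmu}. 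Combining the two ingredients through Theorem \ref{convlawpro} concludes the proof.

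The most delicate point is the quantitative control of the Taylor remainders on the scale $N^\kappa$: one must guarantee that the error introduced by summing $\rho_i$ over a random set of indices of cardinality of order $N$ is uniformly small in $\s$, even though the condition $Z_i>KD$ concerns the extreme order statistics. The $K$-truncation makes the Taylor expansion valid, but the control of $\sum_i Z_i^{\alpha-d-n-1}$ relies crucially on the precise exponent gap $\kappa-\kappa' = 1/(\beta-1)>0$; balancing this remainder with the $L^4$ bounds needed for the Kolmogorov tightness criterion is where all the assumptions on $\psi$, $\alpha$, $\beta$, and $d$ will need to be carefully combined.
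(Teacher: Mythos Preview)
Your overall strategy --- order-statistics representation, Taylor expansion, Kolmogorov tightness via Lemma~\ref{lem:tkol1} --- is indeed the paper's, but two steps in your remainder analysis do not go through as stated.

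\smallskip

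\textbf{(i) The remainder bound is too strong.} You claim $|r(u)|\le C u^{n+1}$, which would require $\psi^{(n)}$ to be Lipschitz. The class $\cH^n$ only assumes that $\psi^{(n-1)}$ is Lipschitz and that $\psi^{(n)}(0)$ exists; Lemma~\ref{lem:hng} therefore only delivers $\psi(u)=1+\tfrac{\psi^{(n)}(0)}{n!}u^n(1+\epsilon(u))$ with $\epsilon(u)\to 0$, i.e.\ $r(u)=o(u^n)$, not $O(u^{n+1})$. Your exponent $\kappa'=\kappa-1/(\beta-1)$ is unavailable.

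\smallskip

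\textbf{(ii) The ``direct estimate'' on the remainder sum fails for $\tfrac12<\kappa\le 1$.} Even granting (i), a triangle-inequality bound gives
\[
N^{-\kappa}\sum_{i\le\tau_N}\Big(\tfrac{T_{N+1}}{T_i}\Big)^{\kappa'}
\;\asymp\; N^{\kappa'-\kappa}\sum_{i\le cN} i^{-\kappa'}\;\asymp\; N^{1-\kappa}
\]
whenever $\kappa'<1$, and this does not tend to $0$. You would need to exploit the centering of $\rho_i(\s)$ (conditionally on the $Z$'s) and argue in $L^2$ instead.

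\smallskip

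The paper avoids both issues by a further index split at $\sqrt N$: writing $f_N^+=f_N^1+f_N^2+f_N^3$ with index ranges $\{1,\dots,n_1\}$, $\{n_1+1,\dots,\sqrt N\}$, $\{\sqrt N+1,\dots,\tau_N\}$. For $f_N^2$ every $Z_N^{(i)}\ge Z_N^{(\sqrt N)}\to\infty$ a.s., so the qualitative $\epsilon\to 0$ suffices via dominated convergence in $L^2$ (see \eqref{norml2kg}). For $f_N^3$ no Taylor expansion is used at all: one bounds $|K_i(\s)|\le c$ uniformly (from \eqref{inegKi}), uses centering to get $\|\hat f_N^3(\s)\|_2^2\le c\sum_{i>\sqrt N}E[T_i^{-2\kappa}]\to 0$. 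This is precisely Lemma~\ref{lem:conv23}, which you cite for tightness but whose role in the finite-dimensional convergence you have bypassed.
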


\begin{lemma}\label{resultpart2}
 If $\kappa>\undemi$,  then the
 sequence $(N^{-\frac12} f_N^-)_{N\in\N}$ converges in distribution. Therefore,

$$N^{-\kappa}\  f_N^-\Rightarrow_{N} 0.$$
\end{lemma}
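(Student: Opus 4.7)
The strategy is to recognize $f_N^-$ as a sum of i.i.d., centered, uniformly bounded random functions, apply a multivariate central limit theorem to get finite-dimensional convergence of $N^{-1/2}f_N^-$, then establish tightness via the Kolmogorov criterion of Proposition \ref{kolmog}, and finally deduce the $N^{-\kappa}f_N^-\Rightarrow 0$ statement by Slutsky, using $N^{-\kappa}=N^{1/2-\kappa}\cdot N^{-1/2}$ with $N^{1/2-\kappa}\to 0$ since $\kappa>\undemi$.

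First I would write $f_N^-(\s)=\sum_{i=1}^N W_i(\s)$ with
\begin{equation*}
W_i(\s):= Z_i^{\alpha-d}\Big[\psi\Big(\tfrac{v_{Y_i}(\s)}{Z_i}\Big)-\psi\Big(\tfrac{v_{Y_i}(0)}{Z_i}\Big)\Big]\,\ind_{\{Z_i\le KD\}},
\end{equation*}
and note the $(W_i(\cdot))_{i\ge 1}$ are i.i.d. random functions. The truncation keeps $Z_i\in[1,KD]$, so $Z_i^{\alpha-d}$ stays bounded, $\psi$ is bounded, and each $W_i(\s)$ is in $L^p$ for every $p\ge 1$. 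Moreover $W_i(\s)$ is centered: since $Y_i$ is uniform on the torus $[0,D]^d$, Lemma \ref{lem:loivys} gives $v_{Y_i}(\s)\stackrel{law}{=}v_{Y_i}(0)$, hence $E[\psi(v_{Y_i}(\s)/Z_i)\mid Z_i]=E[\psi(v_{Y_i}(0)/Z_i)\mid Z_i]$. By the multivariate CLT applied to the vector $(W_1(\s_1),\dots,W_1(\s_k))$, the sequence $N^{-1/2}f_N^-$ converges in finite-dimensional distribution to a centered Gaussian process $G^-$ with covariance $r^-(\s,\x)=\text{Cov}(W_1(\s),W_1(\x))$.

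Next I would establish tightness of $(N^{-1/2}f_N^-)_N$ via Proposition \ref{kolmog}. Since $\psi\in\cH^n$ with $n\ge 1$ is Lipschitz on $[0,\infty)$ with some constant $L$ (it is Lipschitz on $[0,1]$ by \eqref{eq:psihol}, vanishes on $[1,\infty)$ and is continuous at $1$), and since $|v_{Y_i}(\x)-v_{Y_i}(\s)|\le |\x-\s|_1$, one obtains the pointwise bound
\begin{equation*}
|W_i(\x)-W_i(\s)|\,\le\, L\,|\x-\s|_1\,Z_i^{\alpha-d-1}\,\ind_{\{Z_i\le KD\}}.
\end{equation*}
Because $Z_i$ is truncated, the random variable on the right is bounded, hence in every $L^p$. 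For $d=1$ I would take $p=2$, $\gamma=1$ in Proposition \ref{kolmog}: using that the $W_i(\x)-W_i(\s)$ are i.i.d. and centered,
\begin{equation*}
E\Big[\big|N^{-1/2}(f_N^-(\x)-f_N^-(\s))\big|^2\Big]\,=\,E\big[(W_1(\x)-W_1(\s))^2\big]\,\le\, C\,|\x-\s|_1^2,
\end{equation*}
which gives the required bound with $d+\gamma=2$. For $d=2$ I would take $p=4$, $\gamma=2$: from the i.i.d.\ centered 4-th moment identity $E[S_N^4]=NE[X_1^4]+3N(N-1)E[X_1^2]^2$, the previous estimates yield
\begin{equation*}
E\Big[\big|N^{-1/2}(f_N^-(\x)-f_N^-(\s))\big|^4\Big]\,\le\, C\,|\x-\s|_1^4,
\end{equation*}
matching $d+\gamma=4$. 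Since $f_N^-(0)=0$ the tightness of the sequence at $0$ is trivial, so Proposition \ref{kolmog} applies.

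Combining the finite-dimensional convergence with tightness, Theorem \ref{convlawpro} yields $N^{-1/2}f_N^-\Rightarrow_N G^-$ in $(C_d,\cC_d)$, which proves the first claim. For the second claim, write $N^{-\kappa}f_N^-=N^{1/2-\kappa}\cdot N^{-1/2}f_N^-$; since $\kappa>\undemi$, the deterministic factor $N^{1/2-\kappa}$ tends to $0$, and a direct application of Slutsky's lemma (using that $(N^{-1/2}f_N^-)_N$ is tight) gives $N^{-\kappa}f_N^-\Rightarrow_N 0$. The main technical point is the Kolmogorov moment bound in dimension $2$, which is essentially routine thanks to the $Z_i\le KD$ truncation; the truncation is what allows the $L^p$ estimates to hold uniformly in $N$ without any constraint on the tail parameter $\beta$.
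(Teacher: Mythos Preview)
Your proposal is correct and follows essentially the same approach as the paper: both establish tightness of $(N^{-1/2}f_N^-)_{N\in\N}$ via the Kolmogorov criterion (Proposition \ref{kolmog}), using the second moment for $d=1$ and the fourth moment for $d=2$, exploiting that the truncation $Z_i\le KD$ makes the summands bounded, i.i.d.\ and centered, and then conclude by Slutsky. The only notable difference is that you additionally supply the multivariate CLT step to prove the first sentence of the lemma (actual convergence in distribution of $N^{-1/2}f_N^-$), whereas the paper's proof only establishes tightness, which already suffices for the ``Therefore'' conclusion; your version is thus slightly more complete.
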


The set $(C_d, ||\cdot ||_\infty)$ is Polish which allows us to apply Slutsky's Lemma. Thus, the convergence \ref{premconv} is a  straightforward 
consequence of Lemmas \ref{resultpart1} and \ref{resultpart2} above.

\subsubsection{Convergence  of $N^{-\kappa} f_N^+$: proof of Lemma \ref{resultpart1}}\label{prlem1}


We set
$$\tau_N:=\max\{i\geq 1\colon\, Z^{(i)}\geq K D\},$$
and we recall the definition of $n_1$ below \eqref{defgamma12}. We spit $f_N^+$ into 
$f_N^1+f_N^2+f_N^3$ where, for $s\in [0,D]$,
\begin{align}\label{deffN}
\nonumber f_N^1(\s)&=\sum_{i=1}^{n_1} \big[Z^{(i)}\big]^{\alpha-d}\  \bigg[ \psi\Big(\frac{v_{Y_{i}}(\s)}{Z^{(i)}}\Big)-  \psi\Big(\frac{v_{Y_i}(0)}{Z^{(i)}}\Big)\bigg] \\
f_N^2(\s)&=\sum_{i=n_1+1}^{\sqrt{N}} \big[Z^{(i)}\big]^{\alpha-d}\  \bigg[ \psi\Big(\frac{v_{Y_{i}}(\s)}{Z^{(i)}}\Big)-  \psi\Big(\frac{v_{Y_i}(0)}{Z^{(i)}}\Big)\bigg] \\
\nonumber f_N^3(\s)&=\sum_{i=\sqrt{N}+1}^{\tau_N} \big[Z^{(i)}\big]^{\alpha-d}\   \bigg[  \psi\Big(\frac{v_{Y_i}(\s)}{Z^{(i)}}\Big)-  \psi\Big(\frac{v_{Y_i}(0)}{Z^{(i)}}\Big)\bigg].
\end{align}

Therefore, we may, without changing the law of the triplet of processes $(f_N^1, f_N^2, f_N^3)$, substitute to $(Z^{(1)},\dots,Z^{(N)})$
the random variables defined in the r.h.s. of \eqref{statordrepalt} that are (for simplicity) denoted by  $(Z_N^{(1)},\dots,Z_N^{(N)})$. 
Thus, we set
\begin{align}\label{defKi}
K_{i}(\s):&=\big[Z^{(i)}_N\big]^{n}\, \bigg[ \psi\bigg(\frac{v_{Y_i}(\s)}{Z^{(i)}_N}\bigg)-\psi\bigg(\frac{v_{Y_i}(0)}{Z_N^{(i)}}\bigg)\bigg]
\end{align}
in such a way that for $\s\in [0,D]^d$ and $j\in \{1,2,3\}$,
\begin{align}\label{altrepfn}
N^{-\kappa}\, f_N^j(\s)&
=\, \Big(\frac{T_{N+1}}{N}\Big)^\kappa\, \sum_{i\in A_N^j}\frac{K_{i}(\s)}{T_i^\kappa},
\end{align}
with $A_N^1=\{1,\dots,n_1\}$,  $A_N^2:=\{n_1+1,\dots,\sqrt{N}\}$  and
$A_N^3:=\{\sqrt{N}+1,\dots,\tau_N\}$.
By Lemma~\ref{lem:loivys}, we know that when $Y$ is uniform on $[0,D]^d$,  the law of $v_Y(s)$ does not depend on $s\in [0,D]^d$. Moreover, for $i\in \N$ the random variables $Y_i$ and $Z_N^{(i)}$ are independent by assumption. Therefore,
the random variables
$(K_i(s))_{s\in[0,D]^d}$ are centered.

\begin{remark}\label{llnea}
{\rm
The law of large number applied to $(\xi_i)_{i\geq 1}$ ensures us that $P$-almost surely, $\lim_{N\to \infty} \frac{T_{N+1}}{T_{\sqrt{N}}}=\infty$. Therefore,  the probability that $\sqrt{N}\leq \tau_N$ converges to $1$ as $N\to \infty$. Since we are looking for a convergence in distribution, it is therefore sufficient to work under the event $\sqrt{N}\leq \tau_N$.
The law of large number also gives that, $P$-almost surely, $\lim_{N\to \infty} \frac{T_{N+1}}{N}=1$ which  implies that it suffices to consider 
the convergence in law of the random continuous processes $\hat f_N^j$ where 
\begin{align}\label{defhat}
\hat f_N^j(\s) &:=\sum_{i\in A_N^j} \frac{K_{i}(\s)}{T_i^\kappa}, \quad \text{for}\  j\in \{1,2,3\},\ \s\in [0,D]^d.
\end{align}}
\end{remark}

\begin{remark}\label{controlfy}
{\rm
Combining Lemma \ref{lem:hng} and the fact that $v_{\y}(\s)\leq dD/2$
for $\y,\s\in [0,D]^d$ we obtain that there 
exists a constant $C>0$ (depending on $\psi$ only) such that 
\begin{equation}\label{control diffpsi}
\Big | \psi\Big(\frac{v_{Y_i}(\s)}{Z_N^{(i)}}\Big)-  \psi\Big(\frac{v_{Y_i}(\x)}{Z_N^{(i)}}\Big)\Big | \leq C \big[Z_N^{(i)}\big]^{-n} |v_{Y_i}(\s)-v_{Y_i}(\x)|, \quad \s,\x\in [0,D]^d.
\end{equation}}

\end{remark}

By using  the inequality \ref{control diffpsi}, we can assert that  for every $i\leq \tau_N$, 
\begin{equation}\label{inegKi}
\valabs{K_{i}(\s)}\leq C |v_{Y_i}(\s)-v_{Y_i}(0)|
\end{equation}
which implies (since $v_{\y}(\s)\leq dD/2$ for $(\y,\s)\in [0,D]^d$) that there exists a $c>0$ such that 
$K_i(\s)\leq c$ for all $i\in \{1,\dots,\tau_N\}$ and $\s\in
[0,D]^d$. Remember that the random variable
$1/T_i^{\kappa}$ is in  $L^2$ 
if and only if  $i>n_1=\lfloor 2 \kappa \rfloor$. This guarantees us that, for $\s\in [0,D]^d$, the random variables $f_N^2(\s)$ and $f_N^3(\s)$
are in $L^2$. This is not the case for $f_N^1(\s)$.

 With the help of Slutsky's Lemma, Lemma \ref{resultpart1} is a straightforward  consequence of both convergences in 
Proposition \ref{convergence-en-loi-processus}  below.

%
%

%
%
%
%
%
  
\begin{proposition}\label{convergence-en-loi-processus}
For $d\in \{1,2\}$ and $\kappa>\undemi$,   
\begin{align}\label{convhatf}
 \hat f_N^1+\hat f_N^2&\Rightarrow_{N} \mu_d\\ 
\nonumber \hat f_N^3&\Rightarrow_N 0.
\end{align}
\end{proposition}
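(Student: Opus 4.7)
The plan is to obtain both convergences through the standard scheme: convergence of finite-dimensional marginals, then tightness via Kolmogorov's criterion (Proposition \ref{kolmog}), invoking Theorem \ref{convlawpro}. Since the limit in the second statement is deterministic and equal to $0$, Slutsky's lemma then lets us combine the two limits.

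For $\hat f_N^1 + \hat f_N^2 \Rightarrow \mu_d$, the heart of the argument is a Taylor expansion of $\psi$ at the origin. The assumptions $\psi \in \cH^n$, together with the Lipschitz property of $\psi^{(n-1)}$, give the bound
\begin{equation*}
\Bigl| z^n\bigl[\psi(u/z) - \psi(0)\bigr] - \tfrac{\psi^{(n)}(0)}{n!}\, u^n \Bigr| \leq C\, \frac{u^{n+1}}{z}
\end{equation*}
for $0 \leq u \leq z$, so that $K_i(\s) = G_i(\s) + R_i(\s)$ with $|R_i(\s)| \leq C''/Z_N^{(i)}$ uniformly in $\s \in [0,D]^d$, provided $Z_N^{(i)} \geq dD/2$ (which is ensured by taking $K$ large and restricting to $i\leq \tau_N$). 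For the finite-dim convergence, I would fix $\s_1,\dots,\s_k$ and write
\begin{equation*}
\bigl(\hat f_N^1+\hat f_N^2\bigr)(\s_j) - \bigl(\gamma_1+\gamma_{2,\infty}\bigr)(\s_j)
= \sum_{i=1}^{\sqrt N} \frac{R_i(\s_j)}{T_i^\kappa}
- \sum_{i>\sqrt N} \frac{G_i(\s_j)}{T_i^\kappa}.
\end{equation*}
The second sum goes to $0$ in $L^2$ because $(G_i)$ are i.i.d.\ centered and bounded, independent of $(T_i)$, and $\sum_i E[1/T_i^{2\kappa}]<\infty$ by Lemma \ref{L2} (this is where $\kappa>\undemi$ is crucial). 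For the first sum, using $|R_i| \leq C''/Z_N^{(i)} = C''(T_i/T_{N+1})^{1/(\beta-1)}$ and the law of large numbers $T_i \sim i$, $T_{N+1}\sim N$, an elementary estimate bounds the $L^1$ norm by a quantity vanishing like a negative power of $N$. Proposition \ref{convgamma2N} then supplies the almost-sure and $L^2$ convergence of $\gamma_{2,\sqrt N}$ to $\gamma_{2,\infty}$, which together with the a.s.\ convergence of the finite sum $\hat f_N^1 \to \gamma_1$ (here $Z_N^{(i)}\to\infty$ a.s.\ for $i\leq n_1$) completes the finite-dim convergence. For tightness I would apply Proposition \ref{kolmog} exactly as announced by Lemma \ref{lem:tkol1}: bound
\begin{equation*}
E\Bigl[\bigl|(\hat f_N^1+\hat f_N^2)(\x) - (\hat f_N^1+\hat f_N^2)(\s)\bigr|^p\Bigr] \leq C |\x-\s|_1^{d+\gamma},
\end{equation*}
with $p=2,\gamma=1$ when $d=1$ and $p=4,\gamma=2$ when $d=2$, via Remark \ref{controlfy} and the key moment bound \eqref{L2forz} (which needs $\kappa<\undemi$ in one version or $Z_1^{\alpha-n-d}\in L^4$ in the other; however, since we have split off the heavy tail using $Z_i\leq KD$ only matters for $f_N^-$, and the $K_i$'s here are uniformly bounded so the Kolmogorov bound reduces to controlling the moments of $\sum 1/T_i^\kappa$ weighted by a Lipschitz estimate on the centered process).

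For $\hat f_N^3 \Rightarrow 0$, the key observation is that conditionally on $\mathcal F_T:=\sigma(T_i:i\in\N)$ the random variables $\bigl(K_i(\s)\bigr)_i$ are independent (they depend on $\mathcal F_T$ only through $Z_N^{(i)}$, and separately on the i.i.d.\ $Y_i$'s) and centered (by Lemma \ref{lem:loivys}, $v_{Y_i}(\s)$ and $v_{Y_i}(0)$ share the same law when $Y_i$ is uniform). Since $|K_i(\s)|\leq c$ uniformly by \eqref{inegKi}, this yields
\begin{equation*}
E\bigl[\hat f_N^3(\s)^2\bigr]
= \sum_{i} E\!\left[\mathbf{1}_{i\in A_N^3}\, \frac{K_i(\s)^2}{T_i^{2\kappa}}\right]
\leq c^2 \sum_{i>\sqrt N} E\!\left[\frac{1}{T_i^{2\kappa}}\right] \xrightarrow[N\to\infty]{} 0
\end{equation*}
because $\kappa>\undemi$. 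This gives $L^2$ (hence in probability) pointwise convergence to $0$, and thus convergence of all finite-dim marginals to $0$. Tightness of $(\hat f_N^3)$ is obtained by the same Kolmogorov estimate as above (the argument is in fact simpler because conditional independence removes the cross-terms in $E[|\hat f_N^3(\x)-\hat f_N^3(\s)|^p]$), giving a bound of the same form $C|\x-\s|_1^{d+\gamma}$ that vanishes in $N$.

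The main obstacle is the middle regime $\hat f_N^2$: finitely many terms are controlled by a.s.\ arguments, and far-tail terms by the summability $\sum E[1/T_i^{2\kappa}]<\infty$, but the intermediate indices $i\leq \sqrt N$ require the quantitative Taylor remainder $|R_i|\lesssim 1/Z_N^{(i)}$ combined with a precise comparison between $Z_N^{(i)} \sim (N/i)^{1/(\beta-1)}$ coming from the representation \eqref{statordrepalt} and $T_i^\kappa$. Once this bookkeeping is done, assembling the two convergences through Slutsky gives the proposition.
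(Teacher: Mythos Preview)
Your overall architecture matches the paper's: split into $\hat f_N^1,\hat f_N^2,\hat f_N^3$, get finite-dimensional convergence plus Kolmogorov tightness, and conclude via Theorem~\ref{convlawpro}. The treatment of $\hat f_N^3$ is essentially identical to the paper's Lemma~\ref{lem:conv23}. There are, however, two genuine gaps in the treatment of $\hat f_N^1+\hat f_N^2$.

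\textbf{The Taylor remainder bound is not available.} You claim
\[
\Bigl| z^{n}\bigl[\psi(u/z)-\psi(0)\bigr]-\tfrac{\psi^{(n)}(0)}{n!}\,u^{n}\Bigr|\le C\,\frac{u^{n+1}}{z},
\]
i.e.\ $|\epsilon(x)|\le C'x$. The hypothesis $\psi\in\cH^n$ only gives that $\psi^{(n-1)}$ is Lipschitz and that $\psi^{(n)}(0)$ exists; it does \emph{not} force a rate on $\epsilon$. For instance, with $n=1$ one may take $\psi$ near $0$ of the form $1-x-x/|\log x|$: this is Lipschitz, differentiable at $0$ with $\psi'(0)=-1$, yet $\epsilon(x)=1/|\log x|$ is not $O(x)$. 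Consequently the bound $|R_i(\s)|\le C''/Z_N^{(i)}$ and the ensuing $L^1$ estimate on $\sum_{i\le\sqrt N}R_i/T_i^{\kappa}$ are unjustified. The paper avoids this by using only $\epsilon(u)\to 0$: one writes (see \eqref{norml2kg})
\[
\|\hat f_N^{2}(\s)-\gamma_{2,\sqrt N}(\s)\|_2^2
\le C\sum_{i>n_1} E\Bigl[T_i^{-2\kappa}\max_{u\le dD/2}\bigl|\epsilon(u/Z_N^{(\sqrt N)})\bigr|^2\Bigr],
\]
exploits conditional independence of the $K_i-G_i$ given $\cH_N$ to kill cross-terms, and concludes by dominated convergence since $Z_N^{(\sqrt N)}\to\infty$ a.s.\ and $\sum_{i>n_1}E[T_i^{-2\kappa}]<\infty$. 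Note that an $L^1$ bound would require $\sum E[T_i^{-\kappa}]<\infty$, i.e.\ $\kappa>1$, so even with a correct qualitative remainder your $L^1$ route does not close for $\kappa\in(\tfrac12,1]$.

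\textbf{Kolmogorov cannot be applied to $\hat f_N^1+\hat f_N^2$ jointly.} Your tightness plan bounds $E\bigl[|(\hat f_N^1+\hat f_N^2)(\x)-(\hat f_N^1+\hat f_N^2)(\s)|^p\bigr]$, but for $i\le n_1=\lfloor 4\kappa\rfloor$ one has $T_i^{-\kappa}\notin L^4$ (and possibly not even $L^1$), so the moment on the left is infinite. The paper therefore never applies Lemma~\ref{lem:tkol1} to $\hat f_N^1$: instead it observes that $\hat f_N^1\to\gamma_1$ \emph{almost surely in $\|\cdot\|_\infty$} (a finite sum of terms, each converging uniformly because $Z_N^{(i)}\to\infty$ and \eqref{eq:hng1}), which already yields tightness of $(\hat f_N^1)_N$. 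Kolmogorov with $p=4$ is then used only for $j\in\{2,3\}$, where all indices satisfy $i>n_1$ and the fourth moments are finite. You do mention the a.s.\ convergence of $\hat f_N^1$, so the fix is simply to use it for tightness as well and restrict the Kolmogorov argument to $\hat f_N^2$.
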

We will first prove Proposition \ref{convergence-en-loi-processus} subject to Lemmas \ref{lem:tkol1} and \ref{lem:conv23} below. Subsequently, we will prove 
those two Lemma in Sections \ref{prooflem1} and \ref{prooflem2} respectively.
To be more specific, with lemma \ref{lem:tkol1}, we apply Kolomogorov criterion (recall Proposition \ref{kolmog}) to check that $(\hat f_N^2)_{N\geq 1}$ and $(\hat f_N^3)_{N\geq 1}$ are tight sequences of random functions.  With Lemma \ref{lem:conv23}, in turn, we prove some convergence in $L^2$ that we will use afterwards to make sure that $(\hat f_N^2)_{N\geq 1}$ and $(\hat f_N^3)_{N\geq 1}$ also converge in finite dimensional distributions.
 \begin{lemma}\label{lem:tkol1}
\item For $d\in \{1,2\}$ and $\kappa>1/2$ there exists a $c>0$ such that for $j\in \{2,3\}$ and for $(\x,\s)\in [0,D]^d$
\begin{equation}\label{tensionkol1}
\underset{N\geq 1}{\sup}  \ \E\Big[\Big| \hat f_N^j(\x)-\hat f_N^j(\s)\Big |^4\Big]\leq c\, |x-s|_1^{4}.
\end{equation}

\end{lemma}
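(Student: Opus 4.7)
The plan is to invoke a standard fourth-moment estimate, conditioning on the $\sigma$-algebra $\mathcal F_T := \sigma(T_1, T_2, \ldots)$ in order to unentangle the dependence between the summands. Writing
\[
  \hat f_N^j(\x) - \hat f_N^j(\s) = \sum_{i \in A_N^j} \Delta_i, \qquad \Delta_i := \frac{K_i(\x) - K_i(\s)}{T_i^\kappa},
\]
I would first observe that, given $\mathcal F_T$, each $Z_N^{(i)} = (T_{N+1}/T_i)^{1/(\beta-1)}$ is deterministic and so $\Delta_i$ is a function of $Y_i$ alone. Since $(Y_i)_{i\geq 1}$ is i.i.d.\ and independent of the $T_j$'s, the variables $(\Delta_i)_{i \in A_N^j}$ are conditionally independent. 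Moreover they are conditionally centered: Lemma~\ref{lem:loivys} ensures that $v_{Y_i}(\x)$ and $v_{Y_i}(\s)$ share the same distribution, which forces $\E[K_i(\x) - K_i(\s) \mid \mathcal F_T] = 0$.

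With the independence and centering at hand, I would apply the classical identity
\[
\E\Big[\Big(\sum_{i} \Delta_i\Big)^4 \,\Big|\, \mathcal F_T\Big] = \sum_i \E[\Delta_i^4 \mid \mathcal F_T] + 3\sum_{i\neq k} \E[\Delta_i^2 \mid \mathcal F_T]\, \E[\Delta_k^2 \mid \mathcal F_T].
\]
The deterministic Lipschitz estimate \eqref{control diffpsi} combined with $|v_{Y_i}(\x) - v_{Y_i}(\s)| \leq |\x - \s|_1$ delivers the pointwise bound $|K_i(\x) - K_i(\s)| \leq C|\x-\s|_1$, so that $\E[\Delta_i^{2k} \mid \mathcal F_T] \leq C^{2k}|\x-\s|_1^{2k}/T_i^{2k\kappa}$ for $k = 1, 2$. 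Taking expectation then yields
\[
  \E\Big[\big(\hat f_N^j(\x) - \hat f_N^j(\s)\big)^4\Big] \leq C' |\x-\s|_1^{4} \bigg(\sum_{i \in A_N^j} \E[T_i^{-4\kappa}] + \Big(\sum_{i \in A_N^j} \E[T_i^{-2\kappa}]\Big)^{2}\bigg).
\]

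Finally, I would close the argument using Lemma~\ref{L2} and the fact that $n_1 = \lfloor 4\kappa \rfloor$ with $\kappa > 1/2$: for $i > n_1$ both $\E[T_i^{-4\kappa}]$ and $\E[T_i^{-2\kappa}]$ are finite, and since $T_i/i \to 1$ almost surely, they decay like $i^{-4\kappa}$ and $i^{-2\kappa}$ respectively. The conditions $4\kappa > 2$ and $2\kappa > 1$ then guarantee summability, hence an upper bound independent of $N$. I expect the only real subtlety to be the case $j = 3$, where the index set $\{\sqrt N + 1, \ldots, \tau_N\}$ has a \emph{random} upper endpoint; but $\tau_N$ is $\mathcal F_T$-measurable, so the conditional-independence structure is untouched, and one can simply bound the partial sum by the full tail $\sum_{i > n_1}$, which yields the $N$-uniform constant $c$ claimed in~\eqref{tensionkol1}.
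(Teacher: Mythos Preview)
Your approach is essentially the paper's: condition on the $\sigma$-algebra generated by the $T_i$'s, use conditional independence and centering of the increments $K_i(\x)-K_i(\s)$ (via Lemma~\ref{lem:loivys}), expand the fourth moment, and close with the pointwise Lipschitz bound~\eqref{control diffpsi}. Your handling of the random endpoint $\tau_N$ is also correct and matches the paper's implicit treatment.

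There is, however, one genuine slip in the step ``Taking expectation then yields''. After bounding the conditional cross terms by $C^4|\x-\s|_1^4\, T_i^{-2\kappa}T_k^{-2\kappa}$, you write the unconditional bound with $\bigl(\sum_{i}\E[T_i^{-2\kappa}]\bigr)^2$. This would require $\E[T_i^{-2\kappa}T_k^{-2\kappa}]\le \E[T_i^{-2\kappa}]\,\E[T_k^{-2\kappa}]$, but the inequality goes the other way: for $i<k$ one has $T_k=T_i+S$ with $S$ independent of $T_i$, so $T_i^{-2\kappa}$ and $(T_i+S)^{-2\kappa}$ are both decreasing in $T_i$ and hence \emph{positively} correlated. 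The paper fixes this with Cauchy--Schwarz,
\[
\E\bigl[T_i^{-2\kappa}T_k^{-2\kappa}\bigr]\le \E\bigl[T_i^{-4\kappa}\bigr]^{1/2}\,\E\bigl[T_k^{-4\kappa}\bigr]^{1/2},
\]
leading to the bound $\bigl(\sum_{i>n_1}\E[T_i^{-4\kappa}]^{1/2}\bigr)^2$. By~\eqref{equivgamma} this behaves like $\sum_i i^{-2\kappa}$, which converges since $2\kappa>1$; your conclusion is therefore unaffected, but the displayed inequality needs this correction.
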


\begin{lemma} \label{lem:conv23}
For $d\in \{1,2\}$ and for $s\in [0,D]^d$
\begin{equation}\label{conv2prem}
\lim_{N\to \infty}\hat f_N^2(\s)\underset{L^2}{=}\gamma_{2,\infty}(\s),
\end{equation}
\item for all  $\s\in [0,D]^d$
\begin{equation}\label{conv3prem}
\lim_{N\to \infty} \hat f_N^3(\s)\underset{L^2}{=}0.
\end{equation}
\end{lemma}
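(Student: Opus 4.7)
The plan is to reduce both statements to diagonal second-moment estimates obtained by conditioning on $\mathcal G := \sigma((T_i)_{i\geq 1})$. Since $Z_N^{(i)} = (T_{N+1}/T_i)^{1/(\beta-1)}$ is $\mathcal G$-measurable while the $Y_i$'s remain i.i.d.\ uniform on $[0,D]^d$ given $\mathcal G$, the family $\{K_i(\s)-G_i(\s)\}_i$ (respectively $\{K_i(\s)\}_i$) is conditionally independent; moreover each of its terms is centered conditionally on $\mathcal G$, because by Lemma~\ref{lem:loivys} the laws of $v_{Y_i}(\s)$ and $v_{Y_i}(0)$ coincide, whence $E[K_i(\s)\mid\mathcal G]=E[G_i(\s)\mid\mathcal G]=0$. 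All cross-terms in the $L^2$ expansion therefore vanish, producing the diagonal identities
\begin{equation*}
E\bigl[(\hat f_N^2(\s)-\gamma_{2,\sqrt N}(\s))^2\bigr]=\sum_{i=n_1+1}^{\sqrt N}E\Bigl[\frac{(K_i(\s)-G_i(\s))^2}{T_i^{2\kappa}}\Bigr],\qquad E\bigl[\hat f_N^3(\s)^2\bigr]=E\Bigl[\sum_{i=\sqrt N+1}^{\tau_N}\frac{K_i(\s)^2}{T_i^{2\kappa}}\Bigr].
\end{equation*}

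For \eqref{conv2prem}, I combine the triangle inequality with Proposition~\ref{convgamma2N}, which gives $\gamma_{2,\sqrt N}(\s)\to\gamma_{2,\infty}(\s)$ in $L^2$; it then suffices to show that the first diagonal sum above vanishes. I split it at a cutoff $M$. For each fixed $i$ with $n_1<i\leq M$, the almost-sure divergence $Z_N^{(i)}\to\infty$ combined with the Taylor expansion $\psi(u)=1+\tfrac{\psi^{(n)}(0)}{n!}u^n+o(u^n)$ at the origin (which follows from $\psi\in\cH^n$) yields $K_i(\s)\to G_i(\s)$ almost surely; together with the uniform bound $|K_i(\s)|,|G_i(\s)|\leq C$ inherited from \eqref{control diffpsi} and the integrability of $T_i^{-2\kappa}$ for $i>n_1$ (Lemma~\ref{L2}), dominated convergence kills each of these finitely many terms. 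For $M<i\leq \sqrt N$, the crude bound $|K_i(\s)-G_i(\s)|\leq 2C$ controls the tail by $4C^2\sum_{i>M}E[T_i^{-2\kappa}]$, which vanishes as $M\to\infty$ because $\kappa>1/2$ ensures $\sum_i E[T_i^{-2\kappa}]<\infty$. Sending $N\to\infty$ first and then $M\to\infty$ concludes.

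For \eqref{conv3prem}, the same uniform bound $|K_i(\s)|\leq C$ together with the deterministic inequality $\tau_N\leq N$ directly gives $E[\hat f_N^3(\s)^2]\leq C^2\sum_{i=\sqrt N+1}^N E[T_i^{-2\kappa}]$, which is the tail of the same convergent series and hence vanishes. The main technical input in both parts is the Taylor expansion of $\psi$ at the origin supplied by the $\cH^n$-regularity: it is precisely what cancels the prefactor $[Z_N^{(i)}]^n$ hidden in $K_i$, making the pointwise limit equal to $G_i(\s)$ rather than to some larger rescaled quantity. Everything else is a standard interplay between conditional orthogonality and the summability $\sum_i E[T_i^{-2\kappa}]<\infty$, which is exactly where the assumption $\kappa>1/2$ enters.
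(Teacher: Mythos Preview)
Your proof is correct and follows essentially the same strategy as the paper: reduce to diagonal second moments via conditional orthogonality (the paper conditions on $\cH_N=\sigma(T_1,\dots,T_{N+1})$, you on $\sigma((T_i)_{i\ge 1})$, which is equivalent here), invoke the Taylor expansion of $\psi$ from Lemma~\ref{lem:hng} to identify $K_i-G_i$ with the $\epsilon$-remainder, and finish using the summability $\sum_{i>n_1}E[T_i^{-2\kappa}]<\infty$ guaranteed by $\kappa>1/2$. The treatment of \eqref{conv3prem} is identical to the paper's.

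The one genuine, if minor, difference is in \eqref{conv2prem}: the paper exploits the monotonicity $Z_N^{(i)}\ge Z_N^{(\sqrt N)}$ for $i\le \sqrt N$ to bound all remainder terms \emph{uniformly} by $\max_{u\in[0,dD/2]}|\epsilon(u/Z_N^{(\sqrt N)})|^2$, which tends to $0$ a.s.\ since $Z_N^{(\sqrt N)}\to\infty$; a single application of dominated convergence then handles the whole sum at once. You instead use a two-scale cutoff argument (fixed $i\le M$ by pointwise convergence, $i>M$ by the convergent tail), which is slightly more elementary and avoids needing the monotonicity of the order statistics. Both routes are clean; the paper's is a bit shorter, yours perhaps more robust if one ever lost the ordering structure.
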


\begin{proof}[Proof of proposition \ref{convergence-en-loi-processus} subject to Lemmas \ref{lem:tkol1} and \ref{lem:conv23}]

\noindent To begin with, we recall \eqref{defgamma12}. Then, we apply \eqref{eq:hng1} in \eqref{defKi} and we obtain  
that  the sequence of random functions $(\hat f_N^1)_{N\geq 1}$
converges $P$- almost surely towards $\gamma_1$ for the $||\cdot||_\infty$ norm. As a consequence $(\hat f_N^1)_{N\geq 1}$
is tight.

We apply Proposition \ref{kolmog}, that is we use Lemma \ref{lem:tkol1} for $j=3$ and \eqref{conv3prem} at $\s=0$ to conclude that $(f_N^3)_{N\geq 1}$ is a tight sequence of random continuous processes. To complete the proof of 
$f_N^3\Rightarrow_N 0$, 
it remains to show that, for every $\s\in [0,D]^d$ (recall Remark \ref{remconvprobponct}) the random sequence  $(f_N^3( \s))_{N\in \N}$ converges in probability towards $0$, but this is a consequence of \eqref{conv3prem}.

With the help of (\ref{tensionkol1}) for $j=2$ and with \eqref{conv2prem} at $\s=0$, we apply Proposition \ref{kolmog} again and we conclude that  $(\hat f_N^2)_{N\geq 1}$ is a tight sequence of random continuous processes.  At the beginning of the proof, we have noticed that 
$(\hat f_N^1)_{N\geq 1}$ is also tight. Thus $(f_N^1+f_N^2)_{N\geq 1}$ is also a tight sequence of continuous process. This can be seen for instance as a consequence of the fact that the continuity modulus of the sum of two functions is bounded above by the sum of the continuity modulus of those functions.   Thus the proof of $\hat f_N^1+\hat f_N^2\Rightarrow_N \mu_d$ will be complete once we prove the convergence in finite dimensional distributions of $f_N^1+f_N^2$ towards $\gamma_1+\gamma_{2,\infty}$.
To that aim, by using Remark \ref{remconvprobponct}, we pick  $ \s\in [0,D]^d$ and  we must show that 
\begin{equation}
\lim_{N\to \infty}\ f_{N}^1(\s)+f_N^2(\s)\underset{\text{Prob}}{=} 
\gamma_1(\s)+\gamma_{2,\infty}(\s).
\end{equation}
The fact that $f_N^1$ converges $P$-almost surely towards $\gamma_1$ for the $||\cdot||_\infty$ norm implies that $\hat f_N^1(\s)$ converges almost surely towards $\gamma_1(\s)$.
Moreover, the fact that $f_N^2(\s)$ converges in probability towards  $\gamma_{2,\infty}(\s)$
is a consequence of \eqref{conv2prem} and this completes the proof. 

%
%

\end{proof}

%
\subsubsection{Proof of Lemma \ref{lem:tkol1}}\label{prooflem1}

Pick $j\in \{2,3\}$ and let us prove \eqref{tensionkol1}. 
We set $K_i(\s,\x)=K_i(\x)-K_i(\s)$ and we denote by 
$\cH_N:=\sigma(T_1,\dots,T_{N+1})$ the $\sigma$-algebra generated by the  random variables $T_1,\dots,T_{N+1}$.  
We condition on  $\cH_N$ and we obtain
\begin{align}\label{cauchy}
 E\Big[\big|\hat f_N^j(\x)-\hat f_N^j(\s)\big|^4\Big] &=
E\bigg[ E\bigg[\bigg(\sum_{i\in A_N^j} \frac{K_i(\s,\x)}{T_i^\kappa} \bigg)^4\Big | \cH_N\bigg]\bigg].
\end{align}
Recall that $n_1=\lfloor 4\kappa\rfloor$ such that $T_i^{-\kappa}\in L^4$ for every $i>n_1$. Conditionally on $\cH_N$, the random variables $(K_i(\s,\x))_{i\in \N}$ are independent and centered because the random variables $(Y_i)_{i \in \N}$ are i.i.d. 
and independent of $\cH_N$. Thus,
after expanding the product in the r.h.s. in \eqref{cauchy}, we
only get the following terms
\begin{align}\label{koldim4}
 \nonumber \esp{ \etp{\sum_{i\in A_N^j} \frac{K_i(\s,\x)}{T_i^\kappa} }^4 \mid
    \Hrond_N} &= \sum_{i\in A_N^j} \esp{K_i(\s,\x)^4} T_i^{-4\kappa}
  \\\
  &+ \sum_{k\neq l \in A_N^j} \esp{K_k(\s,\x)^2 K_l(\s,\x)^2}
  T_k^{-2\kappa}\  T_l^{-2\kappa}.
\end{align}
By using the inequality \ref{control diffpsi}  combined with \eqref{defKi} and since  $v_{\y}(\x)\leq D$ for $(\y,\x)\in [0,D]^d$, we obtain that there exists a
$c_3>0$ such that
\begin{align}\label{boundkk}
\big | K_i(\x,\s)\big |\leq c_3 |v_{Y_i}(\x)-v_{Y_i}(\s)|\leq c_3\,  |\x-\s|_1
\end{align}
where we have also used that $|\text{dist}(\s,Y_i+D\Z^d)-\text{dist}(\x,Y_i+D\Z^d)|\leq |\s-\x|_1$.
\medskip

Combining \eqref{boundkk} with  \eqref{koldim4} we obtain

  \begin{equation}
   \sum_{i\in A_N^j} \esp{K_i(\s,\x)^4} T_i^{-4\kappa}   \le C' |\x-\s|_1^{4} \sum_{i\in A_N^j} T_i^{-4\kappa}.
  \end{equation}
 For the second term in \eqref{koldim4}, we apply Cauchy-Schwarz inequality to write
$$  \esp{T_k^{-2\kappa}\,  T_l^{-2\kappa} } \leq \esp{T_k^{-4 \kappa}}^{1/2}\, \esp{T_l^{-4\kappa}}^{1/2}  $$
so that after taking the expectation in both sides of  \eqref{koldim4} we can rewrite \eqref{cauchy} as
\begin{align}
 E\Big[\big|\hat f_N^j(\x)-\hat f_N^j(\s)\big|^4\Big] &    \le C' |\x-\s|_1^{4} \Bigg[\sum_{i\in A_N^j} \esp{T_i^{-4\kappa}}+  \Big(\sum_{i\in A_N^j} \esp{T_i^{-4\kappa}}^{1/2}\Big)^2\Bigg].
\end{align}

Since $2\kappa>1$ and since for $i \in A^j_N$ we have $i> n_1\ge 4\kappa$, we deduce from
\eqref{equivgamma} that there exists $c_1>0$ and $c_2>0$ such that 
\begin{align} 
 \sum_{i\in A_N^j}  \esp{T_i^{-4\kappa}}^{1/2} \le  \sum_{i\ge n_1}
  \esp{  T_i^{-4\kappa}}^{1/2} < c_1 \sum_{i=1}^\infty \frac{1}{i^{2\kappa}}<+\infty\\
   \sum_{i\in A_N^j}  \esp{T_i^{-4\kappa}} \le  \sum_{i\ge n_1}
  \esp{  T_i^{-4\kappa}} < c_2 \sum_{i=1}^\infty \frac{1}{i^{4\kappa}}<+\infty.
\end{align}


and therefore that for a constant $C''>0$,
\begin{equation}\label{rectov}
\underset{N\geq 1}{\sup}  \ \E\Big[\Big| \hat f_N^j(\x)-\hat f_N^j(\s)\Big |^4\Big]\leq C''\, |\x-\s|_1^{4}
\end{equation}

\subsubsection{Proof of Lemma \ref{lem:conv23}}\label{prooflem2}
Now, we keep going with \eqref{conv2prem}. We are going to prove that $\hat f_N^{2}(\s)-\gamma_{2,\sqrt{N}}(\s)$ converges to $0$ in $L^2$, which combined 
with Proposition \ref{convgamma2N} will be sufficient to complete the proof of \eqref{conv2prem}.
We recall \eqref{defKi} and we use  \eqref{eq:hng1} at $x=v_{Y_i}(\s)/Z_N^{(i)}$ and at $x=v_{Y_i}(0)/Z_N^{(i)}$ to obtain 
\begin{equation}\label{difkg}
K_i(\s)-G_i(\s)=\gep\bigg(\frac{v_{Y_i}(\s)}{Z_N^{(i)}}\bigg)\, v_{Y_i}(\s)^n-\gep\bigg(\frac{v_{Y_i}(0)}{Z_N^{(i)}}\bigg)\, v_{Y_i}(0)^n
\end{equation}
Using \eqref{difkg}, combined with the fact that, conditionally on $\cH_N$, the random variables $(K_i(\s)-G_i(\s))_{i\geq n_1+1}$ 
are independent and centered, we obtain that there exists a $C>0$ (depending on $D$ and $n$ only) such that 
\begin{align}\label{norml2kg}
\nonumber ||\hat f_N^{2}(\s)-\gamma_{2,\sqrt{N}}(\s)||_2^2&=\sum_{i=n_1+1}^{\sqrt{N}}  E\Bigg[\frac{1}{T_i^{2\kappa}} \Bigg(\gep\bigg(\frac{v_{Y_i}(\s)}{Z^{(i)}_N}\bigg)\ v_{Y_i}(\s)^n-\gep \bigg(\frac{v_{Y_i}(0)}{Z^{(i)}_N}\bigg) \ v_{Y_i}(0)^n\Bigg)^2 \Bigg]\\
&\leq C \sum_{i=n_1+1}^{\infty}  E\Bigg[\frac{1}{T_i^{2\kappa}}\max_{u\in [0,dD/2]}\bigg |\gep\bigg(\frac{u}{Z^{(\sqrt{N})}_N}\bigg)\bigg |^2\Bigg].
\end{align}
At this stage, we recall Remark \ref{llnea} which states that
$P$-almost surely 
$\lim_{N\to \infty} Z_N^{(\sqrt{N})}=\infty$. Since $\lim_{u\to 0} \epsilon(u) =0$ and
$\kappa > \undemi$, we can use dominated convergence to prove that
$\lim_{N\to \infty} ||\hat f_N^{2}(\s)-\gamma_{2,\sqrt{N}}(\s)||_2=0$.
Indeed, since we restricted ourselves to the set $\ens{\sqrt{N} \le
    \tau_N}$ on which $Z^{(\sqrt{N})}_N \ge K D$, we have
  \begin{equation}
    \sup_{N\in \N} \sup_{u\in[0, dD/2]} \valabs{\epsilon\etp{\frac{u}{Z_N^{(\sqrt{N})}}}}
    \le \sup_{v\in [0,d/2K]} \valabs{\epsilon(v)} < +\infty\,.
  \end{equation}
It remains to use Proposition \ref{convgamma2N} which guarantees us that $(\gamma_{2,\sqrt{N}})_{N\geq 1}$ converges towards 
$\gamma_{2,\infty}$ in $L^2$ to complete the proof of \eqref{conv2prem}.

Let us now prove \eqref{conv3prem}. It is enough to show that $\lim_{N\to \infty}  ||\hat f_N^{3}(\s)||_2=0$. To that aim, we write
\begin{align}\label{cauchy}
\nonumber E\Big[\hat f_N^{3}(\s)^2\Big] &=
E\bigg[ \ E\bigg[\bigg(\sum_{i=\sqrt{N}+1}^{\tau_N} \frac{K_i(\s)}{T_i^{\kappa}} \bigg)^2\Big | \cH_N\bigg]\bigg]\\
 &=
E\bigg[ \sum_{k=\sqrt{N}+1}^N \  \ind_{\{\tau_N=k\}}\  E\bigg[\bigg(\sum_{i=\sqrt{N}+1}^{k} \frac{K_i(\s)}{T_i^{\kappa}} \bigg)^2\Big | \cH_N\bigg]\bigg].
\end{align}
In order to mimic the computation \eqref{koldim4}, we observe that, conditionnally on $\cH_N$, the random variables  $(K_i(\s))_{i\in \N}$ are independent and centered since  $(Y_i)_{i\in\N}$ are independent of $\cH_N$. Thus, for $k\in \{\sqrt{N}+1,\dots,\tau_N\}$, we obtain 
\begin{align}\label{sommecondhn}
\nonumber E\bigg[\bigg(\sum_{i=\sqrt{N}+1}^{k} \frac{K_i(\s)}{T_i^{\kappa}} \bigg)^2\Big | \cH_N\bigg]
&=\sum_{i=\sqrt{N}+1}^{k} E\big[ K_i(\s)^2\big] \frac{1}{T_i^{2\kappa}}\\
&\leq c \sum_{i=\sqrt{N}+1}^{k}  \frac{1}{T_i^{2\kappa}}
\end{align}
where, to obtain the second line, we have used that there exists a $c>0$ such that  $|K_i(\s)|\leq c$ for $\s\in [0,D]^d$ and $i\leq \tau_N$ (this was proven below \eqref{inegKi}).
It remains to use \eqref{sommecondhn} in \eqref{cauchy} to obtain  
\begin{equation}\label{convalt}
\nonumber E\Big[\hat f_N^{3}(\s)^2\Big]  \leq c
\sum_{i=\sqrt{N}+1}^{\infty} E\Bigg[ \frac{1}{T_i^{2\kappa}}\Bigg]\,,
\end{equation}

which, combined with \eqref{equivgamma} and with the inequality
$2\kappa>1$ allows us to conclude that
\begin{equation}
  \lim_{N\to \infty} E\Big[\hat f_N^{3}(\s)^2\Big] =0\,.
\end{equation}

\smallskip

\subsubsection{Convergence of $N^{-\kappa}\, f_N^{-}$: proof of Lemma \ref{resultpart2}}
\label{prlem2}

We will prove that $(\frac{1}{\sqrt{N}} f_N^{-})_{N\in \N}$ is a tight sequence of random functions. Thus, with the help of 
Slutsky's lemma, it will be sufficient to prove Lemma \ref{resultpart2} because $\kappa>1/2$. In order to obtain this tightness in dimension $d=1$
and $d=2$ we will use Proposition \ref{kolmog}. 

 We recall \eqref{elemprotrans} and we write
\begin{align}
\unsur{\sqrt{N}} f_N^-(\x)=\unsur{\sqrt{N}} \sum_{i=1}^N \un{Z_i\leq KD}\   \Big[X_i(\s)-X_i(0)\Big].
\end{align}
We observe that $f_N^{-}(0)=0$ for every $N\in \N$ so that  $(\frac{1}{\sqrt{N}} f_N^{-}(0))_{N\in \N}$ is obviously tight.
It remains to consider for $d=1$ and $(s,x)\in [0,D]^2$,
\begin{align}\label{compsecmome}
\nonumber  \unsur{N} \esp{\valabs{f_N^-(x) -f_N^-(s)}^2} &\leq  \frac{1}{N}\  \esp{ \Big(\sum_{i=1}^N  \un{Z_i\leq KD}\   \Big[X_i(s)-X_i(x)\Big]\Big)^2}\\
  &=  \frac{1}{N}\ \sum_{i=1}^N \esp{   \un{Z_i\leq KD}\   \Big[X_i(s)-X_i(x)\Big]^2},
\end{align}
where the equality in \eqref{compsecmome} is true because, on the one hand,
the sequence of random variables $\big(\un{Z_i\leq KD}\   \big[X_i(s)-X_i(x)\big]\big)_{i\in \N}$ is independent and bounded, and on the other hand,
Lemma \ref{lem:loivys} guarantees us that the law of $v_{Y_i}(t)$ does not depend on $t\in [0,D]$. Therefore, the sequence of random variables is also identically distributed and centered.  From \eqref{compsecmome} combined with Lemma  \eqref{eq:hng2} in Lemma \ref{lem:hng} we deduce that there exists $C>0$ and $C'>0$ such that
\begin{align}\label{boundsecmomfnsmall}
\nonumber  \unsur{N} \esp{\valabs{f_N^-(x) -f_N^-(s)}^2}
&\le C\  \esp{\un{Z_1 \le
      KD} Z_1^{2(\alpha-n-d))}}
      |x-s|^{2}\\
      &= C'\  |x-s|^{2},
\end{align}
and note that we have used again that $|\text{dist}(s,Y_i+D\Z)-\text{dist}(x,Y_i+D\Z)|\leq |s-x|_1$. This 
completes the proof of the tightness of   $(\frac{1}{\sqrt{N}} f_N^{-})_{N\in \N}$ for $d=1$.

It remains to prove the counterpart of \eqref{boundsecmomfnsmall} when $d=2$. For the sake of conciseness we set for $i\in \N$
and $\s \in [0,D]^2$,
$$\widetilde X_i(\s)=\un{Z_i\leq KD} \ X_i(\s).$$
For $N\in \N$ and $(\s,\x)\in [0,D]^2$,
we write that there exists $C>0$ such that
 \begin{align}\label{proofdim2bound}
\nonumber  \frac{1}{N^2}\, E\Big[ &\big| f_N(\x)-f_N(\s)\big |^4\Big]=\frac{1}{N^2} E\Big[ \Big(\sum_{i=1}^N \big[\widetilde{X}_i(\x)-\widetilde{X}_i(\s)\big]\Big)^4\Big]\\
\nonumber  &=\frac{1}{N^2}\sum_{i=1}^N E(|\widetilde{X}_i(\x)-\widetilde{X}_i(\s)|^4)+\frac{2}{N^2}\, \sum_{1\leq i<j\leq N} E(|\widetilde{X}_i(\x)-\widetilde{X}_i(\s)|^2 |\widetilde{X}_j(\x)-\widetilde{X}_j(\s)|^2 )\\
\nonumber  &=\frac{1}{N}\, E(|\widetilde{X}_1(\x)-\widetilde{X}_1(\s)|^4)+\frac{3 N (N-1)}{N^2} E(|\widetilde{X}_1(\x)-\widetilde{X}_1(\s)|^2 |\widetilde{X}_2(\x)-\widetilde{X}_2(\s)|^2 )\\
 & \leq  C |\x-\s|_1^4
 \end{align}
where we have use that, for the same reasons as in dimension $1$, the random variables $(\widetilde X_i(\x)-\widetilde X_i(\s))_{i\in \N}$ are i.i.d., bounded and centered. This completes the proof in dimension $d=2$.

\subsection{ Proof of  Theorem \ref{fluctconver}, case
  $\kappa<1/2$.}\label{cas-L2}

The proof of \eqref{secconv} will pretty much look alike that of  Lemma \eqref{resultpart2} except that  we work here with the sequence of 
independent random variables $ \big[X_i(\s)-X_i(\x)\big]\big)_{i\in \N}$ instead of $\big(\un{Z_i\leq KD}\   \big[X_i(\s)-X_i(\x)\big]\big)_{i\in \N}$. However, the fact that $\kappa<1/2$ guarantees us that $X_1(\s)-X_1(\x)$ is in $L^2$.
\subsubsection{Finite dimensional convergence}
We recall Remark \ref{explibetac} and more specifically equations \eqref{boundzx} and \eqref{L2forz} which guarantees us that (since $\kappa<1/2$) the random variables 
$X_1(s)-X_1(0)$ are in $L^2$. As a consequence, we can claim  that for every  $\bar s\in ([0,D]^d)^k$ the sequence of random vectors $(X_i(\bar s)-X_i(0_k))_{i\geq 1}$ is i.i.d., centered and in $L^2$. 
Therefore, a straightforward application of the 
multi-dimensional central limit Theorem ensures us that any finite dimensional marginal of $N^{-1/2}\, f_N$ converges to $Y$ (recall
the definition of $Y$ below \eqref{secconv})
  as $N\to \infty$.
\subsubsection{Convergence of processes, case $d=1$ }
 Finite dimensional convergence established, only the tightness of the
 sequence of processes need to be proved, so  
\eqref{secconv} will be proven once we show that there exist a $C>0$ such that for every $(x,s)\in [0,D]^2$, 
\begin{equation}\label{tensionkolcastcl}
\underset{N\geq 1}{\sup} \   \frac{1}{N}\,  \E\Big[\Big| f_N(x)- f_N(s)\Big |^2\Big]\leq C\, |x-s|^{2}.
\end{equation}


We repeat the proof of \eqref{boundsecmomfnsmall} with the slight difference that we must remove the terms $\un{Z_1 \leq KD}$ but this does not arm the proof since (as explained above) $\kappa<1/2$  yields  that $X_1(s)-X_1(0)$ are in $L^2$. 
As a consequence, for $(x,s)\in [0,D]^2$,
 \begin{align}
\nonumber  \frac{1}{N}\, E\Big[ \big| f_N(x)-f_N(s)\big |^2\Big]&=\frac1N  \sum_{i=1}^N E(|X_i(x)-X_i(s)|^2)\\
\nonumber  &=E(|X_1(x)-X_1(s)|^2)
 \end{align}
 With the help of Remark \ref{controlfy}, we write
 \begin{align}\label{fluctuations}
  \nonumber  \esp{(X_1(x)-X_1(s))^2}&=\esp{Z_1^{2(\alpha-d)}
      \etp{\psi\Big(\frac{v_{Y_1}(s)}{Z_1}\Big) -
        \psi\Big(\frac{v_{Y_1}(x)}{Z_1}\Big)}^2}\\
\nonumber     &\le C \esp{Z_1^{2(\alpha-n-d)} |v_{Y_1}(s)
        -v_{Y_1}(x)|}^{2} \\
\nonumber     &\le C \esp{Z_1^{2(\alpha-n-d)}}
      |x-s|^{2}\\
      &= C' |x-s|^{2}\,.
 \end{align}
 This completes the proof in the case $d=1$.

\subsection{Convergence of the processes, case $d=2$, $\kappa < \frac14$}\label{proofcasedis2}
We shall establish tightness of the sequence of processes. 
 We show that there exists a $C>0$ such that for every $(\x,\s)\in ([0,D]^2)^2$
\begin{equation}\label{tensionkolcastcl}
\underset{N\geq 1}{\sup} \   \frac{1}{N^2}\,  \E\Big[\Big| f_N(\x)- f_N(\s)\Big |^4\Big]\leq C\, |\x-\s|_1^{4}.
\end{equation}


Let us prove \eqref{tensionkolcastcl} by mimicking \eqref{proofdim2bound}, i.e.,
 \begin{align}\label{fourthmoment}
\nonumber  \frac{1}{N^2}\, E\Big[ \big| f_N(\x)&-f_N(\s)\big |^4\Big]=\frac{1}{N^2} E\Big[ \Big(\sum_{i=1}^N X_i(\x)-X_i(\s)\Big)^4\Big]\\
 &=\frac{1}{N^2}\sum_{i=1}^N E(|X_i(\x)-X_i(\s)|^4)+\frac{2}{N^2}\, \sum_{1\leq i<j\leq N} E(|X_i(\x)-X_i(\s)|^2 |X_j(\x)-X_j(\s)|^2 )\\
\nonumber  &=\frac{1}{N}\, E(|X_1(\x)-X_1(\s)|^4)+\frac{3 N (N-1)}{N^2} E(|X_1(\x)-X_1(\s)|^2 )^2.
 \end{align}
It remains to apply Remark \ref{controlfy} to obtain  that there exists $C>0$ such that
\begin{align}\label{upperboundmom}
E(|X_1(\x)-X_1(\s)|^4)&\leq C\  E\big[Z_1^{4(\alpha-d-n)}\big] |\x-\s|_1^4, \\
E(|X_1(\x)-X_1(\s)|^2)&\leq C E\big[Z_1^{2(\alpha-d-n)}\big] |\x-\s|_1^2.
\end{align}
The fact that $\kappa<1/4$ guarantees us that $E\big[Z_1^{2(\alpha-d-n)}\big]$ and $E\big[Z_1^{4(\alpha-d-n)}\big]$ are finite.
Then, it suffices to combine \eqref{fourthmoment} with \eqref{upperboundmom}
to achieve the proof of \eqref{tensionkolcastcl}.

\subsection{Critical case $d\in\ens{1,2},\kappa=\undemi$}\label{subsec:criticalrand}
We will first prove the tension of  \( L_N = (N \log N)^{-1/2} f_N \), which we split into $L_N:=L_N^1+L_N^2$ associated with the decomposition 
\[
\begin{aligned}
& f_N^1(\x) = \sum_{k=1}^N \left( X_k(\x) - X_k(0) \right) \un{ Z_k^{\alpha-d-n} \leq \sqrt{k} }.   \\
& f_N^2(\x) = \sum_{k=1}^N \left( X_k(\x) - X_k(0) \right) \un{ Z_k^{\alpha-d-n} > \sqrt{k} }.
\end{aligned}
\]
We will use Proposition \ref{tightness1}, which involves the modulus of continuity. We start with $(L_N^2)_{N\in \N}$
and we observe that $Z_1^{\alpha-d-n}$ is Pareto distributed with parameter $2$. Thus,
\[
\mathbb{E}\left[ Z^{\alpha-d-n} \un{ Z^{\alpha-d-n} > \sqrt{k} } \right]=  \frac{2}{\sqrt{k}}.
\]
and an upper bound obtained with  remark \ref{controlfy}  gives us that there exists a $C_1>0$ such that 
\[
\left| f_N^2(x) - f_N^2(y) \right| \leq C_1 \sum_{k=1}^N Z_k^{\alpha-d-n} \un{ Z_k^{\alpha-d-n} > \sqrt{k} }, \quad (x,y)\in [0,D]^d\times [0,D]^d.
\]
Therefore, there exists a $C_2>0$ such that
\[
\begin{aligned}
\mathbb{P}\left( w_{L_N^2}(\delta) \geq \epsilon \right) & \leq \mathbb{P}\left( C_1 \sum_{k=1}^N Z_k^{\alpha-d-n} \un{ Z_k^{\alpha-d-n} > \sqrt{k} } \geq  \gep (N \log N)^{\frac12} \right) \\
& \leq \frac{C_1}{\gep} (N \log N)^{-\frac12}  \mathbb{E}\left[ \sum_{k=1}^N Z_k^{\alpha-d-n} \mathbf{1}_{\left\{ Z_k^{\alpha-d-n} > k \right\}} \right] \\
& \leq \frac{C_1}{\epsilon} (N \log N)^{-\frac12} \sum_{k=1}^N \frac{2}{\sqrt{k} } \\
& \leq \frac{C_2}{\gep} \sqrt{\frac{1}{\log N}}.
\end{aligned}
\]

Consequently,
\begin{equation}\label{upperboundmodcont}
\lim_{\delta \to 0} \limsup_{N \to +\infty} \mathbb{P}\left( w_{L_N^2}(\delta) \geq \epsilon \right) = 0.
\end{equation}
For the tightness of \( (L_N^1)_{N\in \N} \), we will use the Kolmogorov criterion exposed in Proposition \ref{kolmog}. More precisely, 
we will show that there exists a \( C>0 \) such that
\begin{equation}\label{contrsup}
\sup_{N \geq 1} \mathbb{E}\left[ \left( L_N^1(\x) - L_N^1(\y) \right)^4 \right] \leq C |\x - \y|_1^4 \quad (\x, \y \in [0, D]^d),
\end{equation}
and this will give the desired tightness for $d\in \{1,2,3\}$. For larger dimensions, one has to compute higher moments in \eqref{contrsup} but we will not 
display such computations below.

%
%

Notice that
\[
f_N^1(x) - f_N^1(y) = \sum_{k=1}^N R_k:= \sum_{k=1}^N \left( X_k(x) - X_k(y) \right) \un{ Z_k^{\alpha-d-n} \leq \sqrt{k} }
\]
is a sum of \( N \) independent, centered random variables (the variables $R_k$ are centered because \( X_k(x) \) and \( X_k(y) \) have the same distribution). 
Consequently,
\begin{align}\label{fnlesvar}
\mathbb{E}\left[ \left( f_N^1(x) - f_N^1(y) \right)^4 \right] & = \sum_{k=1}^N \mathbb{E}\left[ R_k^4\right] +2\sum_{1\leq i<j\leq N} E\left[R_i^2\right]\, E\left[R_j^2\right].
\end{align}
With the help of \eqref{control diffpsi} we can assert that there exists a $C>0$ such that 
$$|R_k|\leq  C |x-y|  Z_k^{\alpha-d-n}  \un{ Z_k^{\alpha-d-n} \leq \sqrt{k} },$$
and therefore, after recalling that $Z_k^{\alpha-d-n}$ follows a Pareto distribution of parameter $2$ we write that there exists a $C>0$ such that 
for $k\in \N$,
\begin{align}\label{momtwoandfour}
E\left[R_k^4\right]\leq C\, |x-y|^4\, k \quad \text{and} \quad  E\left[R_k^2\right]\leq C\, |x-y|^2\, \log k .
\end{align}
Putting together \eqref{fnlesvar} and \eqref{momtwoandfour} we obtain that there exists  $C_1>0$ and $C_2>0$  such that 
\begin{align}\label{finuppbound}
\nonumber \mathbb{E}\left[ \left( L_N^1(x) - L_N^1(y) \right)^4 \right] &\leq \frac{C_1}{N^2\, (\log N)^2}\,  |x-y|^4 \,  \Big( \sum_{k=1}^N k + \sum_{1\leq i<j\leq N} \log i \,   \log j \Big),\\
&\leq C_2\,  |x-y|^4.
\end{align}
This, combined with \eqref{upperboundmodcont}  completes the proof of the tightness of $(L_N)_{N\in \N}$.

\medskip

It remains to prove its convergence in finite dimensional distribution.
To that aim, we first apply Lemma \ref{lem:att_par_normal} to the random
variable
\begin{equation}
  \xi_s = X_1(s) -X_1(0) = Z_1^{\alpha -d} \Big[ \psi\Big(\frac{v_s(U_1)}{Z_1}\Big) -\psi\Big(\frac{v_0(U_1)}{Z_1}\Big) \Big]
\end{equation}
with $U_1$ a uniform on $[0,D]^d$ independent of $Z_1$ a Pareto($\beta -1$)
random variable. Since $\kappa=\frac{\alpha -n -1}{\beta -1} =
\frac12$, we have with $A_s = \frac{\psi^{(n)}(0)}{n!}(v_s(U_1)^n -
v_0(U_1)^n)$, according to Lemma \ref{lem:hng},
\begin{align*}
  \prob{\xi_s >x} &= \prob{Z_1^{\alpha -n -1}>
    \frac{x}{A_s}, A_s >0}\, (1+o_{x\to \infty}(1))\\
  &= x^{-2}\,  \esp{A_s^2 \un{A_S >0}}\, (1+o_{x\to \infty}(1)).
\end{align*}
Similarly
\begin{equation}
\prob{-\xi > x}= x^{-2}\, \esp{A_s^2 \un{A_S <0}}\,(1+o_{x\to \infty}(1)).
\end{equation}
Therefore, we obtain that
\begin{equation}
\lim_{N\to \infty}  (N \log N)^{-\undemi} f_N(s)\underset{\text{law}}{=} \Nrond(0,\esp{A_s^2}).
\end{equation}

Given $s_1, \ldots,s_n \in [0,D]$ and real numbers
$\alpha_1,\ldots,\alpha_n$, we can apply the same technique to the
random variable
\begin{equation}
  \xi = \sum_{i=1}^n \alpha_i \xi_{s_i}
\end{equation}
and obtain the convergence, with $A= \sum_{i=1}^n A_{s_i}$,
\begin{equation}
 \lim_{N\to \infty} (N \log N)^{-\undemi} \sum_{i=1}^n f_N(s_i) \underset{\text{law}}{=}\Nrond(0,\esp{A^2})\,.
\end{equation}
We  infer from this the convergence for finite dimensional distributions
of the process $ (N \log N)^{-\undemi} f_N$ to the Gaussian centered
process $(Y(s))_{s\in[0,D]^d}$ of covariance $r(s,t) = \cov(A_s,A_t)$.

\smallskip

\subsection{Tightness of the min fluctuations for $\kappa>1$: proof of Proposition \ref{tightkappalar1}}\label{tightkappalar11}

We use the continuity modulus of $\frac{1}{N^\kappa} f_N^{\text{m}}$ to prove its tightness, that is we use Theorem \ref{tightness1}.  To that aim, we use \eqref{control diffpsi} and for $\x,\y\in [0,D]^d$ we obtain
\begin{align}\label{boundfistepmodcont}
\nonumber \frac{1}{N^\kappa} |f_N^{\text{m}}(\x)-f_N^{\text{m}}(\y)|&\leq \frac{1}{N^\kappa} \sum_{i=1}^N Z_i^{\alpha-d}\  \Big|\psi\Big(\frac{v_{M_i}(\x)}{Z_i}\Big)- \psi\Big(\frac{v_{M_i}(\y)}{Z_i}\Big)\Big|\\
\nonumber &\leq \frac{C}{N^\kappa} \sum_{i=1}^N Z_i^{\alpha-d-n}|v_{M_i}(\x)-v_{M_i}(\y)|\\
\nonumber &\leq \frac{C}{N^\kappa} \sum_{i=1}^N Z_i^{\alpha-d-n}|\x-\y|\\
&\leq C \frac{T_{N+1}^\kappa}{N^{\kappa} }|\x-\y| \sum_{i=1}^\infty \frac{1}{T_i^\kappa},
\end{align}
where we have used the representation of the order statistics of $(Z_1,\dots,Z_N)$ in \eqref{statordrepalt}. 
At this stage, we set $W:=\sum_{i=1}^\infty  (1/T_i)^\kappa$ that is almost surely finite since $\kappa>1$.  For every $\delta>0$ and $\gep>0$, \eqref{boundfistepmodcont} allows us to state that
$$P\Big(w\Big[\frac{1}{N^\kappa} f_N^\text{m}\Big](\delta)\geq \gep\Big)\leq P\Big( C \big(\tfrac{T_{N+1}}{N}\big)^\kappa W  \geq \frac{\gep}{\delta}\Big). $$
The fact that $T_{N+1}/N$ converges $P$-almost surely towards $1$ suffices to assert that \eqref{tension} holds true. This completes the proof of Proposition \ref{tightkappalar1}
since obviously $f_N^\text{m}(0)=0$ for every $N\in \N$.

\section{Fluctuations of the stellar model: Theorem \ref{fluctconverstel}.}\label{Preuvesstel} 

\subsection{Case $\kappa>1/2$}

For $N\in \N$, we denote by $f_N^{\,\text{stel}}=(f_N^{\,\text{stel},1},f_N^{\, \text{stel},2}, f_N^{\,\text{stel},3})$ the $3$ coordinates of the fluctuations for the stellar model after $N$ iterations of the 
transformations.   We recall Section \ref{vectconv} which guarantees us that obtaining \eqref{premconvevect} 
requires to check that for every $j\in \{1,2,3\}$ 
\begin{itemize}
\item the sequence of random functions $(\frac{1}{N^\kappa} f^{\,\text{stel},j}_N)_{N\in \N}$  is tight in $(C_2,||\cdot||_{\infty})$
\item  for every $\s\in [0,D]^2$ the sequence of real random variables 
$(\frac{1}{N^\kappa} f^{\,\text{stel},j}_N(\s))_{N\in \N}$ converges in probability towards  $\gamma^{\text{stel},j}_1(\s)+\gamma^{\text{stel},j}_{2,\infty}(\s)$.
\end{itemize}
In the case $\kappa>1/2$, proving the tightness  amounts to proving Lemma \eqref{lem:tkol1} after multiplying by $\Theta_i^j$
the quantities $K_j(\s)$ in \eqref{defKi}. This does not bring any additional difficulty since 
the random variables $(\Theta_i^j)_{i\in \N}$ are i.i.d., centered, bounded
and independent of $Z$ and $Y$.  Proving the convergence in probability is achieved by adapting the proof of Lemma \ref{lem:conv23} in the stellar framework. For the same reason, the proof
is completely identical.

 \subsection{Case $\kappa<1/2$, finite dimensional convergence.}

It remains to prove the convergence in finite dimensional distribution. To that aim we consider 
$F_N(\bar \x)=(f_N^{\text{stel}}(\x_1),\dots,f_N^{\text{stel}}(\x_d))$. We also consider 
$\Lambda=(\lambda_1,\dots,\lambda_d)\in (\R^3)^d$. Then
\begin{align}\label{tclmultvardim3}
\Big\langle \Lambda,\frac{F_N(\bar \x)}{\sqrt{N}} \Big\rangle&=\frac{1}{\sqrt{N}}\sum_{j=1}^d \langle \lambda_j,f^{\text{stel}}_N(\x_j)\rangle \\
&=\frac{1}{\sqrt{N}} \sum_{i=1}^N \sum_{j=1}^d X_i(\x_j)\, \langle \lambda_j,\Theta_i\rangle .
\end{align}
At this stage, we observe that $(\sum_{j=1}^d X_i(\x_j)\, \langle\lambda_j,\Theta_i \rangle)_{i\in \N}$ is an i.i.d. 
sequence of random variables that are in $L^2$ and centered.  Thus, we can apply the central limit theorem and conclude that
\begin{equation}\label{convenloigauss}
\lim_{N\to \infty}\Big\langle \Lambda,\frac{F_N(\bar \x)}{\sqrt{N}} \Big \rangle \underset{\text{law}}{=} \mathcal{N}\bigg(0,\text{Var} \Big(\sum_{j=1}^d X_1(\x_j)\, \langle\lambda_j,\Theta_1\rangle \Big)\bigg)
\end{equation}
To compute the variance in the r.h.s. in \eqref{convenloigauss}, we recall that $\Theta_1$ is independent of $X_1$ and that  for every $j\in \{1,\dots,d\}$ the random variable $X_1(\x_j)$ is centered.
Therefore,
\begin{align}
\nonumber \text{Var} \Big(\sum_{j=1}^d X_1(\x_j)\, \langle\lambda_j,\Theta_1\rangle \Big)&=\sum_{1\leq j,k\leq d}  E[X_1(\x_j)\, X_1(\x_k)]  \, E(\langle \lambda_j, \Theta_1\rangle \, \langle \lambda_k, \Theta_1\rangle  )   \\
& = \sum_{1\leq j,k\leq d}  E[X_1(\x_j)\, X_1(\x_k)]  \  \lambda_j E[ \Theta_1^{\intercal} \Theta_1] \ \lambda_k^{\intercal}
\end{align}
and this completes the proof of \eqref{terconvevect}.
\subsection{Case $\kappa < \frac14$ : proof of tightness.}
 Concerning the tightness of each coordinate $(\frac{1}{\sqrt{N}} f_N^{\,\text{stel},j})_{N\in \N}$, we simply repeat the proof 
in Section \ref{proofcasedis2} and nothing changes except for the fact that (for every $i\in \N$) the random variable $X_i(\x)-X_i(\s)$ is replaced by $\Theta_i^j \,[X_i(\x)-X_i(\s)]$ 
which is not a problem since (as mentioned in the previous case) $(\Theta_i^j)_{i\in \N}$ is an i.i.d. sequence of centered and bounded random variables, independent of
$X$.
\subsection{Critical case $\kappa=\undemi$.}
For each coordinate $i$, following the proof of the critical case for
the rand model, see section \ref{subsec:criticalrand}, we establish
finite dimensional convergence of $(\frac{1}{\sqrt{N}}
f_N^{\,\text{stel},i})_{N\in \N}$, by considering the random variables
\begin{equation}
  \xi_i(s) = \Theta_1^i (X_1(s) -X_1(0))\,,
\end{equation}
and the result follows along the same lines.

\section*{Acknowledgments}

This work has been supported by the Agence Nationale de la Recherche (Grant No. ANR-24-CE56-3575).

\appendix
\section{Useful Lemmas}
We recall the definition of function $v$ in \eqref{defaltv}.

\begin{lemma}\label{lem:loivys}
\begin{enumerate}
\item In dimension $1$, let $Y\sim\Urond([0,D])$ be distributed as a uniform on
  $[0,D]$. Then, for all $s$, $v_Y(s) \sim \Urond([0,D/2])$ is
  distributed as a uniform on $[0,D/2]$.
 \item In dimension $2$, let $Y\sim\Urond([0,D]^2)$ be distributed as a uniform on
  $[0,D]^2$. Then, for all $\s$, $v_Y(\s)$ is a continuous random variable with density 
  \begin{equation}\label{densityvdimtwo}
 dP_{v_Y(\s)}(u)=\frac{4}{D^2} \Big[ \ind_{[0,D/2]}(u)+ \ind_{[D/2,D]}(u)  (D-u)\Big] du.
  \end{equation}
  \end{enumerate}
\end{lemma}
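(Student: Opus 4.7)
The strategy is to handle the one-dimensional case directly by a modular reduction, and then to exploit the $1$-norm to reduce the two-dimensional case to the convolution of two independent copies of the one-dimensional law.

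For part (1), the plan is to introduce $Z := (s - Y) \bmod D$. Since $Y \sim \mathcal{U}([0,D])$ and the uniform distribution on $[0,D]$ is invariant under translation modulo $D$, we have $Z \sim \mathcal{U}([0,D])$. Next I would observe that the closest point of $y + \mathbb{Z}D$ to $s$ lies either at $s - Z$ or at $s - Z + D$, so $v_Y(s) = \min(Z, D-Z)$. Then a direct computation gives, for $u \in [0,D/2]$,
\begin{equation*}
  P(v_Y(s) \le u) = P(Z \le u) + P(Z \ge D-u) = \tfrac{2u}{D},
\end{equation*}
which identifies $v_Y(s)$ as uniform on $[0,D/2]$.

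For part (2), the key observation is that the $1$-norm decouples across coordinates, so that for $\mathbf{s} = (s_1,s_2)$ and $\mathbf{y} = (y_1,y_2)$,
\begin{equation*}
  v_{\mathbf{y}}(\mathbf{s}) = \min_{(j_1,j_2) \in \mathbb{Z}^2} \bigl( |s_1 - y_1 + j_1 D| + |s_2 - y_2 + j_2 D| \bigr) = v_{y_1}(s_1) + v_{y_2}(s_2),
\end{equation*}
since the minimisations over $j_1$ and $j_2$ are independent. When $\mathbf{Y} \sim \mathcal{U}([0,D]^2)$, the coordinates $Y_1$ and $Y_2$ are independent uniforms on $[0,D]$, so by part (1) the random variable $v_{\mathbf{Y}}(\mathbf{s})$ is the sum of two i.i.d.\ variables uniform on $[0,D/2]$, each with density $\frac{2}{D}\mathbf{1}_{[0,D/2]}$.

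It then remains to compute the convolution, which gives a triangular density supported on $[0,D]$. Explicitly, for $u \in [0,D/2]$ the convolution equals $\int_0^u \frac{4}{D^2}\,dv = \frac{4u}{D^2}$, and for $u \in [D/2,D]$ it equals $\int_{u-D/2}^{D/2} \frac{4}{D^2}\,dv = \frac{4(D-u)}{D^2}$. There is no genuine obstacle here; the main point is simply recognising that the $1$-norm (rather than Euclidean norm) is what makes the product-lattice minimisation separate coordinate-wise. The statement of the density in the lemma appears to have a typographical omission of the factor $u$ in the first indicator term, but the argument outlined above yields the correct triangular law, which is what is actually used elsewhere in the paper.
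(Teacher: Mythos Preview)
Your proof is correct. For part~(1) your CDF computation via $Z=(s-Y)\bmod D$ and $v_Y(s)=\min(Z,D-Z)$ is equivalent to the paper's approach, which instead integrates against a test function and uses periodicity to obtain $\esp{h(v_Y(s))}=\tfrac{2}{D}\int_0^{D/2}h(v)\,dv$; both rest on the same translation invariance of the uniform law modulo~$D$. For part~(2) the paper in fact gives no argument at all, so your proof supplies what is missing: the key observation that the $1$-norm makes the lattice minimisation separate coordinate-wise, $v_{\mathbf y}(\mathbf s)=v_{y_1}(s_1)+v_{y_2}(s_2)$, reducing the two-dimensional law to the convolution of two independent $\Urond([0,D/2])$ variables. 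Your remark that the stated density in~\eqref{densityvdimtwo} is missing a factor~$u$ in the first term is also correct (as written it does not integrate to~$1$); the triangular density you derive is the right one.
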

\begin{proof}
  Recall that for $x,y \in \R$
  \begin{equation}
    v_x(y) =v_y(x)= d(x-y,D\Z)
  \end{equation}
  is the distance form the point $x-y$ to the set $D \Z$. Let $h:\R\to
  \R$ be measurable non negative, and $Y\sim\Urond([0,D])$. We have:
  \begin{align}
    \esp{h(v_Y(s))} &= \unsur{D} \int_0 ^D h(d(y-s,D\Z))\, dy \\
    &= \unsur{D} \int_{-s}^{D-s} h(d(v,D\Z))\, dv \\
    &= \unsur{D} \int_0 ^D h(d(v,D\Z))\, dv,
  \end{align}
  since the function  $v \to d(v, D \Z)$ is $D$-periodic. Moreover,
  for $v \in (0,D)$:
  \begin{equation}
    d(v, D \Z) = v \un{v\le D/2} + (D-v) \un{v> D/2}\,.
  \end{equation}
  Therefore,
  \begin{align}
    \esp{h(v_Y(s))} &= \unsur{D} \etp{\int_0^{D/2} h(v)\, dv +
      \int_{D/2}^D h(D-v)\, dv} \\
    &= \frac{2}{D} \int_0^{D/2}  h(v)\, dv\,.
  \end{align}
\end{proof}

\begin{lemma}\label{lem:hng}
  If $\psi \in \cH^n$ then there exists a constant $C\in(0,\infty)$,
  and a function $\ee:\R^+ \to \R$ such that $\lim_{u\to 0} \ee(u)=0$ and
  \begin{align}
    \psi(x) -\psi(0) &= \frac{x^n}{n!} \psi^{(n)}(0) (1+\ee(x)) &(x
    \ge 0)  \label{eq:hng1}\\
    \valabs{\psi(x) -\psi(y)} &\le C (x\vee y)^{n-1}
    \valabs{x-y} &(x,y\ge 0)\,.\label{eq:hng2}
  \end{align}
\end{lemma}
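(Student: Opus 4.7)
\textbf{Proof plan for Lemma \ref{lem:hng}.}

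The plan is to prove the two assertions separately. Both rest on the fact that $\psi^{(k)}(0)=0$ for $1\le k\le n-1$: the first identity follows from Taylor's formula with Peano remainder at $0$, while the second comes from iteratively integrating the Lipschitz bound \eqref{eq:psihol} down from $\psi^{(n-1)}$ to $\psi'$.

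For the first identity, I would apply Taylor's formula to order $n$ at $0$: since $\psi$ is $n$ times differentiable at $0$,
\[
\psi(x) = \psi(0) + \sum_{k=1}^{n} \frac{x^k}{k!}\,\psi^{(k)}(0) + o(x^n), \quad x\to 0^+.
\]
The vanishing of $\psi^{(1)}(0),\dots,\psi^{(n-1)}(0)$ collapses this into $\psi(x)-\psi(0) = \frac{x^n}{n!}\,\psi^{(n)}(0) + o(x^n)$. Since $\psi^{(n)}(0)\neq 0$, I can then \emph{define}
\[
\ee(x) := \frac{n!\,(\psi(x)-\psi(0))}{x^n\,\psi^{(n)}(0)} - 1 \quad (x>0), \qquad \ee(0):=0,
\]
which is well-defined on $\R^+$ and satisfies $\lim_{u\to 0}\ee(u)=0$ by the Taylor expansion, so that \eqref{eq:hng1} holds by construction.

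For the second identity, the key step will be to establish by backward induction on $k\in\{1,\dots,n\}$ the pointwise bound
\[
\valabs{\psi^{(n-k)}(t)} \le \frac{C\,t^{k}}{k!}, \qquad t\in[0,1],
\]
with $C$ the Lipschitz constant of $\psi^{(n-1)}$. The base case $k=1$ uses $\psi^{(n-1)}(0)=0$ and \eqref{eq:psihol} directly; the inductive step integrates the bound from $0$ to $t$, using again $\psi^{(n-k)}(0)=0$. In particular with $k=n-1$ I obtain $\valabs{\psi'(t)}\le \frac{C\,t^{n-1}}{(n-1)!}$ for $t\in[0,1]$. Then for $0\le y\le x\le 1$, integration yields $\valabs{\psi(x)-\psi(y)}\le \frac{C}{(n-1)!}\,x^{n-1}(x-y)$. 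To handle the remaining ranges, I observe that $\psi\equiv 0$ on $[1,\infty)$, so the case $x,y\ge 1$ is trivial, and for $x\ge 1\ge y$ one reduces to the previous inequality applied at $(1,y)$ and then invokes $1\le x^{n-1}$ and $1-y\le x-y$. I do not anticipate any real obstacle: the only delicate point is checking that the absent smoothness at $x=1$ does not spoil the case analysis, which it does not since $\psi$ itself is continuous there with $\psi(1)=0$ (by continuity on $[0,1]$ and $\psi\equiv 0$ on $[1,\infty)$).
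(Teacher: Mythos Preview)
Your argument is correct. For \eqref{eq:hng1} you do exactly what the paper does (Taylor with Peano remainder). For \eqref{eq:hng2} you take a genuinely different, more elementary route: the paper writes $\psi(x)-\psi(0)$ via the integral form of the Taylor remainder at order $m=n-1$ and then estimates $\psi(y)-\psi(x)$ by splitting that integral over $[0,x]$ and $[x,y]$, using the Lipschitz property of $\psi^{(m)}$ and $\psi^{(m)}(0)=0$ once; you instead iterate the fundamental theorem of calculus to obtain the pointwise bound $\valabs{\psi'(t)}\le \tfrac{C}{(n-1)!}\,t^{n-1}$ on $[0,1]$ and then integrate $\psi'$ directly. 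Both lead to the same constant up to an innocuous factor. Two small remarks: your induction range should be $k\in\{1,\dots,n-1\}$ rather than $\{1,\dots,n\}$, since the step $k\to k+1$ uses $\psi^{(n-k-1)}(0)=0$, which fails for $k=n-1\to n$; this is harmless because you only invoke $k=n-1$. Also, you should dispose of $n=1$ separately (your induction is vacuous there), which is immediate since then \eqref{eq:hng2} is the Lipschitz hypothesis itself. A genuine advantage of your write-up is that you explicitly treat the ranges $x\ge 1$ and $x\ge 1\ge y$ via $\psi\equiv 0$ on $[1,\infty)$; the paper's proof, working only with the integral representation on $[0,1]$, leaves this case analysis implicit.
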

\begin{proof}
  For $n=1$ there is nothing to prove. Assume $n\ge 2$. Then equation
  \eqref{eq:hng1} is just Taylor's theorem. We then use Taylor's
  formula with an integral form for the remainder to get, for $m=n-1$,
  \begin{equation}
    \psi(x) -\psi(0) = \int_0^x \frac{t^{m-1}}{(m-1)!} \psi^{(m)}
    (x-t)\, dt
  \end{equation}
  Therefore, for $0\le x\le y$, by the Lipshitz property of $\psi^{
    (m)}$ and the fact that $\psi^{(m)}(0)=0$
  \begin{align}
    \valabs{\psi(y) -\psi(x)} &\le \valabs{\int_0^x
      \frac{t^{m-1}}{(m-1)!} (-\psi^{(m)}(x-t)+\psi^{(m)}(y-t)) dt} +\valabs{
      \int_x^y \frac{t^{m-1}}{(m-1)!} \psi^{(m)}(y-t)\,dt}\\
    &\le C \valabs{y-x} \frac{x^m}{m!} + C \int_x^y
    \frac{t^{m-1}}{(m-1)!} (y-t) \, dt \\
    &\le \frac{2 C}{m!}  \valabs{y-x} (x\vee y)^m\,.
  \end{align}
\end{proof}

Let us state a Lemma that we will use several times in the rest of the paper.
\begin{lemma}\label{L2}
For $\alpha>0$ and $\beta>1$ then
$T_i^{-\kappa}\in L^2(\Omega,\cA,P) $ if and only if   $i>2 \kappa$. Moreover, for every  $v>0$
there exists a  $C_v>0$ such that 
\begin{equation}\label{equivgamma}
E\Big[T_i^{-v}\Big]=\frac{C_v}{i^{v}} (1+o(1))\quad \text{ as $i\to \infty$}.
\end{equation}
\end{lemma}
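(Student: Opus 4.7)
The plan is to exploit the fact that $T_i=\xi_1+\cdots+\xi_i$ is a sum of $i$ i.i.d. Exponential$(1)$ random variables and therefore follows a Gamma law with shape parameter $i$ and unit rate, whose density on $[0,\infty)$ is $f_{T_i}(t)=t^{i-1}e^{-t}/\Gamma(i)$. Consequently, for every $u>0$,
$$E\big[T_i^{-u}\big]=\frac{1}{\Gamma(i)}\int_0^\infty t^{i-1-u}\,e^{-t}\,dt.$$
The integrand is integrable at infinity for all $u>0$ thanks to the exponential factor, and is integrable at $0$ if and only if $i-1-u>-1$, that is $i>u$. When this condition holds, the integral equals $\Gamma(i-u)$, so
$$E\big[T_i^{-u}\big]=\frac{\Gamma(i-u)}{\Gamma(i)},\qquad i>u,$$
while $E[T_i^{-u}]=+\infty$ as soon as $i\le u$.

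For the first part of the lemma, I apply this identity with $u=2\kappa$: membership $T_i^{-\kappa}\in L^2(\Omega,\cA,P)$ is equivalent to $E[T_i^{-2\kappa}]<\infty$, which by the dichotomy above happens if and only if $i>2\kappa$.

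For the asymptotic, I take $u=v$ and work in the regime $i>v$, so that $E[T_i^{-v}]=\Gamma(i-v)/\Gamma(i)$. The classical Gamma ratio asymptotic $\Gamma(x+a)\sim x^{a}\,\Gamma(x)$ as $x\to\infty$ (a direct consequence of Stirling's formula), applied with $x=i-v$ and $a=v$, gives
$$\frac{\Gamma(i-v)}{\Gamma(i)}=(i-v)^{-v}(1+o(1))=i^{-v}(1+o(1)),\qquad i\to\infty,$$
which yields the claimed equivalence with constant $C_v=1$. The whole argument is a direct computation using the explicit Gamma density together with a textbook asymptotic for ratios of Gamma functions, so no genuine obstacle is to be expected; the only mild subtlety is keeping track of the threshold $i>u$ that guarantees integrability near the origin.
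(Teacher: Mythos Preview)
Your proof is correct and follows essentially the same route as the paper: both compute $E[T_i^{-u}]=\Gamma(i-u)/\Gamma(i)$ from the Gamma$(i,1)$ density of $T_i$, deduce the $L^2$ threshold $i>2\kappa$ from the integrability condition at the origin, and obtain the asymptotic via Stirling's formula (the paper invokes Stirling directly, you use the equivalent ratio asymptotic $\Gamma(x+a)\sim x^a\Gamma(x)$). You even make explicit that $C_v=1$, which the paper leaves implicit.
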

\begin{proof}
We note that for $v>0$ and $i\in \N$ such that
$i-v>0$ on a 
\begin{align}\label{formexainteti}
E\Big(\frac{1}{T_i^{v}}\Big)&= \frac{\Gamma(i-v)}{\Gamma(i)}\quad  \text{si}\quad i-v>0\\
\nonumber &=\infty \quad \text{otherwise}
\end{align}
with $\Gamma$ the Gamma Euler function, i.e., 
$$\Gamma(j)=\int_0^\infty x^{j-1}\ e^{-x} dx.$$
The proof of \eqref{equivgamma} is a straightforward consequence of the generalized Stirling formula  
which gives us the asymptotics of $\Gamma$ at infinity, ie., $\Gamma(x)=\sqrt{2 \pi x} \big(x/e\big)^x\ (1+o(1))$.
\end{proof}

\section{Kolmogorov criterion of tightness}\label{kolmogcritproof}
In the  present section we prove Proposition \ref{kolmog}. To that aim, in dimension $d\geq 1$, for $\alpha>0$ and $D>0$ we let 
$\cC^\alpha([0,D]^d)$ be the set of $\alpha-$Hölderian functions on $[0,D]^d$. Clearly $\cC^\alpha([0,D]^d) \subset \cC_d$. We endow 
$\cC^\alpha([0,D]^d)$ with the infinite norm $||\cdot||_{\infty}$ and also with the norm $||\cdot ||_{\alpha}$ 
which is defined as 
\begin{equation}\label{alphanorm}
|| f||_{\alpha}:=|f(0)|+\big[f\big]_\alpha:=|f(0)|+\sup_{\x,\s\in [0,D]^d, \x\neq \s}\frac{|f(\x)-f(\s)|}{|\x-\s|_1^\alpha}.
\end{equation}
 We also denote by $\cB_\alpha(0,r)$ the ball of $\cC^\alpha([0,D]^d)$ of radius $r>0$ for the $||\cdot||_\alpha$-norm
and by  $\overline \cB_\alpha(0,r)$ its closure in $(\cC_d,||\cdot||_\infty)$.
Thanks to Arzela-Ascoli Theorem, $\overline \cB_\alpha(0,r)$ is a compact subset of $(\cC_d,||\cdot||_\infty)$.

At this stage we apply \cite[exercice 4.3.17]{stroock} to deduce from the assumption \eqref{tensionkol} that for every $\alpha \in (0,\frac{\gamma}{p})$, there exists 
$\widetilde C>0$ such that 
\begin{equation}\label{boundexpsemnorm}
\sup_{N\in \N}\ E\Big[\big[L_N\big]_{\alpha}\Big]\leq \widetilde C.
\end{equation}
At this stage, we pick an $\alpha \in (0,\frac{\gamma}{p})$. 
Since $(L_N(0))_{N\in \N}$ is tight by assumption, we claim that for every $\gep>0$ there exists 
$M_1>0$ such that for every $N\geq 1$ it comes that $P(|L_N(0)|>M_1)\leq \gep/2$. Moreover \eqref{boundexpsemnorm}
tells us that for $M_2$ large enough 
\begin{equation}\label{boundnormalpha}
\sup_{N\in \N}\, P(\big[L_N\big]_{\alpha}>M_2)\leq \frac{E\Big[\big[L_N\big]_{\alpha}\Big]}{M_2}\leq \frac{\widetilde C}{M_2}\leq \frac{\gep}{2},
\end{equation}
and therefore
\begin{align}\label{alphap}
\sup_{N\in \N}\ P\Big(|| L_N ||_{\alpha}>M_1+M_2\Big)&\leq  P\Big(|L_N(0)|>M_1\Big)+ P\Big(\big[L_N\big]_{\alpha}>M_2\Big)\\
\nonumber &\leq \frac{\gep}{2}+\frac{\gep}{2}=\gep.
\end{align}
We deduce from \eqref{alphap} that
\begin{equation}\label{prooftightlN}
\sup_{N\in \N} P\big(L_N\in \overline \cB_\alpha(0,M_1+M_2)\big)\geq 1-\gep,
\end{equation}
which guarantees us that $(L_N)_{N\in \N}$ is a tight sequence in $(\cC_d,||\cdot||_\infty)$.

\section{Proof of Proposition \ref{convgamma2N}}\label{proof-of-convgamma2N}

We set $\cF_N:=\sigma(\xi_1,\dots,\xi_N,Y_1,\dots,Y_N)$ so that $(\cF_N)_{N\geq n_1}$ is a filtration adapted to the random variable sequence 
$(\gamma_{2,N}(\x))_{N\geq n_1+1}$. Proposition \ref{convgamma2N} will be proven once we show that $(\gamma_{2,N}(\x))_{N\geq n_1+1}$ is a martingale $(\cF_N)_{N\geq n_1}$-adapted and bounded in $L^2$.  The fact that it is a martingale is easily checked, i.e., 
\begin{align}
\nonumber E\big[\gamma_{2,N+1}\,  \mid\, \cF_N\big]
&=E\big[\gamma_{2,N}\, \mid\, \cF_N\big]+E\bigg[ \frac{G_{N+1}(\x)}{T_{N+1}^\kappa}\, \mid\, \cF_N\bigg]\\
&=\gamma_{2,N}+ E[G_{N+1}(\x)] \, \,  E\bigg[ \, T_{N+1}^{-\kappa}\, \mid\, \cF_N\bigg]=\gamma_{2,N},
\end{align}
where we have used the fact that $G_{N+1}(\x)$ only depends on $Y_{N+1}$ and is therefore independent of $\sigma(\cF_N,T_{N+1})$ and is centered 
by a straightforward application of Lemma \ref{lem:loivys}.

Moreover for every $N\geq n_1+1$ we can write 
\begin{align}\label{cauchyasfix}
\nonumber   \Big\|\gamma_{2,N} \Big\|_2^2 &= \bigg\| \sum_{i=n_1+1}^{N} \frac{G_i(\x)}{T_i^\kappa} \bigg\|_2^2=E\bigg[\Big(\sum_{i=n_1+1}^{N} \frac{G_i(\x)}{T_i^\kappa} \Big)^2\bigg]\\
&=\sum_{i=n_1+1}^{N} E\Bigg[ \frac{G_i(\x)^2}{T_i^{2\kappa}}\Bigg]+2 \sum_{n_1+1\leq i<j\leq N} E\Bigg[\frac{G_i(\x) G_j(\x)}{(T_i T_j)^\kappa}\Bigg].
\end{align}
We note that $G_i(\x), G_j(\x)$ and $\frac{1}{(T_i T_j)^{\kappa}}$ are independent and integrable. Moreover, $G_i(\x)$ and $G_j(\x)$ are bounded and centered and therefore the 
second term in the r.h.s. in 
\eqref{cauchyasfix} is zero. Thus, with the help of \eqref{equivgamma} and with the inequality $\kappa>\undemi$ we obtain
\begin{align}\label{conv1}
\nonumber \Big\| \sum_{i=n_1+1}^{N} \frac{G_i(\x)}{T_i^\kappa} \Big \|_2^2&\leq E\Big( G_1(\x)^2\Big)
\sum_{i=n_1+1}^{\infty} E\Bigg[ \frac{1}{T_i^{2\kappa}}\Bigg]\\
&\leq \text{Cste} \sum_{i=n_1+1}^{\infty} \frac{1}{i^{2\kappa}} <\infty.
\end{align}
Thus, \eqref{conv1} guarantees us that $(\gamma_{2,N}(\x))_{N>n_1+1}$ is a martingale  bounded in $L^2(\Omega,\cA,P)$, which proves the almost sure convergence towards a random variable $\gamma_{2,\infty}\in L^2(\Omega,\cA,P)$. The convergence also holds true in $L^2$.
$\Box$

\section{A specific example of variables in the domain of attraction
  of Normal law}
\begin{lemma}\label{lem:att_par_normal}
  Let $\xi$ be a random variable such that
  \begin{equation}
    \prob{\pm\xi >  x} \sim_{x\to +\infty} c_\pm x^{-2}.
  \end{equation}
  Then if $\mu=\esp{\xi}$, $\xi,\xi_1,...,\xi_n$ are i.i.d., the partial
  sum $S_n = \xi_1+ \cdots +\xi_n$ is in the domain of attraction of a
  normal law. More precisely
  \begin{equation}
    \unsur{\sqrt{n\log n}} (\xi_1 + \cdots + \xi_n - n \mu) \cvloin
      \Nrond(0,c_+ + c_-)\,.
  \end{equation}
\end{lemma}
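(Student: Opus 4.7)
The plan is to use the classical truncation method together with the Lindeberg-Feller central limit theorem for triangular arrays. Replacing $\xi$ by $\xi - \mu$, I may assume without loss of generality that $\mu = \esp{\xi} = 0$, since an additive shift does not affect the tail asymptotics. Set $a_n := \sqrt{n \log n}$ and decompose each variable as $\xi_i = \xi_i^b + \xi_i^t$ with $\xi_i^b := \xi_i \un{|\xi_i| \le a_n}$ and $\xi_i^t := \xi_i \un{|\xi_i| > a_n}$.

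First, I would show that the tail part is negligible. A union bound gives $\prob{\exists i \le n : \xi_i^t \neq 0} \le n \prob{|\xi| > a_n} = O(n/a_n^2) = O(1/\log n)$, so with probability tending to one all the $\xi_i^t$ vanish, and hence $a_n^{-1}\sum_{i=1}^n \xi_i^t$ converges to $0$ in probability.

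The heart of the argument is the asymptotic identity obtained by integration by parts:
\begin{equation*}
\esp{\xi^2 \un{|\xi| \le t}} = -t^2\, \prob{|\xi|>t} + 2\int_0^t x\, \prob{|\xi|>x}\, dx \underset{t\to\infty}{\sim} 2(c_+ + c_-) \log t.
\end{equation*}
Since $a_n^2 = n\log n$ and $\log a_n \sim \tfrac12 \log n$, this yields $n\, \esp{(\xi^b)^2}/a_n^2 \to c_+ + c_-$. The Lindeberg condition follows from the same identity: for every $\varepsilon > 0$, the difference $\esp{\xi^2 \un{|\xi|\le a_n}} - \esp{\xi^2 \un{|\xi|\le \varepsilon a_n}}$ is of order $\log(1/\varepsilon)$, hence bounded, so that $n\, \esp{\xi^2 \un{\varepsilon a_n < |\xi| \le a_n}}/a_n^2 = O(1/\log n) \to 0$. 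For the centering, a similar integration by parts gives $\esp{\xi \un{\xi > t}} \sim 2c_+/t$ and $\esp{-\xi \un{\xi<-t}} \sim 2c_-/t$, so $|\esp{\xi^b}| = |\esp{\xi \un{|\xi|>a_n}}| = O(1/a_n)$ and thus $n|\esp{\xi^b}|/a_n = O(1/\log n) \to 0$, which shows that the truncation bias is negligible. Applying Lindeberg-Feller to the centered truncated array then yields $a_n^{-1} \sum_{i=1}^n \xi_i^b \cvloin \Nrond(0, c_+ + c_-)$, and combining with the tail estimate gives the stated convergence.

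The main technical point, rather than a true obstacle, is the borderline nature of the tail $c_\pm x^{-2}$: the variance is infinite but only logarithmically so, which forces the extra factor $\sqrt{\log n}$ in the normalization and makes the truncated second moment grow like $\log t$. The factor $2$ produced by the integration by parts combines with the factor $\tfrac12$ coming from $\log a_n \sim \tfrac12 \log n$ to give exactly $c_+ + c_-$ in the limiting variance. Once this asymptotic is established, the remainder of the proof is a routine verification of the Lindeberg condition.
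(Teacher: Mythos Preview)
Your proof is correct and proceeds along a genuinely different route than the paper's. Both arguments hinge on the same key computation, namely that the truncated second moment satisfies $\esp{\xi^2\un{|\xi|\le t}}\sim 2(c_++c_-)\log t$. From there, however, the paper invokes the classical domain-of-attraction criterion (\citet[Theorem~8.3.1]{BGT87}): it observes that the truncated variance is slowly varying, checks that $a_n=\sqrt{n\log n}$ satisfies $na_n^{-2}V(a_n)\to c_++c_-$, and then identifies the limiting variance constant by a scaling argument together with an explicit example ($\xi=\epsilon\,U^{-1/2}$). Your approach is more self-contained: you truncate at level $a_n$, dispose of the tail by a union bound, control the truncation bias via $\esp{\xi\un{|\xi|>t}}=O(1/t)$, and apply Lindeberg--Feller directly to the truncated array. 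This avoids the external reference and the auxiliary example, and the variance constant $c_++c_-$ falls out immediately from the Lindeberg--Feller conclusion rather than requiring a separate identification step. The paper's version is shorter if one is willing to cite BGT, while yours is elementary and exhibits the mechanism more transparently.
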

\begin{proof}
  This Lemma is just a straightforward consequence of \citet[Theorem 8.3.1]{BGT87}
  as we can derive the asymptotics of the truncated variance
  
  \begin{align*}
    V(x) &= \esp{\xi^2 \un{\valabs{\xi} \le x}} \\
    &= \intof \prob{\xi^2> t,\valabs{\xi} \le x}\, dt\\
    &= \int_0^{x^2} \prob{ \sqrt{t}<
      \xi\leq x }\, dt + \int_0^{x^2} \prob{\sqrt{t}<-\xi\leq x}\, dt\\
    &= 2 \int_0^x \prob{u< \xi \le x} u \, du +  2 \int_0^x \prob{u< -\xi \le x} u \, du \,.
  \end{align*}
Given $\ee>0$, there exists $t_0$ such that for $t\ge t_0$, we have
\begin{equation}
  c_+(1-\ee) t^{-2} \le \prob{\xi >t} \le  c_+(1+\ee) t^{-2}
\end{equation}
Therefore, if $x > t_0$, then
\begin{align*}
  2 \int_{t_0}^x \prob{u< \xi \le x} u \, du &=  2 \int_{t_0}^x
  (\prob{\xi>u}-\prob{\xi > x}) u \, du\\
  &\le 2 c_+  (1+\ee)\int_{t_0}^x \frac{du}{u}
  - 2 c_+ x^{-2}  (1-\ee)\int_{t_0}^x u \,du \\
  &=  2 c_+  (1+\ee) \log(\frac{x}{t_0})  -  c_+   (1-\ee) \frac{x^2-t_0^2}{x^2}\,.
\end{align*}
Hence,
\begin{equation}
  \limsup_{x \to +\infty} \unsur{\log x} 2 \int_0^x \prob{u< \xi
    \le x} u \, du \le 2 c_+ (1+\ee)\,,
\end{equation}
and letting $\ee\to 0$ yields
\begin{equation}
  \limsup_{x \to +\infty} \unsur{\log x} 2 \int_0^x \prob{u< \xi
    \le x} u \, du \le 2 c_+\,.
\end{equation}
It is easy to prove similarly that
\begin{equation}
   \liminf_{x \to +\infty} \unsur{\log x} 2 \int_0^x \prob{u< \xi
    \le x} u \, du \ge 2 c_+\,.
\end{equation}
Eventually, we establish that
\begin{equation}
  \lim_{x\to +\infty} \unsur{\log x} V(x) = 2(c_+ + c_-)
\end{equation}
and therefore $V(x)$ is slowly varying.
  
  It is now easy to see that the norming sequence $a_n = \sqrt{n \log
    n}$ satisfies \citet[(8.3.7)]{BGT87}:
  \begin{align*}
\lim_{n\to +\infty} \frac{n}{a_n^2} V(a_n) = c_+ + c_-\,.
\end{align*}
Since $\esp{\xi}=\mu$ has first moment we can choose $b_n = n\mu$ to
satisfy
\begin{align*}
\lim_{n\to \infty} a_n (S_n -b_n) \underset{\text{law}}{=}\, \Nrond(0,\sigma^2(c_+ + c_-))\,.
\end{align*}
Transforming $\xi $ into $a \xi$ for $a>0$, we see that the function
$\sigma^2(c)$ is linear $\sigma^2(c) = c \sigma^2_1$. In the case $\xi
= \epsilon U^{-1/2}$ with $\prob{\xi = \pm 1}=\undemi$ and $U$ uniform
on $(0,1)$, we have $c_\pm=\undemi$ and  can compute exactly
$\sigma^2_1=1$, by using a Taylor expansion of the caracteristic
function of $\xi$ near $0$.
\end{proof}
%
%
%
\bibliography{references}
\bibliographystyle{plainnat}

 \end{document}